\declaretheorem[name=Theorem,numberwithin=section]{theorem}
\declaretheorem[name=Proposition,sibling=theorem]{proposition}
\declaretheorem[name=Lemma,sibling=theorem]{lemma}
\declaretheorem[name=Corollary,sibling=theorem]{corollary}
\declaretheorem[name=Example,numberwithin=section]{example}
\declaretheorem[name=Remark,sibling=example]{remark}
\newcommand{\lra}{\leftrightarrows}
\newcommand{\ra}{\rightarrow}
\newcommand{\da}{{\downarrow}}
\newcommand{\up}{{\uparrow}}
\newcommand{\mb}[1]{\mbox{#1}}
\newcommand{\ca}[1]{\mathcal{#1}}
\newcommand{\bd}[1]{\mathbf{#1}}
\newcommand{\mf}{\mathsf}
\newcommand{\mi}{\mathit}
\newcommand{\se}{\subseteq}
\newcommand{\sm}{\setminus}
\newcommand{\we}{\wedge}
\newcommand{\ve}{\vee}
\newcommand{\bwe}{\bigwedge}
\newcommand{\bve}{\bigvee}
\newcommand{\bca}{\bigcap}
\newcommand{\bcu}{\bigcup}
\newcommand{\opp}[1]{#1^{op}}
\newcommand{\Sc}{\mf{S}_{\cl}}
\newcommand{\So}{\mf{S}_{\op}}
\newcommand{\Ss}{\mf{S}}
\newcommand{\Se}{\mf{S}_{\ca{E}}}
\newcommand{\op}{\mathfrak{o}}
\newcommand{\cl}{\mathfrak{c}}
\newcommand{\bl}{\mathfrak{b}}
\newcommand{\rjc}{\mathfrak{rjc}}
\newcommand{\rb}{\mathfrak{rb}}
\newcommand{\rf}{\mathfrak{rf}}
\newcommand{\Fi}{\mathsf{Filt}}
\newcommand{\fso}{\Fi_{\mathcal{SO}}}
\newcommand{\fcp}{\Fi_{\mathcal{CP}}}
\newcommand{\fr}{\Fi_{\mathcal{R}}}
\newcommand{\fe}{\Fi_{\mathcal{E}}}
\newcommand{\fse}{\Fi_{\mathcal{SE}}}
\newcommand{\Om}{\Omega}
\newcommand{\pt}{\mathsf{pt}}
\newcommand{\rpt}{\mathsf{pt}_R}
\newcommand{\maxpt}{\mathsf{maxpt}}
\newcommand{\sll}{\mf{S}(L)}
\newcommand{\scl}{\mf{S}_{\cl}(L)}
\newcommand{\clo}[1]{\mb{cl}_{#1}}
\title{Raney extensions of frames: topological aspects}
\author{Anna Laura Suarez \thanks{Department of Mathematics Tullio Levi-Civita, University of Padova, 35121, Italy. email: \href{mailto:annalaurasuarez993@gmail.com}{annalaurasuarez993@gmail.com}}}
\date{}
\date{}
\begin{document}

\maketitle

\begin{abstract}

   We explore a pointfree approach to spaces which extends the category of $T_0$ spaces. We build on the work in \cite{suarez24}, and consider the category $\bd{Raney}$ of \emph{Raney extensions}, pairs $(L,C)$ where $C$ is a coframe, $L\se C$ is a frame which meet-generates it, and the inclusion $L\se C$ preserves the frame operations as well as the strongly exact meets. We show that there is a dual adjunction $\Om_R:\bd{Top}\lra \opp{\bd{Raney}}:\pt_R$, whose fixpoints are all the $T_0$ spaces. The functor $\Om_R$ maps a space $X$ to the pair $(\Om(X),\ca{U}(X))$, where $\Om(X)$ are its opens and $\ca{U}(X)$ its saturated sets. We show that for a frame $L$ the spectra of the largest and the smallest Raney extensions over it are, respectively, the classical spectrum $\pt(L)$ and the $T_D$ spectrum $\pt_D(L)$.

    We study separation axioms in this setting. We define a Raney extension $(L,C)$ to be $T_1$ if $C$ is Boolean, motivated by the fact a space $X$ is $T_1$ if and only if $\ca{U}(X)=\ca{P}(X)$. We show that a frame is subfit if and only if it admits a $T_1$ Raney extension. We show that a subfit frame is scattered if and only if it admits a unique Raney extension. Raney extensions admit variations of the \emph{density} and \emph{compactness} properties of a canonical extension. We characterize sobriety of a space $X$, as well as the $T_D$ and $T_1$ properties, purely algebraically in terms of density or compactness of $(\Om(X),\ca{U}(X))$. We use this to define sobriety for general Raney extensions, as well as the $T_D$ property. We show that all Raney extensions admit a sober coreflection, and that in the category of Raney extensions with \emph{exact} maps, they also admit a $T_D$ reflection.
    
   We explore the close connection between exactness and the $T_D$ axiom. We show that the dual adjunction between frames and spaces restricts to a dual adjunction between the category of $T_D$ spaces and the category of $\bd{Frm}_{\ca{E}}$ of frames and exact maps. We introduce the notion of \emph{exact} sublocale, a sublocale whose surjection is exact. We use the results on exactness to show that exact sublocales $\Se(L)$ form a subcolocale of $\Ss(L)$. 
    
\end{abstract}

\tableofcontents

\section{Introduction}

In this work, we introduce an extension of the classical dual adjunction between frames and spaces at the core of pointfree topology, which captures all $T_0$ spaces. A duality for $T_0$ spaces already exists, it is \emph{Raney duality}, as illustrated in \cite{bezhanishvili20}. In Raney duality, rather than mapping a space $X$ to the frame $\Om(X)$ of its open sets, we map it to the embedding $\Om(X)\se \ca{U}(X)$ of its open sets into the lattice saturated\footnote{\emph{Saturated sets} are intersections of open sets.} sets. The limitation of Raney duality is that, on the algebraic side, our objects are all of the form $(\Om(X),\ca{U}(X))$ for some space $X$, and this means that this category does not generalize $T_0$ spaces in the way that frames generalize sober spaces. In order to gain a more pointfree perspective, we consider as objects of our category pairs $(L,C)$ where $C$ is a coframe and $L\se C$ is a frame which meet-generates $C$ and such that the embedding preserves the frame operations together with strongly exact meets\footnote{\emph{Strongly exact meets} are the pointfree version of those intersections of open sets which are open. Because a meet of a collection $\{U_i:i\in I\}$ of opens in general is calculated as the interior of $\bca_i U_i$, these are exactly the meets that are preserved by the embedding $\Om(X)\se \ca{U}(X)$.}. These objects are called \emph{Raney extensions}, and they form a category called $\bd{Raney}$ whose morphisms are coframe maps which restrict to frame maps between the generating frames. Raney extensions were introduced in \cite{suarez24}, where the more algebraic aspects of these structures were studied. In this paper, we build on that work, and analyze instead the dual adjunction $\Om_R:\bd{Top}\lra\bd{Raney}^{op}:\rpt$ between Raney extensions and spaces, exploring the more topological meaning of Raney extensions. Raney extensions prove to be a useful tool to both describing spaces pointfreely and tackle questions in classical pointfree topology.

\medskip

\medskip

For Raney extensions, we have drawn from the theory of canonical extensions of distributive lattices, and in fact in \cite{suarez24} Raney extensions are introduced as a generalization of this construction. For Raney extensions, too, we have versions of the density and compactness properties of the canonical extension. We will characterize sober, $T_D$, and $T_1$ spaces in terms of density and compactness of their Raney extensions, thus giving a purely algebraic characterization of these axioms. 
We use these facts to define the notion of \emph{sober} and of $T_D$ Raney extension. We show that any Raney extension admits a sober coreflection, and that under certain conditions Raney extensions admit $T_D$ and $T_1$ reflections. 

\medskip

The study of separation axioms in pointfree topology has been quite active, and recently the results on the matter have been published in a book, see \cite{picado21}. In pointfree topology, the $T_1$ axiom has several different translations; weakest is \emph{subfitness}, see for example Chapter V of \cite{picadopultr2012frames} or Chapter II of \cite{picado21}. In point-set topology, $T_1$ spaces are characterized by all their subspaces being saturated, and this means that, for a space $X$, being $T_1$ amounts to the embedding of the opens into the saturated sets is $\Om(X)\se \ca{P}(X)$. This leads us to defining a Raney extension $(L,C)$ to be $T_1$ if and only if $C$ is Boolean. We prove that a frame admits a $T_1$ Raney extension if and only if it is subfit, giving a precise sense in which subfitness is the weakest possible frame version of the $T_1$ axiom. 

\medskip

Another important separation axiom in pointfree topology is the $T_D$ axiom. This was first introduced in \cite{Aull62}, and it is stronger than $T_0$ and weaker than $T_1$. For the importance of this axiom in pointfree topology, see \cite{banaschewskitd}. The axiom $T_D$ is a mirror image of sobriety in the following sense:
\begin{itemize}
    \item A space $X$ is sober if and only if there can be no nontrivial subspace inclusion $i:X\se Y$ such that $\Om(i)$ is an isomorphism;
    \item A space $X$ is $T_D$ if and only if there can be no nontrivial subspace inclusion $i:Y\se X$ such that $\Om(i)$ is an isomorphism.
\end{itemize}

In \cite{banaschewskitd}, the $\pt_D(L)$ spectrum of a frame is introduced, an alternative to the classical spectrum which is always a $T_D$ space. We prove that, for a frame $L$, the classical sober spectrum $\pt(L)$ is the spectrum of the largest Raney extension on $L$, whereas the $T_D$ spectrum $\pt_D(L)$ is the spectrum of the smallest one. Furthermore, for every Raney extension $(L,C)$ for its spectrum $\pt_R(L,C)$ we always have subspace inclusions 
\[
\pt_D(L)\se \pt_R(L,C)\se \pt(L).
\]

\medskip

Another axiom which has been studied quite extensively is \emph{scatteredness}. Scatteredness for a frame $L$ is defined in \cite{plewe00} and \cite{plewe02} as the property that $\mf{S}(L)$ is Boolean. In \cite{ball16}, the authors characterize the frames for which $\mf{S}_{\cl}(L)=\mf{S}(L)$ as those subfit frames such that they are scattered. Here, we show that subfit frames which are scattered coincide with those subfit frames with unique Raney extensions. 

\medskip

Another prominent structure in pointfree topology is the collection $\mf{S}_{\cl}(L)$ of joins of closed sublocales. See \cite{ball18}, \cite{joinsofclosed19}, \cite{moshier20}, \cite{ball19}, \cite{ball16}. In \cite{suarez24}, we have shown that a frame map $f:L\to M$ lifts to a map $\Sc(L)\to \Sc(M)$ if and only if it is \emph{exact}. We introduce the notion of \emph{exact} sublocale, a sublocale such that the corresponding frame surjection is exact. It turns out that for a frame $L$, the ordered collection $\Se(L)$ of exact sublocales form a subcolocale of $\sll$, the collection of all sublocales of $L$. We also show how exactness relates to the $T_D$ axiom, and show that the classical adjunction between spaces and frames restricts to an adjunction between $T_D$ spaces and frames with exact maps.

\section{Background}

\subsection{Frames and spaces}
A \emph{frame} is a complete lattice $L$ satisfying the distributivity law $a\wedge \bigvee B = \bve \{a\wedge b:b\in B\}$ for all $a\in L$ and $B\subseteq L$. Frames form a category $\bd{Frm}$, whose morphisms are functions preserving arbitrary joins (including the bottom element $0$) and finite meets (including the top element $1$). Morphisms of frames preserve all joins, and as such they have right adjoints. For a frame map $f:L\ra M$, we will denote as $f_*$ its right adjoint. Similarly, \emph{coframes} are the complete lattices where the dual distributive law holds, and morphisms between them are the maps preserving arbitrary meets and finite joins. Any coframe morphism $f:C\to D$ has a left adjoint, which we denote as $f^*$. 

\medskip 

Given a topological space $X$, its lattice of open sets $\Om(X)$ is always a frame, and this assignment is the object part of a functor $\Omega:{\bd{Top}\ra \bd{Frm}^{op}}$ assigning to each space its frame of opens. The correspondence between frames and topological spaces at the core of pointfree topology is an adjunction $\Om:\bd{Top}\lra \opp{\bd{Frm}}:\pt$ with $\Om\dashv \pt$. There is more than one equivalent way in the literature of defining the spectrum of a frame. Here, we define $\pt(L)$ as the set of all completely prime filters\footnote{A filter $F\se L$ is \emph{completely prime} if $\bve_i x_i\in F$ implies $x_i\in F$ for some $i\in I$. Completely prime filters are the completely prime elements of $\Fi(L)$.} of a frame $L$, topologized by defining the open sets as those of the form $\varphi_L(a)=\{P\in \pt(L):a\in P\}$. A frame $L$ is \emph{spatial} when for $a,b\in L$ such that $a\nleq b$ there is some completely prime filter containing $a$ and omitting $b$. A space $X$ is \emph{sober} when every irreducible closed set is the closure $\overline{\{x\}}$ of a unique point $x$.

\begin{theorem}\label{classicalsoberspatial}
    There is an adjunction $\Om:\bd{Top}\lra \opp{\bd{Frm}}:\pt$ with $\Om\dashv \pt$. This adjunction is idempotent, and it maximally restricts to a dual equivalence between sober spaces and spatial frames.
\end{theorem}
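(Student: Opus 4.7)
The plan is standard: build the adjunction by exhibiting a unit and counit, verify the triangle identities, then read off idempotence and the maximal restriction by identifying the objects on which the (co)unit is invertible.

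First I would define the unit $\eta_X : X \to \pt(\Om(X))$ by $\eta_X(x) = \{U \in \Om(X) : x \in U\}$, which is a completely prime filter because membership of the point $x$ commutes with finite intersections and arbitrary unions of opens; continuity is immediate from the identity $\eta_X^{-1}(\varphi_{\Om(X)}(U)) = U$. The counit, viewed as a frame map $\varepsilon_L = \varphi_L : L \to \Om(\pt(L))$, sends $a$ to $\{P \in \pt(L) : a \in P\}$, and it is a frame map because every element of $\pt(L)$ is by definition a filter closed under finite meets and reflecting arbitrary joins. Naturality of both transformations is a routine diagram chase.

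Next, the two triangle identities are tautological unwrappings: on the space side, $\Om(\eta_X)(\varphi_{\Om(X)}(U)) = \eta_X^{-1}(\varphi_{\Om(X)}(U)) = U$; on the frame side, $\pt(\varepsilon_L)(\eta_{\pt(L)}(P)) = \{a \in L : P \in \varphi_L(a)\} = P$. This establishes the adjunction $\Om \dashv \pt$.

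For idempotence and the maximal restriction to sober spaces and spatial frames, I would verify the two key lemmas that for every space $X$ the frame $\Om(X)$ is spatial and for every frame $L$ the space $\pt(L)$ is sober. Spatiality of $\Om(X)$ reduces to injectivity of $\varphi_{\Om(X)}$: if $U \not\subseteq V$, any $x \in U \setminus V$ gives $\eta_X(x) \in \varphi_{\Om(X)}(U) \setminus \varphi_{\Om(X)}(V)$, and surjectivity of $\varphi_{\Om(X)}$ onto the opens of $\pt(\Om(X))$ holds by the very definition of the topology, so $\varepsilon_{\Om(X)}$ is an iso. Sobriety of $\pt(L)$ is the statement that $\eta_{\pt(L)}$ is a homeomorphism, which I would prove by exhibiting its inverse: a completely prime filter $F$ of $\Om(\pt(L))$ is sent to $\{a \in L : \varphi_L(a) \in F\} \in \pt(L)$. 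This last step is the one that needs care, as it rests on the standard bijection between irreducible closed sets and completely prime filters and is the main obstacle in the argument. Once these two facts are in hand, the maximal restriction follows from the general principle that an adjunction restricts to an equivalence on exactly those objects where the (co)unit is invertible, together with the observation that $\eta_X$ is a homeomorphism iff every completely prime filter of $\Om(X)$ is the neighbourhood filter of a unique point (sobriety) and $\varphi_L$ is a frame isomorphism iff it is injective (spatiality).
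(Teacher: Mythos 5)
Your proof is correct and follows the standard textbook argument; the paper states Theorem \ref{classicalsoberspatial} as classical background without giving a proof, so there is no alternative route to compare against. The only step worth flagging is that identifying sobriety of $\pt(L)$ with $\eta_{\pt(L)}$ being a homeomorphism implicitly uses that $\pt(L)$ is $T_0$ (immediate, since distinct completely prime filters are separated by some $\varphi_L(a)$) and that the inverse $F\mapsto \varphi_L^{-1}(F)$ is well defined and continuous, which you rightly single out as the part needing care.
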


\subsection{Sublocales}

Sublocales are the pointfree counterparts of subspaces. Because subspace inclusions in the category $\bd{Top}$ of topological spaces are the regular monomorphisms, sublocales are defined as the regular monomorphisms in the category $\bd{Loc}$ of locales. Even though we will work in the category of frames, we follow Picado and Pultr in \cite{picadopultr2012frames} in defining a \emph{sublocale} of a frame $L$ to be a subset $S\se L$ such that: 
\begin{enumerate}
    \item It is closed under all meets;
    \item Whenever $s\in S$ and $x\in L$ we have $x\ra s\in S$.
\end{enumerate}

These requirements are equivalent to stating that $S\se L$ is a regular monomorphism in the category $\bd{Loc}$ of locales. The family $\mathsf{S}(L)$ of all sublocales of $L$ ordered by inclusion is a coframe. Meets in $\mf{S}(L)$ are set-theoretical intersections. This means that they form a closure system on the subsets of $L$. For a subset $X\se L$, we denote as $\ca{S}(X)$ the smallest sublocale containing $X$. In the following, $\ca{M}(-)$ denotes closure under meets.
\begin{lemma}\label{l:small-sl}
    For a frame $L$ and for $X\se L$, we have $\ca{S}(X)=\ca{M}(\{a\ra x:a\in L,x\in X\})$.
\end{lemma}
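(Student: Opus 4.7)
The plan is to prove a double inclusion between $\mathcal{S}(X)$ and the set $M:=\mathcal{M}(\{a\to x:a\in L,\,x\in X\})$ by checking first that $M$ is itself a sublocale containing $X$, and then observing that every sublocale containing $X$ must contain $M$.

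First, I would verify that $M$ is a sublocale. Closure under meets is immediate from the definition of $\mathcal{M}(-)$, so only condition (2) requires work: given $s\in M$ and $y\in L$, I must show $y\to s\in M$. Since $s$ is a meet $s=\bigwedge_i (a_i\to x_i)$ of elements of the generating family, the identity $y\to \bigwedge_i t_i=\bigwedge_i (y\to t_i)$ (valid in any frame, since $y\to(-)$ is a right adjoint and preserves meets) reduces the problem to showing $y\to(a\to x)\in M$ for each generator. This follows from the standard Heyting identity $y\to(a\to x)=(y\wedge a)\to x$, whose right-hand side is again a generator, hence lies in $M$.

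Second, $M$ contains $X$: for every $x\in X$ the equality $x=1\to x$ exhibits $x$ as a generator of $M$, so $X\subseteq M$. Together with the previous step this gives $\mathcal{S}(X)\subseteq M$, since $\mathcal{S}(X)$ is by definition the smallest sublocale containing $X$.

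For the reverse inclusion, let $S$ be any sublocale containing $X$; I claim $M\subseteq S$. By condition (2) of the sublocale definition, $a\to x\in S$ for every $a\in L$ and $x\in X\subseteq S$, so $S$ contains the generating family $\{a\to x:a\in L,\,x\in X\}$. By condition (1), $S$ is closed under arbitrary meets, so $S$ contains $M=\mathcal{M}(\{a\to x:a\in L,\,x\in X\})$. Applying this with $S=\mathcal{S}(X)$ yields $M\subseteq \mathcal{S}(X)$ and completes the proof.

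The argument is essentially routine; the only step that needs any care is the verification that $M$ is closed under the Heyting operation $y\to(-)$, which hinges on the two identities $y\to\bigwedge_i t_i=\bigwedge_i(y\to t_i)$ and $y\to(a\to x)=(y\wedge a)\to x$. These are exactly the computations that make the described set of generators closed under the sublocale operations.
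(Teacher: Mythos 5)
Your proof is correct: the two Heyting identities $y\ra\bwe_i t_i=\bwe_i(y\ra t_i)$ and $y\ra(a\ra x)=(y\we a)\ra x$ do exactly the work needed to show the meet-closure of the generators is already a sublocale, and the reverse inclusion is immediate from the sublocale axioms. The paper states this lemma as background without proof (it is a standard fact from the sublocale literature), and your argument is precisely the standard one.
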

The top element is $L$ and the bottom element is $\{1\}$. Because $\sll$ is a coframe, there is a \emph{difference} operator on it, dual to Heyting implication, defined for sublocales $S$ and $T$ as $S{\sm}T=\bca \{U\in \sll:S\se T\cup U\}$. For a sublocale $S$, we denote the element $L{\sm}S$ as $S^*$, and we call it the \emph{supplement} of $S$. For each $a\in L$, there are an \emph{open sublocale} and a \emph{closed sublocale} associated with it. These are, respectively, $\mathfrak{o}(a)=\{a\ra b: b\in L\}$ and $\mathfrak{c}(a)=\uparrow a$. We will need a few facts about open and closed sublocales, which we gather here.
\begin{proposition}\label{manyfacts}
For every frame $L$ and $a,b,a_i\in L$ we have
\begin{enumerate}
    \item $\op(1)=L$ and $\op(0)=\{1\}$;
    \item $\cl(1)=\{1\}$ and $\cl(0)=L$;
    \item $\bve_i \op(a_i)=\op(\bve_i a_i)$ and $\op(a)\cap \op(b)=\op(a\we b)$;
    \item $\bca_i \cl(a_i)=\cl(\bwe_i a_i)$ and $\cl(a)\ve \cl(b)=\cl(a\we b)$;
    \item The elements $\op(a)$ and $\cl(a)$ are complements of each other in $\mf{S}(L)$: we have $\op(a)\cap \cl(a)=$ and $\op(a)\ve \cl(a)=L$;
    \item $\cl(a)\se \op(b)$ if and only if $a\ve b=1$, and $\op(a)\se \cl(b)$ if and only if $a\we b=0$.
\end{enumerate}
\end{proposition}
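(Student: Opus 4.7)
The plan is to verify each item by direct argument from the definitions $\op(a)=\{a\ra b:b\in L\}$ and $\cl(a)={\uparrow}a$, together with standard Heyting algebra identities. Items (1) and (2) are immediate from $1\ra b=b$ and $0\ra b=1$.

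For item (4) the intersection formula reduces to the set-theoretic fact $\bca_i{\uparrow}a_i={\uparrow}\bve_i a_i$, yielding $\bca_i\cl(a_i)=\cl(\bve_i a_i)$ (this is what I would prove; the formula as displayed, with $\cl(\bwe_i a_i)$, appears to be a misprint). For item (3), the identity $\op(a)\cap\op(b)=\op(a\we b)$ follows from the Heyting law $(a\we b)\ra x=a\ra(b\ra x)$. The join halves of (3) and (4) are more delicate, and the key preliminary observation is that for any sublocales $S,T$, since every sublocale is already closed under $a\ra(-)$, the union $S\cup T$ is closed under Heyting implication; hence by Lemma \ref{l:small-sl} the join simplifies to $S\ve T=\ca{M}(S\cup T)$. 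Granted this, $\cl(a)\ve\cl(b)=\cl(a\we b)$ follows because any $x\geq a\we b$ decomposes by distributivity as $x=(x\ve a)\we(x\ve b)$, with $x\ve a\in\cl(a)$ and $x\ve b\in\cl(b)$. For $\bve_i\op(a_i)=\op(\bve_i a_i)$, I would first note that $\op$ is monotone (using the equivalent description $\op(a)=\{x:a\ra x=x\}$), so each $\op(a_i)\se\op(\bve_i a_i)$; the reverse inclusion follows from the identity $(\bve_i a_i)\ra x=\bwe_i(a_i\ra x)$, which writes each element of $\op(\bve_i a_i)$ as a meet of elements of $\bcu_i\op(a_i)$.

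For item (5), the intersection $\op(a)\cap\cl(a)=\{1\}$ is elementary: if $x\geq a$ and $a\ra x=x$, then $a\leq x$ forces $a\ra x=1$, so $x=1$. The join $\op(a)\ve\cl(a)=L$ follows from the universal identity $x=(a\ra x)\we(a\ve x)$, valid in any frame by distributivity together with $(a\ra x)\we a\leq x$; this exhibits every $x\in L$ as a binary meet of an element of $\op(a)$ and one of $\cl(a)$.

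Item (6) is then deduced from (5) by complementarity together with (3) and (4): $\cl(a)\se\op(b)$ iff $\cl(a)\cap\cl(b)\se\op(b)\cap\cl(b)=\{1\}$ iff $\cl(a\ve b)=\{1\}$ iff $a\ve b=1$; dually, $\op(a)\se\cl(b)$ iff $\op(a\we b)=\{1\}$ iff $a\we b=0$. The main technical obstacle throughout is that joins in $\mf{S}(L)$ are not set-theoretic unions; the identification $S\ve T=\ca{M}(S\cup T)$ dissolves this into short Heyting-algebra computations.
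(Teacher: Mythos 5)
The paper states this proposition as background and gives no proof (it is a collection of standard facts from Picado--Pultr), so there is nothing to compare your argument against; on its own terms your derivation is correct and complete. The key structural point you isolate -- that a union of sublocales is already closed under $a\ra(-)$, so that by Lemma \ref{l:small-sl} the join in $\mf{S}(L)$ is just the meet-closure $\ca{M}$ of the union -- is exactly the right tool, and all the individual Heyting computations ($(a\we b)\ra x=a\ra(b\ra x)$, $(\bve_i a_i)\ra x=\bwe_i(a_i\ra x)$, $x=(a\ra x)\we(a\ve x)$) check out; note only that you state the union lemma for two sublocales but apply it to the arbitrary family $\bve_i\op(a_i)$, where the identical argument goes through verbatim. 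In item (6), passing from $\cl(a)\se\op(b)$ to $\cl(a)\cap\cl(b)=\{1\}$ and back uses that $\mf{S}(L)$ is distributive (a coframe), which the paper records, so invoking complementarity there is legitimate. You are also right about the two defects in the statement as printed: the first identity of item (4) should read $\bca_i\cl(a_i)=\cl(\bve_i a_i)$, and item (5) is missing its right-hand side, which should be $\{1\}$, the bottom of $\mf{S}(L)$.
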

Every sublocale can be written as an intersection of sublocales of the form $\op(x)\ve \cl(y)$. Additionally, every sublocale $S\se L$ has a \emph{closure} $\mi{cl}(S)=\bca \{\cl(x):S\se \cl(x)\}$, and this is $\up \bwe S$. Every sublocale $S$ also has a \emph{fitting}, defined as $\mi{fit}(S)=\bca\{\op(x):S\se \op(x)\}$. This is a closure operator, and it is studied in \cite{clementino18}. For a coframe $C$, we say that an element $c\in C$ is \emph{linear} if $\bve_i (x_i \we c)=\bve_i x_i \we c$ for any collection $x_i\in C$.

\begin{lemma}\label{l:linear}
Complemented elements of a coframe are linear. In particular, in $\sll$ open and closed sublocales are linear.
\end{lemma}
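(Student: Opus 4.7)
The plan is to prove the infinitary distributive law $\bve_i (x_i \we c) = (\bve_i x_i) \we c$ for a complemented element $c$ by using its complement $c'$ as a bookkeeping device, together with the fact that coframes are distributive lattices and hence satisfy the finitary distributive law in both directions (including the frame-style one, since every distributive lattice does).

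First I would observe that $\bve_i (x_i \we c) \leq (\bve_i x_i) \we c$ holds in any lattice, so only the reverse inequality requires argument. Setting $y := \bve_i x_i$ and $z := \bve_i (x_i \we c)$, the task reduces to establishing $y \we c \leq z$.

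The key step, and the only one that uses the complement essentially, is to show $z \ve c' = y \ve c'$. Using finite distributivity, for each index $i$ we have $(x_i \we c) \ve c' = (x_i \ve c') \we (c \ve c') = x_i \ve c'$ since $c \ve c' = 1$. Taking joins over $i$ on both sides, and using that $(-) \ve c'$ preserves joins, yields $z \ve c' = y \ve c'$. Meeting both sides with $c$ and applying finite distributivity once more, the left side collapses to $(z \we c) \ve (c' \we c) = z \ve 0 = z$ (because $z \leq c$ and $c \we c' = 0$), while the right side collapses to $y \we c$. This gives the desired equality $y \we c = z$.

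The "in particular" clause then follows immediately from Proposition~\ref{manyfacts}(5): for any $a \in L$, the sublocales $\op(a)$ and $\cl(a)$ are complements of each other in the coframe $\sll$, so each is a complemented element and hence linear by the first part. I do not anticipate a serious obstacle here; the argument is essentially the standard observation that complemented elements of a distributive complete lattice automatically distribute over arbitrary joins, and the only subtlety is remembering that coframes enjoy the finitary two-sided distributivity needed to manipulate $c$ and $c'$ freely.
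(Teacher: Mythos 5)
Your argument is correct and complete: the identity $(x_i\we c)\ve c'=x_i\ve c'$, the fact that $(-)\ve c'$ preserves arbitrary joins in any complete lattice, and the final meet with $c$ using $z\leq c$ and $c\we c'=0$ together give exactly the nontrivial inequality, and the ``in particular'' clause does follow from Proposition~\ref{manyfacts}(5). The paper states this lemma as a background fact without proof, and what you have written is the standard complementation argument one would supply for it, so there is nothing to compare against and nothing to fix.
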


 Also particularly important are \emph{Boolean sublocales}. For an element $a\in L$ the sublocale $\{x\to a:x\in L\}$, denoted as $\bl(a)$, is the smallest sublocale containing $a$. A sublocale is a Boolean algebra if and only if it is of this form for some $a\in L$. An element $p\in L$ is \emph{prime} when $x\we y\leq p$ implies either $x\leq p$ or $y\leq p$, for all $x,y\in L$. Elements of the form $\bl(p)$ are also called \emph{two-element sublocales}, as for $p$ prime we have $\bl(p)=\{1,p\}$.

\begin{lemma}\label{l:prime}
For a frame $L$, the following hold for each prime $p\in L$ and all elements $x,y\in L$.
\begin{itemize}
    \item $x\to p=1$ if $x\leq p$, and $x\to p=p$ if $x\nleq p$.
    \item $\bl(p)\se \op(x)$ if and only if $x\nleq p$.
    \item The element $\bl(p)$ is completely join-prime in $\mf{S}(L)$.
    \item The prime elements of $\mf{S}(L)$ are the sublocales of the form $\bl(p)$ for some prime $p\in L$.
\end{itemize}
    
\end{lemma}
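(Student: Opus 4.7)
My plan is to handle the four bullets in order, each leaning on its predecessors.

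Bullet (1) follows from the Heyting adjunction $x \we (\cdot) \dashv (x \to \cdot)$: if $x \leq p$, then $x \to p = 1$ directly; if $x \nleq p$, the inequality $p \leq x \to p$ is automatic, while setting $y = x \to p$ yields $x \we y \leq p$ by the adjunction, so primeness of $p$ together with $x \nleq p$ forces $y \leq p$. Bullet (2) is then a short computation: (1) gives $\bl(p) = \{x \to p : x \in L\} = \{1, p\}$ (using $p \neq 1$), so $\bl(p) \se \op(x)$ collapses to $p \in \op(x)$; combining the standard nucleus formula $s \in \op(x) \iff x \to s = s$ with (1) at $s = p$ identifies this with $x \nleq p$.

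For bullet (3), given $\bl(p) \se \bve_i S_i$, note that because each $S_i$ is already closed under Heyting implication, Lemma \ref{l:small-sl} simplifies to $\bve_i S_i = \ca{M}(\bcu_i S_i)$. Hence $p = \bwe X$ for some $X \se \bcu_i S_i$; partitioning as $X_i = X \cap S_i$ and setting $a_i = \bwe X_i \in S_i$ yields $p = \bwe_i a_i$ with each $a_i \geq p$. Primeness of $p$ then forces some $a_i = p$, giving $p \in S_i$ and so $\bl(p) \se S_i$. For bullet (4), one direction is (3). Conversely, let $S$ be a prime sublocale with $S \neq \{1\}$ and set $p = \bwe(S \sm \{1\})$, which lies in $S$ by meet-closure and is the minimum of $S \sm \{1\}$, so $p \neq 1$. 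If some $s \in S$ were distinct from both $1$ and $p$, forcing $s > p$, the decomposition $S = (S \cap \op(s)) \ve (S \cap \cl(s))$ would apply, valid because $\op(s)$ and $\cl(s)$ are complements and hence linear (Lemma \ref{l:linear} and Proposition \ref{manyfacts}(5)). Join-primality of $S$ forces coincidence with one factor, but $s \notin \op(s)$ and $p \notin \cl(s)$ exclude both, a contradiction. Hence $S = \{1, p\} = \bl(p)$, and that $p$ is prime follows by closure of $S$ under $x \to p \in \{1, p\}$: from $x \we y \leq p$ and $x \nleq p$ one concludes $y \leq x \to p = p$.

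The main obstacle lies in the primeness step of (3): extracting a single index $i$ with $a_i = p$ from an arbitrary meet $\bwe_i a_i = p$ must be done carefully, because ordinary finite primeness addresses only pairwise meets. I expect to handle this either by a pairing argument that exploits the uniform bound $a_i \geq p$ throughout the family, or by invoking the appropriate strengthened notion of primeness that fits the pointfree setting; a subsidiary care point in (4) is the coframe decomposition along the complemented pair $\op(s), \cl(s)$, which rests crucially on the linearity of complemented elements asserted in Lemma \ref{l:linear}.
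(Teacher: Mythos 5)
Your treatments of the first, second and fourth bullets are correct. In particular, the argument for the fourth bullet --- splitting a join-prime sublocale $S$ along the complemented pair $\op(s),\cl(s)$ and using minimality of $\bwe (S{\sm}\{1\})$ --- is a complete proof; note that it uses only \emph{binary} join-primality of $S$, and that the forward direction likewise needs only binary join-primality of $\bl(p)$, which your pairing idea does deliver. The paper states this lemma as background without proof, so there is nothing to compare against on that front.

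The obstacle you flag in the third bullet, however, is not a removable technicality: the implication you need, from $p=\bwe_i a_i$ with $a_i\geq p$ to ``$a_i=p$ for some $i$'', is precisely the paper's definition of $p$ being a \emph{covered} prime, and primes need not be covered. Concretely, take $L=[0,1]$, a complete chain and hence a frame, in which $0$ is prime. Each $S_n=\{1/n,1\}$ is a sublocale (in a chain $x\to s\in\{1,s\}$, so any meet-closed subset containing $1$ is a sublocale), and $\bve_n S_n=\ca{M}(\bcu_n S_n)$ contains $\bwe_n 1/n=0$, so $\bl(0)=\{0,1\}\se\bve_n S_n$ while $\bl(0)\not\se S_n$ for every $n$; thus $\bl(0)$ is not completely join-prime. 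No pairing argument can close the gap, because pairing only reduces to the binary case: for $\bl(p)\se S\ve T$ one writes $p=\bwe(M\cap S)\we\bwe(M\cap T)$ and ordinary primeness of $p$ finishes, but an infinite family cannot be so reduced. What your method actually proves is that $\bl(p)$ is finitely join-prime for every prime $p$, and completely join-prime exactly when $p$ is covered; the finite version is all that the paper's later uses of this bullet require (e.g.\ the binary split $\bl(p)\se\cl(x)\ve\op(x_i)$ in Proposition \ref{p:ex-suff}, where moreover $p$ is assumed covered), and the complete statement should carry the coveredness hypothesis.
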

\subsection{\texorpdfstring{T\textsubscript{D}}{TD} duality}

 A topological space $X$ is said to be $T_D$ if for every point $x\in X$ there are opens $U$ and $V$ such that $U{\sm}V=\{x\}$. For a frame $L$ we say that a prime $p\in L$ is \emph{covered} if whenever $\bwe_i x_i=p$ for some family $x_i\in L$ then $x_i=p$ for some $i\in I$. In \cite{banaschewskitd} the \emph{$T_D$ spectrum} of a frame $L$ is defined as the collection of covered primes of a frame, with the subspace topology inherited from the prime spectrum of $L$. This space is denoted as $\pt_D(L)$. This turns out to always be a $T_D$ space. A frame morphism $f:L\to M$ is a \emph{D-morphism} if for every covered prime $p\in L$ the prime $f_*(p)$ is covered. We call $\bd{Frm}_D$ the category of frames and D-morphisms. There is a dual adjunction $\Om:\bd{Top}\lra \bd{Frm}_D:\pt_D$, where the fixpoints on the space side are the $T_D$ spaces, and on the frame side these are the \emph{D-spatial} frames, which can be characterized as those frame such that all their elements are the meet of the covered primes above them. We will use the following two results.

 \begin{proposition}\label{allpxarecov}(\cite{banaschewskitd}, Proposition 2.3.2)
    A space $X$ is $T_D$ if and only if all elements of the form $X{\sm}\overline{\{x\}}$ are covered primes in $\Om(X)$.
\end{proposition}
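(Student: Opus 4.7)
The plan is to prove both directions directly. Fix $x\in X$ and let $p_x:=X\sm\overline{\{x\}}\in \Omega(X)$; I will repeatedly use that whenever an open $V$ meets $\overline{\{x\}}$ at some point $y$, we have $x\in V$ (because $V$ is an open neighbourhood of $y$ and $y\in\overline{\{x\}}$). Equivalently, an open $V\supseteq p_x$ satisfies $V\neq p_x$ if and only if $x\in V$.

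\textbf{Forward implication.} Assume $X$ is $T_D$. To show $p_x$ is prime, suppose $V\cap W\se p_x$ with $V\not\se p_x$ and $W\not\se p_x$; then both $V$ and $W$ meet $\overline{\{x\}}$, so $x\in V\cap W\se p_x$, contradicting $x\in\overline{\{x\}}$. For coveredness, suppose $p_x=\bigwedge_i V_i$ with $V_i\neq p_x$ for every $i$; each $V_i\supseteq p_x$ so $x\in V_i$. By $T_D$ pick an open $U$ with $U\cap\overline{\{x\}}=\{x\}$, which is equivalent to $U\se p_x\cup\{x\}$. Then for every $i$ any point of $U$ is either in $p_x\se V_i$ or equals $x\in V_i$, so $U\se V_i$. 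Hence $U\se \bigwedge_i V_i=p_x$, contradicting $x\in U\sm p_x$.

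\textbf{Backward implication.} Assume each $p_y$ is a covered prime in $\Omega(X)$. Consider $\ca{V}_x=\{V\in\Omega(X):V\supseteq p_x,\ V\neq p_x\}$, a nonempty family containing $X$, and let $W$ be its meet in $\Omega(X)$. Then $W\supseteq p_x$; moreover $W\neq p_x$, because otherwise coveredness of $p_x$ would force some $V\in\ca{V}_x$ to equal $p_x$, impossible. Hence $W\in \ca{V}_x$ and is its smallest element, and the standing observation gives $x\in W$. If $y\in W\cap\overline{\{x\}}$ with $y\neq x$, then $\overline{\{y\}}\se\overline{\{x\}}$; the $T_0$ hypothesis standing in this setting forces this inclusion to be proper, so $p_y\in \ca{V}_x$, and minimality gives $W\se p_y$, contradicting $y\in W\cap\overline{\{y\}}$. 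Thus $W\cap\overline{\{x\}}=\{x\}$, and $W$ witnesses $T_D$ at $x$.

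The conceptual core of the argument is that coveredness of $p_x$ is exactly what ensures the family of proper open overelements of $p_x$ attains its meet as an element of the family itself, producing the $T_D$ witness. The main technical subtlety is the use of $T_0$ in the backward direction, without which a two-point indiscrete space would be a counterexample; this is tacit in the Banaschewski--Pultr framework, which treats $T_0$ spaces throughout.
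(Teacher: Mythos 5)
Your argument is correct, and there is nothing in the paper to compare it against: Proposition \ref{allpxarecov} is imported from Banaschewski--Pultr as a citation, with no proof given. Both directions of your proof check out. The forward direction correctly isolates that primality of $X\setminus\overline{\{x\}}$ is automatic and that only coveredness uses $T_D$, via the reformulation $U\cap\overline{\{x\}}=\{x\}$ of the witness condition; the backward direction's device of taking $W$ to be the meet of all proper open supersets of $p_x$ and using coveredness to force $W\neq p_x$ is exactly the right use of the hypothesis. Your closing remark about $T_0$ is not a cosmetic caveat but a genuine correction to the statement as printed here: for the two-point indiscrete space, $\emptyset$ is a covered prime equal to every $X\setminus\overline{\{x\}}$, yet the space is not $T_D$, so the backward implication fails without $T_0$. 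This is harmless in context (the paper only invokes the proposition for $T_0$ spaces, e.g.\ in Lemma \ref{l:N(x)exact}, and $T_D$ implies $T_0$ so the forward direction is unaffected), but the hypothesis should be made explicit, and your proof correctly identifies the single step --- promoting $\overline{\{y\}}\subseteq\overline{\{x\}}$ to a proper inclusion --- where it is needed.
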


Furthermore, in \cite{arrieta21} the notion of \emph{D-sublocale} is introduced. This is a sublocale $S\se L$ such that the corresponding surjection is in $\bd{Frm}_D$. We have the following result.
\begin{theorem}\label{t:d-subloc}
    For a frame $L$, the D-sublocales form a subcolocale $\mf{S}_D(L)\se \sll$. We also have a subcolocale inclusion $\Sc(L)\se \mf{S}_D(L)$.
\end{theorem}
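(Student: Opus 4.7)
The plan is to split the proof into three parts: (1) each closed sublocale of $L$ is a D-sublocale, (2) joins of D-sublocales are D-sublocales in $\sll$, and (3) $\mf{S}_D(L)$ is closed under the coHeyting difference $\sm$ of $\sll$. Parts (1) and (2) yield the inclusion $\Sc(L)\se\mf{S}_D(L)$, since $\Sc(L)$ is by definition the join-closure of closed sublocales, and together with (3) they establish that $\mf{S}_D(L)$ is a subcolocale. Throughout I use the observation that primes of a sublocale $S\se L$ are precisely those primes of $L$ that lie in $S$, since meets in $S$ coincide with meets in $L$.

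For (1), fix $a\in L$ and suppose $p$ is a covered prime of $\cl(a)=\up a$; necessarily $p\geq a$. For any $\{x_i\}\se L$ with $\bwe_i x_i=p$, each $x_i\geq p\geq a$, so all $x_i$ lie in $\up a$. This meet representation is therefore already internal to $\cl(a)$, and coveredness there gives $x_i=p$ for some $i$, proving $p$ is covered in $L$. For (2), let $\{S_i\}_{i\in I}$ be D-sublocales with $S=\bve_i S_i$. Since each $S_i$ is itself a sublocale, Lemma \ref{l:small-sl} yields $S=\ca{M}(\bcu_i S_i)$: every element of $S$ is a meet of elements drawn from the $S_i$. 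Given a covered prime $p$ of $S$, write $p=\bwe_j y_j$ with $y_j\in\bcu_i S_i$; coveredness in $S$ forces some $y_{j_0}=p$, placing $p\in S_{i_0}$ for some $i_0$. Any meet representation of $p$ with entries in $S_{i_0}$ is \emph{a fortiori} one in $S$, so $p$ is covered in $S_{i_0}$, and the D property of $S_{i_0}$ then gives coveredness in $L$.

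Step (3) is the main technical obstacle. For $S\in\mf{S}_D(L)$ and $T\in\sll$, set $R=S\sm T$; coframe distributivity in $\sll$ gives $S=S\cap(T\ve R)=(S\cap T)\ve R$, and so by Lemma \ref{l:small-sl} every element of $S$ is a meet of elements from $(S\cap T)\cup R$. To show $R\in\mf{S}_D(L)$ the strategy is to prove any covered prime $p$ of $R$ is covered in $S$ and then apply the D property of $S$. Given a putative representation $\bwe_i z_i=p$ in $S$, I decompose each $z_i=a_i\we b_i$ with $a_i\in S\cap T$ and $b_i\in R$, so that $p=(\bwe_i a_i)\we(\bwe_i b_i)$ by the identity $\bwe_i(a_i\we b_i)=(\bwe_i a_i)\we(\bwe_i b_i)$. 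Since $\bwe_i a_i\in S\cap T$, $\bwe_i b_i\in R$, and both are $\geq p$, primeness of $p$ forces $\bwe_i b_i=p$ or $\bwe_i a_i=p$. In the first case, coveredness of $p$ in $R$ produces some $b_{i_0}=p$, and $a_{i_0}\geq z_{i_0}\geq p$ then gives $z_{i_0}=p$, as desired.

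The delicate case is $\bwe_i a_i=p$ with all $a_i\in S\cap T$ and $a_i>p$: this forces $p\in T\cap R$, and the representation sits entirely in $T$. Handling it is where the core technical content of \cite{arrieta21} enters; the plan is to leverage the minimality in the definition $R=\bca\{U:S\se T\ve U\}$ together with $p\in R\cap T$ to produce a competing meet representation of $p$ in $R$, contradicting coveredness there. Once (3) is secured, the subcolocale inclusion $\Sc(L)\se\mf{S}_D(L)$ follows immediately, since both collections are closed in $\sll$ under the same joins and differences.
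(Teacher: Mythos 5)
A preliminary remark: the paper does not prove this statement at all — Theorem \ref{t:d-subloc} is quoted as background from \cite{arrieta21} — so there is no in-paper argument to compare yours against, and I assess the proposal on its own terms. Your steps (1) and (2) are correct: any meet representation in $L$ of an element of $\up a$ already lies in $\up a$, and the identification $\bve_i S_i=\ca{M}(\bcu_i S_i)$ makes the join argument work (you should justify that a prime of a sublocale $S$ is a prime of $L$ via the frame surjection $\sigma_S$, not merely via agreement of meets, but this is standard). The genuine gap is in step (3), exactly where the content of the theorem lives: in the case $\bwe_i a_i=p$ with every $a_i\in S\cap T$ strictly above $p$ you give no argument, only a ``plan'' to exploit minimality of $R=\bca\{U:S\se T\ve U\}$. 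That plan is not a routine completion. The natural witness for non-minimality, $U=\ca{S}(R\setminus\{p\})$, does satisfy $R\se \bl(p)\ve U$, but you cannot conclude $p\notin U$: sublocale generation closes under $a\ra(-)$, and nothing prevents $a\ra x=p$ for some $a\in L$ and $x\in R$ with $x<p$, which puts $p$ back into $U$ and destroys the intended contradiction with coveredness of $p$ in $R$. As it stands, the delicate case is unresolved rather than merely deferred.

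The robust way to close step (3) — and the route this paper itself takes for the exactly parallel statement about exact sublocales — is Lemma \ref{l:SCLsuff}: since every $T$ is an intersection $\bca_j(\op(x_j)\ve\cl(y_j))$ and $S\sm(\op(x)\ve\cl(y))=S\cap\cl(x)\cap\op(y)$, closure under joins reduces closure under difference to closure under $-\cap\cl(x)$ and $-\cap\op(y)$. The first is your step (1) relativized to $S$: a meet representation in $S$ of a covered prime of $S\cap\cl(x)$ already lies in $S\cap\cl(x)$, so such a prime is covered in $S$ and hence, by the D-property of $S$, in $L$. For the second, if $p$ is a covered prime of $S\cap\op(y)$ and $p=\bwe_i z_i$ with $z_i\in S$, apply $y\ra(-)$: each $y\ra z_i$ lies in $S\cap\op(y)$ and $\bwe_i(y\ra z_i)=y\ra p=p$, so coveredness gives $y\ra z_{i_0}=p$, whence $z_{i_0}=p$ because $p\leq z_{i_0}\leq y\ra z_{i_0}$. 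Combined with your steps (1) and (2) this proves the theorem; note finally that for the second assertion one should also observe that $\Sc(L)$ is already a subcolocale of $\sll$, so that the set containment $\Sc(L)\se\mf{S}_D(L)$ automatically upgrades to a subcolocale inclusion.
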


\subsection{Exact and strongly exact meets}

For a topological space $X$, we have the \emph{specialization preorder} $\leq$, defined on its points as $x\leq y$ whenever $x\in U$ implies $y\in U$ for all open sets $U\se X$. For a space $X$, we denote as $\ca{U}(X)$ the lattice of its upsets (upper-closed sets) under the specialization preorder. 
\begin{proposition}\label{upset}
For a topological space $X$, a subset is an upset in the specialization preorder if and only if it is saturated.
\end{proposition}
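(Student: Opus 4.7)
The plan is to prove the two implications directly from the definitions, using that saturated means an intersection of opens and an upset means closed under $\leq$.

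For the forward direction, I would assume $A \subseteq X$ is saturated and write $A = \bigcap_{i \in I} U_i$ for some family of open sets $U_i$. If $x \in A$ and $x \leq y$ in the specialization preorder, then $x \in U_i$ for every $i$, and the definition of the specialization preorder immediately gives $y \in U_i$ for every $i$, so $y \in A$. Hence $A$ is an upset.

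For the reverse direction, I would suppose $A$ is an upset and verify that $A$ equals $\bigcap \{U \in \Om(X) : A \subseteq U\}$, i.e.\ its own saturation. The inclusion from left to right is trivial. For the other inclusion, pick $y \notin A$; the goal is to produce an open set $U$ containing $A$ but missing $y$. Since $A$ is an upset and $y \notin A$, for each $x \in A$ we must have $x \not\leq y$, so the definition of specialization yields an open $U_x$ with $x \in U_x$ and $y \notin U_x$. Then $U := \bigcup_{x \in A} U_x$ is open, contains $A$, and avoids $y$, as required.

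There is no real obstacle here: both directions are essentially translations of the definitions. The only mildly nontrivial step is the construction of the separating open in the second direction, which uses the upset hypothesis exactly once (to conclude $x \not\leq y$ for each $x \in A$) and then assembles the $U_x$ into a single open cover of $A$.
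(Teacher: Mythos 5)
Your proof is correct: both directions follow cleanly from the definitions, and the construction of the separating open $\bigcup_{x\in A}U_x$ in the reverse direction (which also handles $A=\emptyset$, yielding the empty open set) is the standard argument. The paper states this proposition as a background fact without proof, so there is nothing to compare against; your argument is exactly the expected one.
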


A space is $T_0$ if and only if the specialization preorder is an order. A space is $T_1$ if and only if all its subsets are saturated, that is, $\ca{U}(X)=\ca{P}(X)$. A weak pointfree analogue of the $T_1$ axiom is subfitness. A frame is \emph{subfit} if whenever $x,y\in L$ are such that $x\nleq y$, there is some $u\in L$ such that $x\ve u=1$ and $y\ve u\neq 1$. In general, if a space is $T_1$ then its frame of opens is subfit, the converse does not in general hold. Recall that a meet $\bwe_i x_i$ is \emph{strongly exact} if for all $y\in L$ we have that $x_i\ra y=y$ implies $(\bwe_i x_i)\ra y=y$. A meet $\bwe_i x_i$ of a frame $L$ is \emph{exact} if for every $a\in L$ we have $(\bwe_i x_i)\ve a=\bwe_i (x_i\ve a)$. We have the following.

\begin{proposition}\label{p:pres-se}
    For every space $X$, strongly exact meets in $\Om(X)$ are open sets. This means that the embedding $\Om(X)\se \ca{U}(X)$ preserves strongly exact meets.
\end{proposition}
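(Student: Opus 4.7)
I would argue point-theoretically by contraposition. Let $(U_i)_{i\in I}$ be a family in $\Om(X)$ whose meet in $\Om(X)$ is strongly exact, and set $W:=\bwe_i U_i = \mathrm{int}(\bca_i U_i)$. The goal is to conclude $W=\bca_i U_i$, so I assume for contradiction that some point $x\in \bca_i U_i$ lies outside $W$ and exhibit an open $Y$ for which $U_i\to Y=Y$ for every $i$ while $W\to Y\neq Y$, contradicting the defining clause of strong exactness.

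The witness is $Y := X\sm \overline{\{x\}}$. Using the explicit formula $U\to V = X\sm \overline{U\sm V}$ for Heyting implication in $\Om(X)$, the inequality $U_i\to Y\leq Y$ is equivalent to $\overline{\{x\}}\se \overline{U_i\cap \overline{\{x\}}}$. Since $x$ lies in $U_i$ (as $x\in \bca_j U_j$) and in $\overline{\{x\}}$, the closed set $\overline{U_i\cap \overline{\{x\}}}$ contains $x$ and therefore all of $\overline{\{x\}}$, giving the required inclusion. Combined with the always-true $Y\leq U_i\to Y$, this yields $U_i\to Y = Y$ for each $i$.

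For $W$ the decisive observation is that $W\cap \overline{\{x\}} = \emptyset$: any $p\in \overline{\{x\}}$ has $x$ in every one of its open neighbourhoods, so $p\in W$ would force $x\in W$, against our choice of $x$. Hence $W\cap (X\sm Y)=\emptyset$, which forces $W\to Y = X$; and since $\overline{\{x\}}$ is nonempty we have $Y\neq X$, completing the contradiction. Thus $\bca_i U_i = W$ is open. The second sentence of the proposition is then immediate: by Proposition~\ref{upset} the saturated sets are precisely the upsets of the specialization preorder, which are closed under arbitrary intersection, so meets in $\ca{U}(X)$ are set-theoretic intersections; the embedding $\Om(X)\se \ca{U}(X)$ therefore preserves exactly those meets of opens that happen to equal their set-theoretic intersection, and we have just shown this is the case for every strongly exact meet. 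The only delicate step is the Heyting-implication identity; once it is in hand, the rest is a short chase on points and the specialization preorder.
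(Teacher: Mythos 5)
Your argument is correct. The paper states Proposition~\ref{p:pres-se} as a background fact without giving a proof, so there is nothing to compare against line by line; your contrapositive argument --- taking a point $x\in\bca_i U_i\sm \mathrm{int}(\bca_i U_i)$ and using $Y=X\sm\overline{\{x\}}$ as the witness violating strong exactness --- is the standard way to establish it, and every step checks out: the identity $U\ra V=X\sm\overline{U\sm V}$, the inclusion $\overline{\{x\}}\se\overline{U_i\cap\overline{\{x\}}}$ from $x\in U_i$, and the disjointness $W\cap\overline{\{x\}}=\emptyset$ from openness of $W$ together with $x\notin W$. The reduction of the second sentence to the first, via Proposition~\ref{upset} and the fact that meets in $\ca{U}(X)$ are intersections, is also exactly right.
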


\begin{theorem}[see \cite{ball14}, Theorem 5.2.3]\label{t:ball}
    A $T_0$ space is $T_D$ if and only if for every exact meet $\bwe_i U_i$ in $\Om(X)$ this equals $\bca_i U_i$. This is equivalent to the embedding $\Om(X)\se \ca{U}(X)$ preserving exact meets.
\end{theorem}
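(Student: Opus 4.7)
The plan is to prove both implications, with Proposition~\ref{allpxarecov} as the bridge between the $T_D$ axiom and the algebraic property of being a covered prime. The equivalence with preservation along the embedding $\Om(X)\se\ca{U}(X)$ is essentially formal: meets in $\ca{U}(X)$ are set-theoretic intersections (intersections of upsets being upsets), so preservation of an exact meet of opens is exactly the condition that the frame value agrees with $\bca_i U_i$.

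For the forward direction, assume $X$ is $T_D$ and let $V := \bwe_i U_i$ be an exact meet in $\Om(X)$. Since $V\se\bca_i U_i$ is automatic, I must rule out any $x\in\bca_i U_i\sm V$. Set $W := X\sm\overline{\{x\}}$; because $V$ is open (hence saturated) and $x\notin V$, any $y\in V\cap\overline{\{x\}}$ would satisfy $y\leq x$ with $y\in V$, forcing $x\in V$ by the upset property. Thus $V\cap\overline{\{x\}}=\emptyset$, so $V\se W$ and $V\ve W = W$. Exactness with $A := W$ yields $W = V\ve W = \bwe_i(U_i\ve W)$. But each $U_i\ve W$ contains $x$ (as $x\in U_i$) while $W$ does not, so $U_i\ve W > W$ for every $i$; this contradicts the fact that $W$ is a covered prime, supplied by Proposition~\ref{allpxarecov}.

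For the converse, proceed contrapositively: assume $X$ is $T_0$ but not $T_D$. By Proposition~\ref{allpxarecov} some $W_x := X\sm\overline{\{x\}}$ fails to be a covered prime. Consider the family $\ca{V} := \{V\in\Om(X) : V\supseteq W_x\cup\{x\}\}$. Since open sets are upsets in the specialization preorder, the relation $V>W_x$ in $\Om(X)$ is equivalent to $V\supseteq W_x$ together with $x\in V$, and hence to $V\supseteq W_x\cup\{x\}$; thus $\ca{V}$ is precisely the set of opens strictly above $W_x$, and the non-covered hypothesis gives $\bwe\ca{V} = W_x$. A routine $T_0$ check shows $W_x\cup\{x\}$ is an upset, hence saturated by Proposition~\ref{upset}, so $\bca\ca{V} = W_x\cup\{x\}$. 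This strictly exceeds the meet $W_x$, since $x\notin W_x$.

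It remains to show that $\bwe\ca{V}$ is exact, i.e.\ that $\bwe_{V\in\ca{V}}(V\ve A) = W_x\ve A$ for every open $A$. Computing the frame meet,
\[
\bwe_{V\in\ca{V}}(V\ve A) \;=\; \mathrm{int}\bigl(\bca_V(V\cup A)\bigr) \;=\; \mathrm{int}\bigl((W_x\cup\{x\})\cup A\bigr),
\]
I compare with $W_x\ve A$ by cases on $x$. If $x\in A$, the set $(W_x\cup\{x\})\cup A = W_x\cup A$ is already open and matches $W_x\ve A$. If $x\notin A$, then $A$ being an open upset missing $x$ forces $A\cap\overline{\{x\}}=\emptyset$ and hence $A\se W_x$; both sides then collapse to $W_x$, using that $\mathrm{int}(W_x\cup\{x\}) = W_x$ in the non-$T_D$ regime---any strictly larger open sitting inside $W_x\cup\{x\}$ would contain $x$, forcing it to equal $W_x\cup\{x\}$ and contradicting non-openness of the latter. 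The main subtlety lies in this identification of the interior; once the family $\ca{V}$ has been spotted, the upset/saturation machinery of Proposition~\ref{upset} makes the rest routine.
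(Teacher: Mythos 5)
Your proof is correct. Note that the paper itself gives no proof of this statement: it is imported from the literature (\cite{ball14}, Theorem 5.2.3), and the closest the paper comes to re-deriving it is the filter-theoretic route of Lemma \ref{l:N(x)exact} and Theorem \ref{t:chartd}, which pass through covered primes via Proposition \ref{p:cpexact}. Your argument is a self-contained, purely topological version of the same circle of ideas, and it holds up: the forward direction correctly combines exactness at $A=X\sm\overline{\{x\}}$ with coveredness from Proposition \ref{allpxarecov}, and the converse exhibits the witness $\bwe\{V : V> W_x\}$, whose exactness you verify by hand --- this is in fact the topological instance of the paper's Lemma \ref{bigmeet}, which you could have cited to shorten the case analysis. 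Two small points worth making explicit: first, $W_x$ is always a prime of $\Om(X)$, so ``fails to be a covered prime'' in your contrapositive means precisely ``fails to be covered''; second, the non-openness of $W_x\cup\{x\}$ that you invoke when computing $\mathrm{int}(W_x\cup\{x\})=W_x$ is itself a consequence of $W_x$ being non-covered (if $W_x\cup\{x\}$ were open it would be the least open strictly above $W_x$, forcing coveredness), so your phrase ``in the non-$T_D$ regime'' is doing real work and deserves a sentence. With those clarifications the proof is complete.
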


A filter of a frame is \emph{strongly exact} if it is closed under strongly exact meets. We call $\fse(L)$ the ordered collection of strongly exact filters. This is a frame where meets are computed as intersections, and additionally it is a sublocale of $\mf{Filt}(L)$.  A filter is \emph{exact} if it is closed under exact meets. Exact and strongly exact filters are studied in \cite{moshier20}. There, it is also shown that the exact filters form a frame, and in particular the frame $\fe(L)$ of exact filters is a sublocale of $\fse(L)$. We have the following, proven in \cite{moshier20}.

\begin{theorem}\label{eandse}
    We have an isomorphism of coframes
    \begin{gather*}
        \mi{fitt}:\opp{\fse(L)}\cong \mf{S}_{\op}(L),\\
        F\mapsto \bca \{\op(f):f\in F\}.
    \end{gather*}
    We also have an isomorphism of frames
    \begin{gather*}
         \mb{cl}:\fe(L)\cong \mf{S}_{\cl}(L),\\
         F\mapsto \bve \{\cl(f):f\in F\}.
    \end{gather*}
\end{theorem}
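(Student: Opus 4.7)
The plan is to construct explicit inverses for both maps, relying on two key characterizations of (strongly) exact meets: a meet $\bwe_i a_i$ in $L$ is strongly exact if and only if $\op(\bwe_i a_i)=\bca_i \op(a_i)$ in $\sll$, and it is exact if and only if $\bve_i \cl(a_i)=\cl(\bwe_i a_i)$. The strong-exactness version is immediate from unpacking the definition: $y\in \op(a)$ iff $a\to y=y$, so $y\in \bca_i \op(a_i)$ says precisely that each $a_i\to y=y$, and strong exactness of $\bwe_i a_i$ forces $(\bwe_i a_i)\to y=y$. The exact version follows by a direct nucleus computation in $\sll$, using $\cl(a)\ve \cl(b)=\cl(a\we b)$ from Proposition \ref{manyfacts}.

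For the first isomorphism, I would define an inverse $G\colon \mf{S}_{\op}(L)\to \opp{\fse(L)}$ by $G(S)=\{a\in L:S\se \op(a)\}$. That $G(S)$ is a filter is immediate from $\op(a\we b)=\op(a)\cap \op(b)$ and monotonicity of $\op$; strong-exact closure follows directly from the characterization above, since if $\{a_i\}\se G(S)$ with $\bwe_i a_i$ strongly exact, then $S\se \bca_i \op(a_i)=\op(\bwe_i a_i)$. For the composition $\mi{fitt}\circ G(S)=S$, one direction is formal, and the other uses that each $S\in \mf{S}_{\op}(L)$ admits a presentation $S=\bca_j \op(b_j)$, whence every $b_j\in G(S)$ and $\mi{fitt}(G(S))\se \bca_j\op(b_j)=S$.

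The main obstacle is verifying $G\circ \mi{fitt}(F)=F$ for a strongly exact filter $F$. The containment $F\se G(\mi{fitt}(F))$ is immediate, and $G(\mi{fitt}(F))$ is itself a strongly exact filter, so it suffices to show it is the smallest strongly exact filter containing $F$. For this, observe that closing any filter under strongly exact meets does not change its image under $\mi{fitt}$: by the characterization, each newly added element has its $\op$-image already equal to an intersection of $\op(f)$'s for $f\in F$. Hence $G(\mi{fitt}(F))$ coincides with the strongly-exact closure of $F$, which equals $F$. Preservation of the coframe structure (arbitrary intersections map to intersections, and finite joins are preserved via the join formula in $\fse(L)$ as a sublocale of $\Fi(L)$) is then a routine verification.

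The second isomorphism is formally dual. Define $H\colon \mf{S}_{\cl}(L)\to \fe(L)$ by $H(T)=\{a\in L:\cl(a)\se T\}$. That $H(T)$ is an exact filter uses $\cl(a)\ve \cl(b)=\cl(a\we b)$ for filter closure and the exactness characterization $\bve_i \cl(a_i)=\cl(\bwe_i a_i)$ for closure under exact meets. The two compositions $\mb{cl}\circ H$ and $H\circ \mb{cl}$ reduce to identities by the same structural argument as in the first case, with the exact-characterization replacing the strong-exact one, and preservation of the frame structure follows routinely.
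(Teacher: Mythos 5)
The paper does not prove this statement at all: it is quoted verbatim from \cite{moshier20}, so the comparison here is against the known proof in that reference. Your setup is the right one — the Galois connection $F\mapsto \bca_{f\in F}\op(f)$, $S\mapsto\{a:S\se\op(a)\}$, together with the two characterizations ($\bwe_i a_i$ strongly exact iff $\op(\bwe_i a_i)=\bca_i\op(a_i)$, exact iff $\cl(\bwe_i a_i)=\bve_i\cl(a_i)$), both of which you state correctly and which do give that $G(S)$ is a strongly exact filter, that $\mi{fitt}(G(S))=S$ for fitted $S$, and that the coframe/frame structure transfers automatically once the order bijections are in place. But there is a genuine gap at the step you yourself flag as the main obstacle: showing $G(\mi{fitt}(F))\se F$ for strongly exact $F$, i.e.\ that $\bca_{f\in F}\op(f)\se\op(a)$ forces $a\in F$. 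Your justification — that closing $F$ under strongly exact meets does not change $\mi{fitt}(F)$ — only yields $\overline{F}\se G(\mi{fitt}(F))$ (where $\overline{F}$ is the strongly exact closure), which you already had. The sentence ``each newly added element has its $\op$-image already equal to an intersection of $\op(f)$'s for $f\in F$'' is precisely the claim to be proven, and it does not follow from the hypothesis: an $a$ with $\op(a)\supseteq\bca_{f\in F}\op(f)$ need not satisfy $\op(a)=\bca_{f\in F'}\op(f)$ for any $F'\se F$. Concluding ``hence $G(\mi{fitt}(F))$ coincides with the strongly exact closure'' is circular, since that identity for $\overline{F}$ is exactly the statement being proved one level up. This is the hard core of the theorem in \cite{moshier20}; an intersection $\bca_{f\in F}\op(f)$ has no element-wise description in terms of the $f$'s (its nucleus is a transfinite iterate of $y\mapsto\bve_{f\in F}(f\to y)$), which is why the containment cannot be read off formally.

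It is worth noting the asymmetry with the second isomorphism, where you declare the argument ``formally dual.'' There the analogous step actually can be closed elementarily, because $\bve_{f\in F}\cl(f)$ is computed concretely as the meet-closure of $\bcu_{f\in F}\up f$: if $\cl(a)\se\bve_{f\in F}\cl(f)$ then every $x\geq a$ is a meet of elements of $F\cap\up a$, so $a=\bwe(F\cap\up a)$ and $\cl(a)=\bve\{\cl(c):c\in F\cap\up a\}$, which by your characterization says this meet is exact; closure of $F$ under exact meets then gives $a\in F$. No such concrete description is available for intersections of open sublocales, so the strongly exact half needs the genuinely different (and harder) argument of \cite{moshier20}, which your proposal does not supply.
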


We will refer to several important concrete collections of filters. Since the collection $\mathsf{Filt}(L)$ is a frame, there is a Heyting operation $\ra$ on it. In the following, whenever we write $F\ra G$ for two filters $F$ and $G$, it will be understood that we are referring to this operation. Notice that for a frame $L$ and for $a,b\in L$ we have
$\up a\ra \up b=\{x\in L:b\leq x\ve a\}$. In \cite{jakl24}, we have the following characterization of exact filters.

\begin{proposition}[\cite{jakl24}, Proposition 5.5]\label{charexact}
A filter is exact if and only if it is the intersection of filters of the form $\up a\ra \up b$ for some $a,b\in L$.
\end{proposition}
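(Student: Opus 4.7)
The statement splits into two directions. The easy direction, that intersections of filters of the form $\up a\ra \up b$ are exact, I would handle first: writing out $\up a\ra \up b=\{x\in L:b\leq x\ve a\}$, if $\bwe_i x_i$ is an exact meet with every $x_i$ in this filter then $b\leq x_i\ve a$ for all $i$, whence by exactness $b\leq \bwe_i(x_i\ve a)=(\bwe_i x_i)\ve a$, so the meet too lies in the filter. Exactness of filters is stable under arbitrary intersection since it is a closure condition.

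For the converse, my plan is to pass through the isomorphism $\mb{cl}:\fe(L)\cong \mf{S}_{\cl}(L)$ of Theorem \ref{eandse} and exploit the background fact that every sublocale of $L$ is an intersection of sublocales of the form $\op(u)\ve \cl(v)$. The key dictionary identity I would establish is that for all $y,a,b\in L$,
\[
y\in \up a\ra \up b \iff \cl(y)\se \op(a)\ve \cl(b).
\]
Using that $\op(a)$ and $\cl(a)$ are complements in $\sll$ (Proposition \ref{manyfacts}) and that $\op(a)$ is linear (Lemma \ref{l:linear}), the condition $\cl(y)\se \op(a)\ve \cl(b)$ reduces to $\cl(y)\cap \cl(a)\se \cl(b)$, hence to $\cl(y\ve a)\se \cl(b)$, hence to $b\leq y\ve a$. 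Joining over $y$, the filter-theoretic statement $F\se \up a\ra \up b$ corresponds to the sublocale-theoretic statement $\mb{cl}(F)\se \op(a)\ve \cl(b)$, and analogously $x\in \up a\ra \up b$ corresponds to $\cl(x)\se \op(a)\ve \cl(b)$.

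Now given exact $F$ and $x\notin F$, set $S=\mb{cl}(F)=\bve_{y\in F}\cl(y)$. Since $\mb{cl}$ is an order isomorphism and $\mb{cl}(\up x)=\cl(x)$, the hypothesis $x\notin F$ translates to $\cl(x)\nsubseteq S$. Expanding $S=\bca\{\op(u)\ve \cl(v):S\se \op(u)\ve \cl(v)\}$, the failure $\cl(x)\nsubseteq S$ forces a witnessing pair $(u,v)$ with $S\se \op(u)\ve \cl(v)$ but $\cl(x)\nsubseteq \op(u)\ve \cl(v)$. Translating back through the dictionary yields $F\se \up u\ra \up v$ and $x\notin \up u\ra \up v$, so $F=\bca\{\up u\ra \up v:F\se \up u\ra \up v\}$, which is what we wanted.

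The main obstacle I anticipate is setting up the dictionary cleanly: the calculation with complementation and linearity to establish the equivalence $y\in \up a\ra \up b \iff \cl(y)\se \op(a)\ve \cl(b)$, and the verification that the isomorphism $\mb{cl}$ really does convert membership $x\in F$ into inclusion $\cl(x)\se \mb{cl}(F)$ (which itself uses that $\mb{cl}(\up x)=\cl(x)$ and that $\mb{cl}$ is order-preserving in both directions). Once those preliminaries are in place, the proof reduces to a mechanical transport across the isomorphism, with the standard $\op(u)\ve \cl(v)$-decomposition of sublocales carrying the real content.
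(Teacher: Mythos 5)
Your proof is correct, but note that the paper itself offers no proof of this statement: it is imported verbatim from \cite{jakl24} (Proposition 5.5), so there is no in-paper argument to compare yours against. Judged on its own terms, your argument is complete relative to the background the paper does state. The easy direction is exactly the computation with $\up a\ra\up b=\{x:b\leq x\ve a\}$ and the definition of exact meet. For the converse, your dictionary $y\in\up a\ra\up b\iff\cl(y)\se\op(a)\ve\cl(b)$ is right (via $\cl(y)\cap\cl(a)=\cl(y\ve a)$ and complementation/linearity of $\op(a),\cl(a)$ from Proposition \ref{manyfacts} and Lemma \ref{l:linear} --- beware that item 4 of Proposition \ref{manyfacts} as printed has the meet/join swapped), and the reflection of $x\in F$ into $\cl(x)\se\mb{cl}(F)$ genuinely needs the order isomorphism of Theorem \ref{eandse} in the nontrivial direction, since a join of closed sublocales containing $\cl(x)$ does not by itself force $x$ into the filter; you correctly flag this. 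The one caveat worth recording is a dependency one rather than a gap: your route derives the characterization from the isomorphism $\mb{cl}:\fe(L)\cong\Sc(L)$ together with the $\op(u)\ve\cl(v)$-decomposition of sublocales, so it is only non-circular because that isomorphism (from \cite{moshier20}) is established independently of the characterization; within the present paper, where both are black boxes, this is unobjectionable.
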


\begin{lemma}[\cite{suarez24}, Lemma 4.10]\label{principalmin}
    For a frame $L$, the sublocale $\fe(L)\se \Fi(L)$ is the smallest one containing all principal filters.
\end{lemma}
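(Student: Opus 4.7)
The plan is to combine Proposition~\ref{charexact} (which gives an explicit description of exact filters as intersections of implications between principal filters) with the definition of a sublocale (closure under meets and closure under $x\to s$ for arbitrary $x$), together with the fact that meets in $\Fi(L)$ are just intersections.

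First I would verify that $\fe(L)$ itself contains every principal filter, which is immediate since $\up a$ is closed under all meets and hence in particular under exact meets. Combined with the hypothesis that $\fe(L)\se \Fi(L)$ is a sublocale, this shows $\fe(L)$ is one candidate; it remains to show it is the smallest such.

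For the minimality, let $S\se \Fi(L)$ be any sublocale containing all principal filters, and let $F\in \fe(L)$ be an exact filter. By Proposition~\ref{charexact}, we may write
\[
F=\bigcap_{i\in I}(\up a_i \ra \up b_i)
\]
for some $a_i,b_i\in L$. Since $\up b_i\in S$ by hypothesis and $S$ is a sublocale of $\Fi(L)$, condition~(2) of the definition of sublocale applied with $s=\up b_i\in S$ and $x=\up a_i\in \Fi(L)$ yields $\up a_i\ra \up b_i\in S$ for every $i\in I$. Now meets in $\Fi(L)$ are computed as intersections and $S$ is closed under arbitrary meets by condition~(1), so $F\in S$. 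Hence $\fe(L)\se S$, proving minimality.

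The main conceptual ingredient is Proposition~\ref{charexact}, which reduces the a priori coinductive description of exact filters (closure under exact meets) to the concrete form of intersections of implications of principal filters; without it one would need to argue directly that the sublocale generated by principal filters is already closed under exact meets, which is more delicate. Once that characterization is available, the argument is a short sublocale-closure computation and no real obstacle remains.
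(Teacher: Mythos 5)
Your proof is correct and follows the route the paper's own machinery suggests: combining Proposition~\ref{charexact} with the closure properties of sublocales is exactly the content of Lemma~\ref{meetofheyting} specialized to $\ca{X}=\{\up b:b\in L\}$, so your minimality argument is that lemma unrolled, and the verification that principal filters are exact is the remaining easy half. No gaps.
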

 Recall from \cite{suarez24} that a frame map $f:L\to M$ is \emph{exact} if, whenever $\bwe_i x_i\in L$ is an exact meet, we have that $\bwe_i f(x_i)$ is exact and $\bwe_i f(x_i)=f(\bwe_i x_i)$. We call $\bd{Frm}_{\ca{E}}$ the category of frames with exact maps. 
\begin{proposition}[\cite{suarez24}, Proposition 4.14]\label{whenelifts}
    A frame map $f:L\ra M$ is exact if and only if the morphism can be extended to a morphism 
        \[
        f_{\ca{E}}:(L,\opp{\fe(L)})\ra (M,\opp{\fe(M)}).
        \]
\end{proposition}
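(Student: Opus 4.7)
The plan is to reduce the statement to the characterization, already established in \cite{suarez24} and recalled in the introduction, that a frame map $f:L\to M$ lifts to a frame map $\Sc(L)\to \Sc(M)$ if and only if it is exact. First I would unfold what a $\bd{Raney}$ morphism $(L,\opp{\fe(L)})\to (M,\opp{\fe(M)})$ amounts to: it is a coframe map $h:\opp{\fe(L)}\to \opp{\fe(M)}$ whose restriction to the meet-generating frames (embedded via $a\mapsto \up a$) agrees with $f$, i.e.\ satisfies $h(\up a)=\up f(a)$ for all $a\in L$. Since reversing a (co)frame swaps meets with joins, such an $h$ is the same data as a frame map $\fe(L)\to \fe(M)$ sending $\up a$ to $\up f(a)$. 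Hence the proposition reduces to the question of when such a frame map exists.

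Next I would transport the question across the frame isomorphism $\mb{cl}:\fe(L)\cong \Sc(L)$ of Theorem~\ref{eandse}. Because $\mb{cl}$ sends $F$ to $\bve\{\cl(x):x\in F\}$ and $\cl$ is order-reversing, for a principal filter the supremum collapses to $\mb{cl}(\up a)=\cl(a)$. Conjugating both sides by $\mb{cl}$, the frame maps $\fe(L)\to \fe(M)$ extending $f$ on principal filters correspond bijectively with the frame maps $\Sc(L)\to \Sc(M)$ satisfying $\cl(a)\mapsto \cl(f(a))$ --- that is, with the lifts of $f$ along the closed-sublocale assignment.

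Applying the cited characterization now yields the biconditional, with $f_{\ca{E}}$ obtained as the conjugate of the lift under $\mb{cl}$. The main obstacle, lying inside the invoked result rather than in the present reduction, is the forward direction: when $f$ is exact one must verify that the prescription $\cl(a)\mapsto \cl(f(a))$ extends consistently to a frame map on all of $\Sc(L)$, whose elements are arbitrary joins of closed sublocales and so in general are not themselves closed. Exactness is precisely the hypothesis needed to make this extension well-defined on joint suprema, while the converse direction is typically obtained by extracting a concrete violation of exactness from a failure of the lift on some join of closed sublocales.
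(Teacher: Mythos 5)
Your reduction itself is sound: a $\bd{Raney}$ morphism $(L,\opp{\fe(L)})\to(M,\opp{\fe(M)})$ extending $f$ is the same data as a frame map $\fe(L)\to\fe(M)$ with $\up a\mapsto\up f(a)$, and conjugating by the isomorphism $\mb{cl}:\fe(L)\cong\Sc(L)$ of Theorem~\ref{eandse} (under which $\up a\mapsto\cl(a)$) turns this into a frame map $\Sc(L)\to\Sc(M)$ with $\cl(a)\mapsto\cl(f(a))$. The problem is that this is exactly the equivalence the paper itself uses --- in the opposite direction --- to deduce Corollary~\ref{whensclifts} (``$f$ is exact iff it lifts to $\Sc(L)\to\Sc(M)$'') \emph{from} Proposition~\ref{whenelifts}. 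Within this paper's logical architecture the $\Sc$-lifting characterization is a consequence of the statement you are proving, not an independent input; the sentence in the introduction that you lean on is an informal announcement of that same corollary. So unless you supply an independent proof of the $\Sc$-lifting result, your argument is circular, and you have in any case deferred all of the actual content (``the main obstacle\dots lying inside the invoked result'') to the thing being cited.

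A non-circular route using only tools stated in this paper: apply Theorem~\ref{whenliftsfilters} with $C^*=\fe(L)$ and $D^*=\fe(M)$, which reduces the proposition to showing that $f$ is exact if and only if $f^{-1}(G)$ is an exact filter of $L$ for every exact filter $G$ of $M$. The forward implication is direct: if $\bwe_i x_i$ is exact and each $x_i\in f^{-1}(G)$, then exactness of $f$ gives that $\bwe_i f(x_i)$ is an exact meet equal to $f(\bwe_i x_i)$, hence lies in $G$ since $G$ is closed under exact meets. For the converse one uses the concrete generators of $\fe(M)$ from Proposition~\ref{charexact}, namely the filters $\up a\ra\up b=\{y\in M: b\leq y\ve a\}$, whose preimages being exact forces $\bwe_i f(x_i)\ve a\geq b$ whenever $f(\bwe_i x_i)\ve a\geq b$, from which $\bwe_i f(x_i)=f(\bwe_i x_i)$ and exactness of that meet follow. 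Writing out that converse carefully is where the real work lies, and it is the part your proposal leaves entirely untouched.
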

We say that a filter is \emph{regular} if it is a regular element in the frame of filters (that is, if it is of the form $F\ra \{1\}$ for some filter $F$). We call $\mathsf{Filt}_{\ca{R}}(L)$ the ordered collection of regular filters. Note that $\fr(L)\se \Fi(L)$ is the Booleanization of the frame of $\Fi(L)$. Regular filter also have a characterization in \cite{jakl24}.

\begin{proposition}\label{reg}
    The regular filters coincide with the intersections of filters of the form $\{x\in L:x\ve a=1\}$ for some $a\in L$. These are the Booleanization of $\Fi(L)$.
\end{proposition}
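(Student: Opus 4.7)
The plan is to exploit the Heyting structure of $\Fi(L)$, together with the explicit formula for $\up a\ra \up b$ given just before the statement. First I would specialize that formula to $b=1$ to get the identity
\[
\up a\ra \{1\}=\{x\in L: x\ve a=1\},
\]
so each filter $\{x:x\ve a=1\}$ is already a regular element, being the pseudocomplement of the principal filter $\up a$.

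Next I would use the fact, recalled in Lemma \ref{principalmin} and its surrounding discussion, that every filter $F\in \Fi(L)$ is a join of principal filters in $\Fi(L)$, namely $F=\bve_{a\in F}\up a$ (the join in $\Fi(L)$ of principal filters contained in $F$). Since $(-)\ra \{1\}$ sends joins to meets in a frame and meets in $\Fi(L)$ are intersections, this gives
\[
F\ra \{1\}=\Bigl(\bve_{a\in F}\up a\Bigr)\ra \{1\}=\bca_{a\in F}\bigl(\up a\ra \{1\}\bigr)=\bca_{a\in F}\{x\in L:x\ve a=1\}.
\]
This shows every regular filter has the required form. Conversely, for any family $\{a_i\}_{i\in I}\se L$, the same computation shows
\[
\bca_{i\in I}\{x:x\ve a_i=1\}=\Bigl(\bve_{i\in I}\up a_i\Bigr)\ra \{1\},
\]
so every such intersection is the pseudocomplement of some filter, hence regular.

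For the Booleanization statement, I would simply invoke the standard fact that in any frame (or Heyting algebra) the collection of regular elements, i.e.\ elements of the form $x\ra 0$, coincides with the fixed points of the double pseudocomplement $(-)^{**}$ and forms the Boolean reflection of the frame. Applied to $\Fi(L)$ with $0=\{1\}$, this identifies $\fr(L)$ with the Booleanization. The only thing to double-check is the notational coherence between "regular filter" (as defined by the form $F\ra \{1\}$) and "regular element of $\Fi(L)$" in the Heyting sense; since pseudocomplements are always regular and every regular element $G=G^{**}$ is visibly of the form $(G^*)\ra \{1\}$, the two notions agree. I do not anticipate a main obstacle here — every step is a direct consequence of the frame-theoretic calculus in $\Fi(L)$ and the formula for $\up a\ra \up b$.
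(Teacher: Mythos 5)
Your argument is correct. Note, however, that the paper does not prove Proposition \ref{reg} at all: it is quoted as a known characterization from the cited work of Jakl et al., with the Booleanization claim stated as a standard remark just beforehand. So there is no in-paper proof to compare against; what you have written is a valid self-contained verification. The two directions both follow, as you say, from the identity $\up a\ra \up b=\{x\in L:b\leq x\ve a\}$ specialized at $b=1$, the elementary fact that $F=\bve_{a\in F}\up a$ in $\Fi(L)$ (this is not really the content of Lemma \ref{principalmin}, which concerns $\fe(L)$, but it is immediate since joins of filters are generated by unions and $F$ is already closed under finite meets), and the Heyting law $(\bve_i x_i)\ra G=\bca_i(x_i\ra G)$ together with the fact that meets in $\Fi(L)$ are intersections. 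Your final remark reconciling ``of the form $F\ra\{1\}$'' with ``fixed point of double pseudocomplementation'' is the right thing to check and is handled correctly, since $G=G^{**}$ gives $G=G^*\ra\{1\}$ and pseudocomplements are always such fixed points.
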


 One of the main theorems of \cite{jakl24} is the following. Here, $\fcp(L)$ is the collection of completely prime filters and $\fso(L)$ that of Scott-open filters, and $\ca{I}(-)$ denotes closure under set-theoretical intersections. Note that this includes the empty intersection, namely the whole frame $L$. 
\begin{theorem}\label{posetofsubloc}
For any frame $L$, we have the following poset of sublocale inclusions:
\[
\begin{tikzcd}
\fr(L)
\ar[r,"\se"]
&\mathsf{Filt}_{\mathcal{E}}(L)
\ar[dr,"\se"]\\
&
&
\mathsf{Filt}_{\mathcal{SE}}(L).
\\
\ca{I}(\mathsf{Filt}_{\mathcal{CP}}(L))
\ar[r,"\se"]
& \ca{I}(\mathsf{Filt}_{\mathcal{SO}}(L))
\ar[ur,"\se"]
\end{tikzcd}
\]
\end{theorem}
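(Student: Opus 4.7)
The plan is to verify the four sublocale inclusions individually, all understood inside the frame $\Fi(L)$, whose sublocale lattice contains $\fr(L), \fe(L), \fse(L)$ by the background results, and into which $\ca{I}(\fcp(L))$ and $\ca{I}(\fso(L))$ must also fit as sublocales. The two easiest inclusions come first. For $\fr(L) \se \fe(L)$ I would combine Propositions \ref{reg} and \ref{charexact}: the generator $\{x \in L : x \ve a = 1\}$ of $\fr(L)$ is simply $\up a \ra \up 1$, which is one of the generators of $\fe(L)$, so passing to intersection closures yields the inclusion. For $\ca{I}(\fcp(L)) \se \ca{I}(\fso(L))$ the argument is equally short: every completely prime filter is Scott-open (directed joins are joins), so $\fcp(L) \se \fso(L)$ and the inclusion lifts to intersection closures.

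Next, for $\fe(L) \se \fse(L)$, I would prove the stronger statement that every strongly exact meet in $L$ is already exact. Using Proposition \ref{manyfacts}(3) together with the complementarity of open and closed sublocales, the map $\op(-) : L \to \sll$ is a frame embedding, in particular injective and order-reflecting. Given a strongly exact meet $\bwe_i x_i$ and any $a \in L$, I compare $\op((\bwe_i x_i) \ve a)$ and $\op(\bwe_i (x_i \ve a))$: coframe distributivity in $\sll$, combined with the defining identity $\op(\bwe_i x_i) = \bca_i \op(x_i)$ of strong exactness, shows both equal $\bca_i \op(x_i \ve a)$, and injectivity of $\op$ recovers the exactness identity $(\bwe_i x_i) \ve a = \bwe_i (x_i \ve a)$. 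Given this, since $\fse(L)$ is closed under intersections and by Proposition \ref{charexact} $\fe(L)$ is generated under intersection by the filters $\up a \ra \up b = \{x : x \ve a \geq b\}$, it suffices to observe that each such filter is strongly exact: if $\bwe_i x_i$ is a strongly exact meet with $x_i \ve a \geq b$ for all $i$, then $(\bwe_i x_i) \ve a = \bwe_i(x_i \ve a) \geq b$.

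For the last inclusion $\ca{I}(\fso(L)) \se \fse(L)$, I again reduce to the generator-level statement $\fso(L) \se \fse(L)$, using that $\fse(L)$ is already closed under intersections. Given a Scott-open filter $F$ and a strongly exact meet $\bwe_i x_i$ with all $x_i \in F$, the route I would take is via the fitting correspondence of Theorem \ref{eandse}: the associated fitted sublocale $\bca_{f \in F}\op(f)$ of $F$ enjoys a compactness property reflecting the Scott-openness of $F$, and then strong exactness $\op(\bwe_i x_i) = \bca_i \op(x_i) \supseteq \bca_{f \in F}\op(f)$ together with compactness and the order-reflecting property of $\op$ forces $\bwe_i x_i \in F$. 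The main obstacle is precisely this final step: converting Scott-openness of $F$ into a compactness statement for $\bca_{f \in F}\op(f)$ that permits passing from $\bca_i \op(x_i) \supseteq \bca_{f \in F}\op(f)$ to $\op(g) \se \op(x_{i_1}) \cap \cdots \cap \op(x_{i_n})$ for some $g \in F$, from which $g \leq \bwe_i x_i$ and hence $\bwe_i x_i \in F$ follow.
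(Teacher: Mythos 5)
The paper does not actually prove this statement itself --- it is quoted from \cite{jakl24} --- so your proposal has to stand on its own. Three of your four inclusions do. The identification $\{x: x\ve a=1\} = \up a \ra \up 1$ correctly reduces $\fr(L)\se\fe(L)$ to Propositions \ref{reg} and \ref{charexact}; $\fcp(L)\se\fso(L)$ is immediate; and your key lemma that every strongly exact meet is exact is correct and is the right engine for $\fe(L)\se\fse(L)$. One small imprecision there: a priori $\op(\bwe_i(x_i\ve a))$ is only \emph{contained in} $\bca_i\op(x_i\ve a)$, not equal to it, but the inclusion together with order-reflection of $\op$ is all you need. Note also that once you have this lemma, $\fe(L)\se\fse(L)$ follows directly without the detour through generators: a filter closed under the larger class of exact meets is automatically closed under the smaller class of strongly exact ones. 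Finally, since the theorem asserts \emph{sublocale} inclusions, you should say a word about why $\ca{I}(\fcp(L))$ and $\ca{I}(\fso(L))$ are closed under $\up a\ra(-)$ in $\Fi(L)$; this is routine via $\up a\ra F=\{x: x\ve a\in F\}$ and Lemmas \ref{meetofheyting} and \ref{l:heytingcp}, but it is part of the statement.

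The genuine gap is the one you flag yourself: $\fso(L)\se\fse(L)$. What is needed is essentially the localic Hofmann--Mislove theorem: for a Scott-open filter $F$ one must know $F=\{x\in L:\bca_{f\in F}\op(f)\se\op(x)\}$, after which strong exactness gives $\bca_{f\in F}\op(f)\se\bca_i\op(x_i)=\op(\bwe_i x_i)$ and hence $\bwe_i x_i\in F$. This is a substantive theorem, and it cannot be extracted from Theorem \ref{eandse}, whose isomorphism is only available for filters already known to be strongly exact --- using it here would be circular. Moreover, the compactness step you sketch would not close the gap even if established: a finite-subcover conclusion $\op(g)\se\op(x_{i_1})\cap\cdots\cap\op(x_{i_n})$ only yields $g\leq x_{i_1}\we\cdots\we x_{i_n}$, a finite sub-meet which lies in $F$ anyway because $F$ is a filter; it does not bound $g$ below the infinite meet $\bwe_{i\in I}x_i$, which is what $\bwe_i x_i\in F$ requires. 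As written, this inclusion is not established; it needs either a citation of the pointfree Hofmann--Mislove theorem or an independent proof of it.
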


In \cite{jakl24}, the following, too, is proven.

\begin{proposition}\label{famouschar}
For a frame $L$, we have:
\begin{itemize}
    \item $L$ is pre-spatial if and only if $\ca{I}(\fso(L))$ contains all principal filters;
    \item $L$ is spatial if and only if $\ca{I}(\fcp(L))$ contains all principal filters;
    \item $L$ is subfit if and only if $\fr(L)$ contains all principal filters.
\end{itemize}
\end{proposition}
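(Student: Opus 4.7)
The plan is to handle the three equivalences in parallel by the same recipe: for each of the filter classes $\mathcal{C} \in \{\fcp(L), \fso(L), \fr(L)\}$, I identify the largest filter in $\ca{I}(\mathcal{C})$ contained in a given principal filter $\uparrow a$, and then read off the equality of this filter with $\uparrow a$ as the separation property evaluated at $a$.

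I would start with the subfit case because regular filters have the most concrete presentation. By Proposition \ref{reg}, regular filters are exactly the intersections of filters $R_b := \{x \in L : x \vee b = 1\}$, and $\uparrow a \subseteq R_b$ holds iff $a \vee b = 1$. Hence the largest intersection of such filters contained in $\uparrow a$ is
\[
F_a \;:=\; \bca \{R_b : a \vee b = 1\},
\]
and since the other inclusion is automatic, $\uparrow a$ lies in $\fr(L)$ iff $F_a \subseteq \uparrow a$. Unpacking: whenever $x \vee b = 1$ for every $b$ with $a \vee b = 1$, then $a \leq x$. This is precisely the contrapositive of the subfitness condition at the pair $(a,x)$, and letting $a$ vary yields the stated equivalence.

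The spatial case runs along identical lines via Lemma \ref{l:prime}: every completely prime filter has the form $L \sm {\da p}$ for a prime $p$, and $\uparrow a \subseteq L \sm {\da p}$ iff $a \nleq p$. The largest intersection of completely prime filters contained in $\uparrow a$ is therefore
\[
F'_a \;:=\; \bca \{L \sm {\da p} : p \text{ prime}, \; a \nleq p\},
\]
and the inclusion $F'_a \subseteq \uparrow a$ translates to: whenever $a \nleq x$, some prime $p \geq x$ satisfies $a \nleq p$, i.e.\ the usual formulation of spatiality. The pre-spatial case follows the same template with Scott-open filters in place of completely prime ones, using the characterisation of Scott-open filters as those filters $F$ for which $L \sm F$ is Scott-closed; the relevant equivalence then reads that $\ca{I}(\fso(L))$ contains every $\uparrow a$ iff Scott-open filters separate the elements of $L$, which is the working definition of pre-spatiality.

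The main subtlety I anticipate is that $\fcp(L)$ and $\fso(L)$ are not a priori closed under arbitrary intersections, which is why the closure operator $\ca{I}(-)$ is needed on those two lines but not for $\fr(L)$, since regular filters already form a sublocale of $\Fi(L)$; one must confirm that $\ca{I}(\mathcal{C})$ includes the whole frame $L$ so that the degenerate case $a = 0$, giving $\uparrow 0 = L$ as the empty intersection, is handled uniformly. The other point to watch is that, in the spatial and pre-spatial cases, the nontrivial direction really requires witnesses (primes, resp.\ Scott-open filters) to exist in sufficient supply — this is exactly where the separation axiom is doing its work, so the proof reduces to a clean matching of definitions rather than any genuinely new argument.
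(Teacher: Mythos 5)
The paper itself offers no proof of Proposition \ref{famouschar}: it is imported from \cite{jakl24} (``In \cite{jakl24}, the following, too, is proven''), so there is no in-paper argument to compare yours against. That said, your argument is correct and self-contained. The uniform template is sound: for a filter class $\mathcal{C}$, the intersection $F_a$ of all members of $\mathcal{C}$ containing $\up a$ always contains $\up a$, and any intersection of members of $\mathcal{C}$ equal to $\up a$ can only use members containing $\up a$, so $\up a\in\ca{I}(\mathcal{C})$ iff $F_a\se \up a$. (Your phrase ``largest intersection contained in $\up a$'' should really read ``smallest member of $\ca{I}(\mathcal{C})$ containing $\up a$,'' but the inclusion you actually verify is the right one.) The three unpackings land exactly on the paper's definitions: $\up a\se R_b$ iff $a\ve b=1$ gives the contrapositive of subfitness, and the bijection between completely prime filters and sets $L\sm\da p$ for $p$ prime — which the paper itself uses in Propositions \ref{p:cpexact} and \ref{minimalcpf} — gives the paper's definition of spatiality. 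Two caveats worth flagging: the paper never defines pre-spatiality, so that bullet rests on adopting the standard ``Scott-open filters separate elements'' definition, under which your template applies verbatim (the only fact needed is that $\up a\se F$ iff $a\in F$ for any filter $F$; the aside about Scott-closed complements does no work); and your attention to the empty intersection is vindicated, since the paper explicitly stipulates that $\ca{I}(-)$ contains $L=\up 0$.
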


\subsection{Raney extensions}

This paper is a continuation of \cite{suarez24}: for self-containedness, nonetheless, we report in this subsection all the results from there that we are going to use. A \emph{Raney extension} is a pair $(L,C)$ such that $C$ is a coframe, $L\se C$ is a frame which meet-generates $C$ and such that the subset inclusion preserves the frame operations as well as strongly exact meets. By Proposition \ref{p:pres-se}, for a space $X$ the pair $(\Om(X),\ca{U}(X))$ always is a Raney extension, and this is the main motivating example behind the definition. Several other structures that we have seen are Raney extensions, including:

    \begin{itemize}
    \item The pair $(L,\opp{\fse(L)})$ for any frame $L$;
    \item The pair $(L,\opp{\fe(L)})$ for any frame $L$;
    \item The pair $(L,\opp{\fr(L)})$ for subfit $L$;
    \item The pair $(L,\opp{\ca{I}(\fso(L))})$ for pre-spatial $L$;
    \item The pair $(L,\opp{\ca{I}(\fcp(L))})$ for spatial $L$.
\end{itemize}

Here, we have identified each element of $L$ which its principal filter, a convention which we will continue to use without mention. Because of the isomorphisms in Theorem \ref{eandse}, for any frame $L$ the following embeddings into coframes are Raney extensions, up to isomorphism.
\begin{itemize}
    \item $\op:L\to \So(L)$, 
    \item $\cl:L\to \opp{\Sc(L)}$.
\end{itemize}

For any Raney extension $(L,C)$, by the universal property of the ideal completion of a distributive lattice, there is a coframe surjection $\bwe:\opp{\Fi(L)}\to C$. This map has a left adjoint, and this acts as $c\mapsto \up c\cap L$. From now on, we will denote this map as $\up^L:C\to \opp{\Fi(L)}$. The fixpoints on the coframe of filters are then exactly those of the form $\up^L c$ for some $c\in C$. These form a subcolocale, which we will hereon call $C^*\se \opp{\Fi(L)}$. On the other hand, all elements of $C$ are fixpoints.

\begin{theorem}\label{charC*}
For a Raney extension $(L,C)$, we have an adjunction $\bwe:\opp{\Fi(L)}\lra C:\up^L$, which restricts to a pair of mutually inverse isomorphisms $\bwe:C^*\lra C:\up^L$. These are also isomorphisms of Raney extensions $\bwe:(L,C^*)\lra (L,C):\up^L$.
\end{theorem}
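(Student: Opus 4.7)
My plan is to establish the adjunction first, then recognize $C^*$ as the image of a conucleus, and finally transfer the Raney extension structure across the resulting isomorphism. For the adjunction itself, I would verify it by direct unfolding: the inequality $\up^L c \leq F$ in $\opp{\Fi(L)}$ is the filter inclusion $F \se \up c \cap L$, which says every element of $F$ lies above $c$, equivalently $c \leq \bwe F$ in $C$. Meet-generation of $C$ by $L$ then gives $\bwe(\up^L c) = \bwe(\up c \cap L) = c$, so $\bwe \circ \up^L = \id_C$.

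From this, the composite $j := \up^L \circ \bwe$ is monotone, deflationary (by the adjunction), and idempotent (since $j^2 = \up^L(\bwe \circ \up^L)\bwe = \up^L \bwe = j$). The key further observation is that $j$ preserves finite joins in $\opp{\Fi(L)}$: $\bwe$ does so as a coframe morphism, and $\up^L$ preserves all joins as a left adjoint. Hence $j$ is a conucleus on the coframe $\opp{\Fi(L)}$, and its fixpoint set $C^*$ is therefore a subcolocale. The restrictions $\bwe|_{C^*}:C^* \to C$ and $\up^L:C \to C^*$ — the latter genuinely landing in $C^*$ because $j \circ \up^L = \up^L$ — are mutually inverse monotone bijections between coframes, hence coframe isomorphisms.

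To promote this to an isomorphism of Raney extensions, I would note that each principal filter lies in $C^*$: for $a \in L$, one has $j(\up a) = \up^L(\bwe(\up a)) = \up^L(a) = \up a$. So $L$ sits inside $C^*$ via the principal-filter embedding, and $\bwe$ restricts to the identity on this copy of $L$. The Raney extension axioms for $(L, C^*)$ — meet-generation of $C^*$ by $L$, and preservation of frame operations and strongly exact meets by the embedding — then transfer from $(L, C)$ along the coframe isomorphism $\bwe$, which fixes $L$ pointwise. I expect the main obstacle to be the conucleus step: showing that $j$ preserves finite joins and hence makes $C^*$ a genuine subcolocale (closed under pseudodifferences as well as joins), rather than just a $\bigvee$-closed retract of $\opp{\Fi(L)}$. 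Everything else is essentially bookkeeping once the adjunction and the fixpoint description are in hand.
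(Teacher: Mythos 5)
The paper imports Theorem \ref{charC*} from \cite{suarez24} as background and gives no proof here; the surrounding text only sketches the ingredients you reconstruct, namely the coframe surjection $\bwe:\opp{\Fi(L)}\to C$ from the universal property of the filter completion, its left adjoint $c\mapsto \up c\cap L$, and the fact that the fixpoints form a subcolocale. Your argument is correct and fills in exactly that sketch: the adjunction by unfolding the reverse-inclusion order, $\bwe\circ \up^L=\mathrm{id}_C$ from meet-generation, the identification of $C^*$ as the image of the conucleus $\up^L\circ\bwe$ (whose finite-join preservation, inherited from $\bwe$ being a lattice quotient and $\up^L$ a left adjoint, is indeed the one point needing care), and the transfer of the Raney structure along the isomorphism, which fixes $L$ pointwise since each principal filter $\up a$ is a fixpoint.
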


\begin{corollary}\label{c:=*iso}
    If $(L,C)$ and $(L,D)$ are Raney extensions such that $C^*=D^*$, then the identity on $L$ extends to an isomorphism $(L,C)\cong (L,D)$.
\end{corollary}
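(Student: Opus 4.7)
The plan is to chain together the two isomorphisms provided by Theorem \ref{charC*}. For a Raney extension $(L,C)$, Theorem \ref{charC*} gives a Raney extension isomorphism $\bwe : (L,C^*) \to (L,C)$ with inverse $\up^L$; applying this to both $(L,C)$ and $(L,D)$ yields isomorphisms
\[
(L,C) \;\xrightarrow{\ \up^L\ }\; (L,C^*) \;=\; (L,D^*) \;\xrightarrow{\ \bwe\ }\; (L,D),
\]
where the middle equality uses the hypothesis $C^* = D^*$ (both being the same subcolocale of $\opp{\Fi(L)}$).

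What remains is to check that the composite acts as the identity on $L$. Under the canonical identification of $L$ with its image of principal filters in $\opp{\Fi(L)}$, for any $a \in L$ the element $\up a \in C^*$ maps to $\bwe\, \up a = a$, and conversely $\up^L(a) = \up a \cap L = \up a$. Hence $\bwe \circ \up^L$ restricted to $L \subseteq C$ is the identity, so the composite displayed above extends the identity on $L$ to an isomorphism of Raney extensions $(L,C) \cong (L,D)$.

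There is no real obstacle here: the entire content is already packaged in Theorem \ref{charC*}, and the corollary is simply the observation that the ``canonical form'' $(L,C^*)$ of a Raney extension depends only on the subcolocale $C^* \subseteq \opp{\Fi(L)}$, so any two Raney extensions producing the same $C^*$ are isomorphic over $L$.
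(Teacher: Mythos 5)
Your proposal is correct and is essentially identical to the paper's own proof: both compose the isomorphisms $\up^L:(L,C)\to(L,C^*)$ and $\bwe:(L,D^*)\to(L,D)$ from Theorem \ref{charC*} through the equality $C^*=D^*$. Your additional check that the composite fixes $L$ pointwise (via $\bwe\up a=a$) is a welcome explicit verification of what the paper states without elaboration.
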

\begin{proof}
    Suppose that $L$ is a frame, and that $(L,C)$ and $(L,D)$ are Raney extensions such that $C^*=D^*$. We then $(L,C^*)=(L,D^*)$. Consider, then, the composition of the isomorphisms $\up^L:(L,C)\to (L,C^*)$ and $\bwe:(L,D^*)\to (L,D)$. This, indeed, is an isomorphism which restricts to the identity on $L$.
\end{proof}

For a frame $L$ and a collection of filters $\ca{F}\se \Fi(L)$, we define the following properties for a Raney extension $(L,C)$:
\begin{itemize}
    \item \emph{$\ca{F}$-density}: each element of $C$ is a join of elements of the form $\bwe F$ for some $F\in \ca{F}$;
    \item \emph{$\ca{F}$-compactness}: for all $F\in \ca{F}$ we have $\bwe F\leq a$ implies $a\in F$ for all $a\in L$.
\end{itemize}
Similar properties are introduced for more general extensions of frames in \cite{jakl24}. These are terms are generalizations, and adaptations to the frame case, of the properties \emph{density} and \emph{compactness} of the canonical extension of a distributive lattice: see \cite{gehrke94}, \cite{gehrke01}, and \cite{gehrke08}. A Raney extension is \emph{$\ca{F}$-canonical} if it is both $\ca{F}$-dense and $\ca{F}$-compact. For a Raney extension $(L,C)$, we have that the elements $C^*$ form a closure system on $\Fi(L)$, whose associated closure operator is the composition of the two adjoints $\up^L\circ \bwe$. With this in mind, for a collection $\ca{F}$ of filters we define $\ca{F}^*=\{\up^L \bwe F:F\in \ca{F}\}$.
\begin{proposition}\label{C*canonical}
    For any Raney extension $(L,C)$ and any collection $\ca{F}\se \Fi(L)$,
    \begin{enumerate}
        \item $(L,C)$ is $\ca{F}$-dense if and only if $C^*\se \ca{I}(\ca{F}^*)$;
        \item $(L,C)$ is $\ca{F}$-compact if and only if $\ca{F}\se C^*$.
    \end{enumerate}
    In particular, $(L,C)$ is $\ca{F}$-canonical if and only if $\ca{I}(\ca{F})^{op}=C^*$.
\end{proposition}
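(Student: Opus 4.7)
The plan is to use the adjunction $\up^L \dashv \bwe$ (together with the fact that, by Theorem \ref{charC*}, $\bwe$ restricts to a coframe isomorphism $C^*\cong C$) to transport the conditions of $\ca{F}$-density and $\ca{F}$-compactness back and forth between $C$ and filters of $L$. Throughout, one must bear in mind that joins in $\opp{\Fi(L)}$ are intersections in $\Fi(L)$, so the image under $\up^L$ of a join in $C$ becomes an intersection of filters.

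For part (1), I would first prove the forward direction: if every $c\in C$ is of the form $c=\bve_i \bwe F_i$ with $F_i\in\ca{F}$, then because $\up^L$ is a left adjoint it preserves joins, so
\[
\up^L(c) \;=\; \bve_i \up^L(\bwe F_i) \;=\; \bca_i \up^L(\bwe F_i),
\]
which exhibits $\up^L(c)$ as an intersection of elements of $\ca{F}^*$, and hence $C^*\se\ca{I}(\ca{F}^*)$. For the converse, I would start from an arbitrary $c\in C$, use $\up^L(c)\in C^*\se\ca{I}(\ca{F}^*)$ to write $\up^L(c)=\bca_i \up^L(\bwe F_i)$ with $F_i\in\ca{F}$, and then apply the isomorphism $\bwe:C^*\to C$ of Theorem \ref{charC*}; since this is an order isomorphism it preserves joins, and using $\bwe\circ\up^L=\mb{id}_C$ we recover $c=\bve_i \bwe F_i$.

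For part (2), I would compute $\up^L(\bwe F) = \{x\in L : \bwe F \leq x\}$ directly from the definition $\up^L(c)=\up c\cap L$. This set always contains $F$, and the equality $F=\up^L(\bwe F)$ (that is, $F\in C^*$) is literally the condition that $\bwe F\leq a$ implies $a\in F$ for all $a\in L$. Thus $\ca{F}\se C^*$ if and only if $\ca{F}$-compactness holds. For the final characterisation of $\ca{F}$-canonicity, assume compactness so that every $F\in\ca{F}$ is a fixpoint and hence $\ca{F}^*=\ca{F}$; then density reads $C^*\se \ca{I}(\ca{F})$, while $\ca{F}\se C^*$ together with the fact that $C^*$ is a subcolocale of $\opp{\Fi(L)}$ (closed under joins, i.e., intersections of filters) gives $\ca{I}(\ca{F})\se C^*$, yielding $\ca{I}(\ca{F})^{op}=C^*$. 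The converse is immediate from parts (1) and (2).

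The main subtlety is bookkeeping around the duality: the density hypothesis lives in $C$, but the conclusion about $\ca{I}(\ca{F}^*)$ lives in $\Fi(L)$, with joins in the former corresponding to intersections in the latter. The converse direction of (1) is where one must be most careful, since recovering a join in $C$ from an intersection of filters requires invoking the isomorphism $\bwe|_{C^*}:C^*\to C$ rather than any general preservation property of $\bwe$ on all of $\opp{\Fi(L)}$ (where $\bwe$ only preserves finite joins as a coframe morphism).
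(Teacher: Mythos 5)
Your proof is correct. Note that the paper itself states Proposition \ref{C*canonical} without proof, as a background result recalled from the companion paper \cite{suarez24}, so there is no in-text argument to compare against; but your route via the adjunction $\up^L\dashv\bwe$ is exactly the natural one, and you correctly isolate the one genuine subtlety, namely that in the converse of (1) the join in $C$ must be recovered through the restricted isomorphism $\bwe|_{C^*}:C^*\to C$ (equivalently, by applying $\bwe\circ\up^L=\mathrm{id}_C$ to $\bve_i\bwe F_i$), since $\bwe$ is only a right adjoint and a coframe map on all of $\opp{\Fi(L)}$ and so need not preserve arbitrary joins there.
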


For brevity, in the following we will refer to $\fso(L)$-canonicity simply as $\ca{SO}$-canonicity, and analogously for all other similarly denoted collections of filters, and for density and compactness.

\begin{corollary}\label{c:all-e-comp}
    All Raney extensions are $\ca{E}$-compact and $\ca{R}$-compact.
\end{corollary}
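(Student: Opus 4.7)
The plan is to combine Proposition \ref{C*canonical}(2) with Lemma \ref{principalmin}. Proposition \ref{C*canonical}(2) reduces $\ca{F}$-compactness of $(L,C)$ to the set-theoretic containment $\ca{F} \se C^*$, so it is enough to verify $\fe(L) \se C^*$ and $\fr(L) \se C^*$ for every Raney extension $(L,C)$.

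Theorem \ref{posetofsubloc} already records the inclusion $\fr(L) \se \fe(L)$, so both containments follow once $\fe(L) \se C^*$ is established. For this, I would invoke Lemma \ref{principalmin}, according to which $\fe(L) \se \Fi(L)$ is the smallest sublocale of $\Fi(L)$ containing every principal filter. By Theorem \ref{charC*}, the set $C^*$ is a subcolocale of $\opp{\Fi(L)}$, which by duality is a sublocale of $\Fi(L)$. So the problem reduces to checking that every principal filter of $L$ lies in $C^*$.

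This last step is essentially by construction: the isomorphism $\up^L : C \to C^*$ of Theorem \ref{charC*} restricts along the inclusion $L \se C$ to the map $a \mapsto \up a \cap L = \up a$, so each principal filter of $L$ is the image of some element of $L$ under $\up^L$ and therefore belongs to $C^*$. I do not foresee a genuine obstacle; the only subtle bookkeeping is keeping straight the opposite-duality between the subcolocale $C^* \se \opp{\Fi(L)}$ and the sublocale $C^* \se \Fi(L)$ to which Lemma \ref{principalmin} is applied.
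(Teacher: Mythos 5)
Your proposal is correct and follows essentially the same route as the paper: reduce compactness to the containments $\fe(L)\se C^*$ and $\fr(L)\se C^*$ via Proposition \ref{C*canonical}(2), observe that $C^*$ is a sublocale of $\Fi(L)$ containing all principal filters so that Lemma \ref{principalmin} gives $\fe(L)\se C^*$, and then use $\fr(L)\se\fe(L)$. The paper's proof is just a terser version of this same argument; your extra step making explicit that $\up^L$ sends $a\in L$ to its principal filter is a sound justification of the claim the paper states without comment.
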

\begin{proof}
    For a Raney extension $C^*$, all principal filters of $L$ are in $C^*$, and by Lemma \ref{principalmin}, we must have $\fe(L)\se \ca{U}(X)^*$. The result follows by the characterization in Proposition \ref{C*canonical}. Since $\fr(L)\se \fe(L)$, $\ca{E}$-compactness implies $\ca{R}$-compactness.
\end{proof}

In \cite{suarez24} we use the theory of polarities of Birkhoff to show that, under certain conditions, $\ca{F}$-canonical Raney extensions exist, and that in that case they are unique. Note that a result similar to the following, for more general filter extensions of frames, is proven in \cite{jakl24}.

\begin{theorem}[\cite{suarez24}, Theorem 3.6, see also \cite{jakl24}, Proposition 4.3]\label{containsprincipal}
    For a frame $L$ and any collection $\ca{F}\se \Fi(L)$ of filters, the $\ca{F}$-canonical Raney extension exists if and only if:
    \begin{enumerate}
    \item $\ca{I}(\ca{F})$ contains all principal filters;
    \item $\ca{I}(\ca{F})^{op}\se \Fi(L)^{op}$ is a subcolocale inclusion;
    \item All filters in $\ca{F}$ are strongly exact.
    \end{enumerate}
    In case it exists, it is unique, up to isomorphism. Concretely, it is described as the structure coming from the theory of polarities of Birkhoff, that is, the pair $(L,\opp{\ca{I}(\ca{F})})$.
\end{theorem}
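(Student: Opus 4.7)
The plan is to reduce $\ca{F}$-canonicity of a Raney extension $(L,C)$ to the identity $C^* = \opp{\ca{I}(\ca{F})}$ via Proposition \ref{C*canonical}. Uniqueness is then immediate from Corollary \ref{c:=*iso}, so the real content is existence: one must show that the concrete pair $(L, \opp{\ca{I}(\ca{F})})$, with $L$ identified with principal filters and $\ca{I}(\ca{F})$ ordered by reverse inclusion, is a Raney extension exactly when conditions (1)--(3) hold.

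For \emph{necessity}, assume $(L,C)$ is $\ca{F}$-canonical, so $C^* = \opp{\ca{I}(\ca{F})}$. Condition (1) is immediate: for each $a \in L$ the principal filter ${\uparrow}a$ equals $\up^L(a) \in C^*$. Condition (2) follows from Theorem \ref{charC*}, which asserts that $C^* \se \opp{\Fi(L)}$ is always a subcolocale inclusion. For condition (3), the argument is to show that every $F \in C^*$ is strongly exact as a filter of $L$: given a strongly exact meet $\bwe_i x_i$ in $L$ with all $x_i \in F$, use $\up^L(\bwe F) = F$ together with preservation of strongly exact meets by $L \se C$ to conclude $\bwe F \leq \bwe_i x_i$ in $C$, and hence $\bwe_i x_i \in F$. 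Since $\ca{F} \se \ca{I}(\ca{F})$ and each filter in $\ca{I}(\ca{F})$ is strongly exact, this gives (3).

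For \emph{sufficiency}, assume (1)--(3). By (2), $\opp{\ca{I}(\ca{F})}$ is a coframe as a subcolocale of $\opp{\Fi(L)}$. The assignment $a \mapsto {\uparrow}a$ lands in $\ca{I}(\ca{F})$ by (1), is injective, and preserves the frame operations: arbitrary coframe-joins of principal filters are principal filters of joins (joins in $\opp{\Fi(L)}$ being intersections, which are preserved in any subcolocale), and finite coframe-meets of principal filters are principal filters of meets (requiring (1) to keep the result inside $\ca{I}(\ca{F})$). Meet-generation is automatic, since each $F \in \ca{I}(\ca{F})$ is the coframe-meet of $\{{\uparrow}a : a \in F\}$ computed inside $\opp{\ca{I}(\ca{F})}$. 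The crucial check is preservation of strongly exact meets: if $\bwe_i x_i$ is strongly exact in $L$, then the coframe-meet of $\{{\uparrow}x_i\}$ in $\opp{\ca{I}(\ca{F})}$ is the smallest $F \in \ca{I}(\ca{F})$ containing all $x_i$; by (3), each element of $\ca{F}$ is in $\fse(L)$, hence so is every intersection (since $\fse(L) \se \Fi(L)$ is a sublocale), so $F$ is strongly exact and $\bwe_i x_i \in F$, giving $F = {\uparrow}(\bwe_i x_i)$, which lies in $\ca{I}(\ca{F})$ by (1).

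The main obstacle is the necessity of (3): disentangling the adjunction $\bwe \dashv \up^L$ and the preservation of strongly exact meets built into the definition of a Raney extension in order to show that every filter in $C^*$ --- and in particular every filter of $\ca{F}$ --- is closed under strongly exact meets of its members. Once this is in hand, the sufficiency direction reduces to careful bookkeeping about how meets are computed in the subcolocale $\opp{\ca{I}(\ca{F})} \se \opp{\Fi(L)}$.
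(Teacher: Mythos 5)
This statement is imported background: the paper cites it from \cite{suarez24} (Theorem 3.6) and gives no proof of its own, so there is nothing in-paper to compare your argument against line by line. Judged on its own terms, your reconstruction is correct and matches the intended mechanism. The reduction of $\ca{F}$-canonicity to the identity $C^*=\ca{I}(\ca{F})$ is exactly Proposition \ref{C*canonical}, and uniqueness is then Corollary \ref{c:=*iso}. Your key step for the necessity of (3) is right: for $F\in C^*$ one has $F=\up^L(\bwe F)$, and since the inclusion $L\se C$ computes strongly exact meets of $L$ correctly in $C$, any strongly exact meet of elements of $F$ lies above $\bwe F$ and hence in $F$; combined with $\ca{F}\se C^*$ (which is precisely $\ca{F}$-compactness) this gives (3). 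The sufficiency direction is also sound: the join of $\{\up x_i\}$ inside the sublocale $\ca{I}(\ca{F})\se\Fi(L)$ is the least member of $\ca{I}(\ca{F})$ containing all $x_i$, every member of $\ca{I}(\ca{F})$ is strongly exact because $\fse(L)$ is closed under intersections, and condition (1) then forces this join to be $\up(\bwe_i x_i)$. Two things you leave implicit but should record: first, after verifying that $(L,\opp{\ca{I}(\ca{F})})$ is a Raney extension you still need the one-line computation $\up^L F=F$ for $F\in\ca{I}(\ca{F})$, which gives $C^*=\ca{I}(\ca{F})$ and hence $\ca{F}$-canonicity of the constructed pair via Proposition \ref{C*canonical} (being a Raney extension is not by itself the conclusion); second, in the necessity of (2) the fact that $C^*\se\opp{\Fi(L)}$ is a subcolocale is itself a nontrivial input (asserted alongside Theorem \ref{charC*}), not something your argument establishes. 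Neither is a gap in substance.
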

A morphism $f:(L,C)\to (M,D)$ of Raney extensions is a coframe map $f:C\to D$ such that $f(a)\in M$ whenever $a\in L$ and such that it preserves the frame operations of $L$. The category of Raney extensions is called $\bd{Raney}$. Some frame maps can be extended to morphisms between Raney extensions on them.

\begin{theorem}\label{whenliftsfilters}\label{whenlifts}
    Suppose that $f:L\ra M$ is a frame map and that we have Raney extensions $(L,C)$ and $(M,D)$. The frame map extends to a map of Raney extensions if and only if $f^{-1}(F)\in C^*$ for every $F\in D^*$. If this map exists, it is $\clo{D^*}(f[-])$. 
\end{theorem}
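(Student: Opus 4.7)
The plan is to exploit the isomorphism $(L,C)\cong (L,C^*)$ of Theorem \ref{charC*}, under which $c\in C$ corresponds to the filter $\up^L c\in C^*$. Any Raney-extensions morphism $\tilde f:(L,C)\to (M,D)$ is a coframe map restricting to $f$ on the meet-generating subframe $L$, so it is forced to satisfy
\[
\tilde f(c)=\tilde f\bigl(\bwe \up^L c\bigr)=\bwe f[\up^L c],
\]
which on the $D^*$-side reads $\up^M \bwe f[\up^L c]=\clo{D^*}(f[\up^L c])$. So the candidate extension is uniquely forced, and the proof splits into two tasks: verifying necessity of $f^{-1}(F)\in C^*$ for every $F\in D^*$, and, conversely, checking that under this hypothesis the formula above does define a coframe map $C\to D$ restricting to $f$ on $L$.

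For necessity I would use the left adjoint $\tilde f^{\flat}:D\to C$ of the coframe map $\tilde f$. Writing $F=\up^M d$ with $d\in D$, for $x\in L$ the chain of equivalences
\[
x\in f^{-1}(F)\iff d\leq f(x)=\tilde f(x)\iff \tilde f^{\flat}(d)\leq x
\]
identifies $f^{-1}(F)$ with $\up^L \tilde f^{\flat}(d)$, which lies in $C^*$ by definition.

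For sufficiency, define $\tilde f(c):=\bwe f[\up^L c]$. Restriction to $f$ on $L$ is immediate because $x=\min \up^L x$, and this immediately gives that $\tilde f$ sends $L$ into $M$ and preserves the frame operations of $L$. The bottom is preserved because $\up^L 0_C=L$ contains $0_L$ and $f(0_L)=0_D$; preservation of binary joins uses the identity $\up^L(c\vee c')=\{x\vee y:x\in\up^L c,\,y\in\up^L c'\}$ together with the coframe distributivity of $D$ and the fact that $f$ preserves binary joins.

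The main obstacle is showing that $\tilde f$ preserves arbitrary meets. The easy inclusion $\bwe_i\tilde f(c_i)\geq \tilde f(\bwe_i c_i)$ follows from $\bcu_i \up^L c_i\se \up^L(\bwe_i c_i)$, with no hypothesis needed. For the reverse, set $d:=\bwe_i\tilde f(c_i)\in D$. The hypothesis gives $f^{-1}(\up^M d)\in C^*$, and therefore $f^{-1}(\up^M d)=\up^L e$ with $e=\bwe f^{-1}(\up^M d)\in C$. Since $x\in f^{-1}(\up^M d)$ iff $d\leq f(x)$, the definition of $d$ yields at once $\bcu_i \up^L c_i\se f^{-1}(\up^M d)=\up^L e$, which forces $e\leq c_i$ for every $i$, hence $e\leq \bwe_i c_i$. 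This gives $\up^L(\bwe_i c_i)\se \up^L e=f^{-1}(\up^M d)$, so every $x\in \up^L(\bwe_i c_i)$ satisfies $d\leq f(x)$, and we conclude $\tilde f(\bwe_i c_i)\geq d=\bwe_i\tilde f(c_i)$.
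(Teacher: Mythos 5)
The paper gives no proof of this theorem: it sits in the background subsection on Raney extensions among the results imported verbatim from \cite{suarez24}, so there is no internal argument to compare yours against. On its own terms your proof is correct and complete. The forced form $\tilde f(c)=\bwe f[\up^L c]$ does follow from meet-preservation together with $c=\bwe\up^L c$; your necessity argument correctly identifies $f^{-1}(\up^M d)$ with $\up^L(\tilde f^{\flat}(d))$, which lies in $C^*$ by definition of $C^*$; the identity $\up^L(c\ve c')=\{x\ve y:x\in\up^L c,\ y\in\up^L c'\}$ combined with the coframe distributive law in $D$ does yield preservation of finite joins; and the only genuinely delicate step, the inequality $\bwe_i\tilde f(c_i)\leq\tilde f(\bwe_i c_i)$, is precisely where the hypothesis that preimages of filters in $D^*$ land in $C^*$ must enter, and your use of it --- writing $f^{-1}(\up^M d)=\up^L e$ with $e=\bwe f^{-1}(\up^M d)$, deducing $e\leq c_i$ for all $i$ from $\up^L c_i\se\up^L e$, hence $e\leq\bwe_i c_i$ and $d\leq\tilde f(\bwe_i c_i)$ --- is sound. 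Since $\tilde f$ restricts to the given frame map $f$ on $L$, it sends $L$ into $M$ and preserves the frame operations there, so it is a morphism of Raney extensions, and your translation of $\bwe f[\up^L c]$ into $\clo{D^*}(f[\up^L c])$ under the isomorphism of Theorem \ref{charC*} matches the formula in the statement.
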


There is a natural order on Raney extensions on a frame $L$, that is, subcolocale inclusion of their coframe component. If we order them this way, we obtain the following.
\begin{theorem}\label{Rboundaries}
    For a frame $L$, the collection of Raney extensions over $L$ is isomorphic to the section $[\fe(L),\fse(L)]$ of the coframe of sublocales of $\fse(L)$.
\end{theorem}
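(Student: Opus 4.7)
The plan is to exhibit the isomorphism explicitly as the assignment $\Phi:(L,C)\mapsto C^*$ from Theorem \ref{charC*}, where $C^*$ is regarded as a sublocale of $\Fi(L)$ via the standard identification between subcolocales of $\Fi(L)^{op}$ and sublocales of $\Fi(L)$ (nuclei on $\Fi(L)$ and co-nuclei on $\Fi(L)^{op}$ coincide on underlying subsets). I then construct an inverse $\Psi$ sending $S\in[\fe(L),\fse(L)]$ to the Raney extension $(L,S^{op})$ provided by Theorem \ref{containsprincipal}, and check order preservation.

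For the first step, I need to show that $C^*$ lands in $[\fe(L),\fse(L)]$. The lower bound $\fe(L)\se C^*$ is obtained by observing that every principal filter sits in $C^*$: for $a\in L$, we have $\bwe \up a = a$ in $C$ (since $a$ itself is the minimum of $\up a\cap L$), whence $\up^L(a)=\up a$, so the principal filter $\up a$ is a fixed point of $\up^L\circ\bwe$. Since $C^*$ is a sublocale of $\Fi(L)$ containing every principal filter, Lemma \ref{principalmin} forces $\fe(L)\se C^*$. For the upper bound $C^*\se \fse(L)$, I take $F\in C^*$ and a strongly exact meet $\bwe_i x_i$ in $L$ with each $x_i\in F$. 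Preservation of strongly exact meets by $L\se C$ gives that this meet has the same value computed in $C$ as in $L$. Then $\bwe F \leq x_i$ for all $i$ yields $\bwe F \leq \bwe_i x_i$ in $C$, and since $\bwe_i x_i\in L$, it belongs to $\up^L(\bwe F)=F$, so $F$ is strongly exact.

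For the inverse direction, given $S\in[\fe(L),\fse(L)]$, I apply Theorem \ref{containsprincipal} with $\ca{F}=S$: closure under arbitrary intersections gives $\ca{I}(S)=S$, which contains all principal filters via $\fe(L)\se S$; the subcolocale condition on $S^{op}\se\Fi(L)^{op}$ is precisely sublocalehood of $S$ in $\Fi(L)$; and $S\se\fse(L)$ makes every filter in $S$ strongly exact. The theorem produces the $S$-canonical Raney extension $(L,S^{op})$, for which the associated subcolocale is evidently $S$ itself. Hence $\Phi\circ\Psi=\mi{id}$; the other composite sends $(L,C)$ to $(L,(C^*)^{op})$, which is isomorphic to $(L,C)$ by Theorem \ref{charC*}, and Corollary \ref{c:=*iso} guarantees well-definedness on isomorphism classes. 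Order preservation is immediate since, by definition, the order on Raney extensions is subcolocale inclusion of the coframe component, and Theorem \ref{charC*} translates this into inclusion of $C^*$ in $\Fi(L)^{op}$, matching the sublocale order on $[\fe(L),\fse(L)]$. The principal bookkeeping hurdle is the identification of sublocales of $\fse(L)$ with sublocales of $\Fi(L)$ contained in $\fse(L)$ (an instance of transitivity of sublocalehood); once this is noted, every verification collapses to a direct appeal to the cited results.
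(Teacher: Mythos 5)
The paper itself gives no proof of Theorem \ref{Rboundaries}: it is quoted verbatim in the background section as a result imported from \cite{suarez24}, so there is no in-paper argument to compare against. Judged on its own, your reconstruction is correct and is exactly the argument the paper's toolkit is set up to support: $\fe(L)\se C^*$ via $\up^L(a)=\up a$ for $a\in L$ together with Lemma \ref{principalmin} (this is precisely how Corollary \ref{c:all-e-comp} is argued), $C^*\se\fse(L)$ via preservation of strongly exact meets by $L\se C$ and $F=\up(\bwe F)\cap L$, the inverse via Theorem \ref{containsprincipal} (whose three hypotheses you verify correctly, using that meets in $\Fi(L)$ are intersections so $\ca{I}(S)=S$ for a sublocale $S$), and the two composites via $(\ca{F}^{op})^*=\ca{F}$ and Theorem \ref{charC*}. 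The identification of sublocales of $\fse(L)$ with sublocales of $\Fi(L)$ contained in $\fse(L)$ is indeed just transitivity of sublocale inclusions. The only point you wave at rather than check is well-definedness of $\Phi$ on isomorphism classes, i.e.\ that an isomorphism $(L,C)\to(L,D)$ fixing $L$ forces $C^*=D^*$; this is a one-line computation ($\up^L(f(c))=\{a\in L:c\leq f^{-1}(a)=a\}=\up^L(c)$) and Corollary \ref{c:=*iso} only gives the converse, so it is worth writing down, but it is not a genuine gap.
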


\section{Spectra of Raney extensions}\label{S2}

In this section, we extend the category of $T_0$ topological spaces to a pointfree category. We do so by showing that there is an adjunction between $\opp{\bd{Raney}}$ and $\bd{Top}$. Firstly, we will define the spectrum of a Raney extension. For any coframe $C$, we define $\rpt(C)$ to be the collection of its completely join-prime elements. For a Raney extension $(L,C)$, let us define the function $\varphi_{(L,C)}:C\ra \ca{P}(\rpt(C))$ as
\[
\varphi_{(L,C)}(a)=\{x\in \rpt(C):x\leq a\}.
\]
It is easy to see that the following two properties hold:
\begin{enumerate}
    \item $\varphi_{(L,C)}(\bwe_i a_i)=\bca_i\varphi_{(L,C)}( a_i)$,
    \item $\varphi_{(L,C)}(\bve_i a_i)=\bigcup_i \varphi_{(L,C)}(a_i)$,
\end{enumerate}
for each family $a_i\in L$. When the Raney extension $(L,C)$ is clear from the context, we will often abbreviate $\varphi_{(L,C)}$ as simply $\varphi$. By property 2, we have that the elements of the form $\varphi_{(L,C)}(a)$ for $a\in L$ form a topology. We denote the topological space obtained by equipping the set $\rpt(C)$ with this topology as $\rpt(L,C)$, and we call it the \emph{spectrum} of the Raney extension $(L,C)$. Since all elements of $C$ are meets of elements of $L$, from property 1 it follows that the elements of the form $\varphi_{(L,C)}(c)$ with $c\in C$ are the saturated sets of this space. Let us show functoriality of the assignment $(L,C)\mapsto \rpt(L,C)$. 

\begin{lemma}\label{cpcjp}
For a Raney extension $(L,C)$, an element $x\in C$ is completely join-prime if and only if $\up^{L} x$ is a completely prime filter.
\end{lemma}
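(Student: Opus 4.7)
The plan is to establish both directions by unfolding the relevant definitions, relying on two basic facts about Raney extensions: the inclusion $L\hookrightarrow C$ preserves joins, so a join of elements of $L$ computed in $C$ coincides with the one in $L$, and $L$ meet-generates $C$, so every $c\in C$ equals $\bwe\up^L c$.

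For the forward direction, $\up^L x=\up x\cap L$ is evidently a filter, so I would only need to check complete primality: if $\bve_i a_i\in\up^L x$ with $a_i\in L$, then $x\leq\bve_i a_i$, and since the join agrees whether computed in $L$ or in $C$, complete join-primality of $x$ in $C$ delivers some $a_j\geq x$, i.e.\ $a_j\in\up^L x$. This is the cheap half of the argument.

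For the converse, assuming $\up^L x$ is a completely prime filter and $x\leq\bve_i c_i$ in $C$, I would argue by contradiction, supposing $x\nleq c_i$ for every $i$. Writing $c_i=\bwe\{a\in L:c_i\leq a\}$ using meet-generation, the assumption $x\nleq c_i$ lets me choose some $a_i\in L$ with $c_i\leq a_i$ and $x\nleq a_i$. Then $\bve_i c_i\leq\bve_i a_i$, so $x\leq\bve_i a_i$; this join lies in $L$, hence $\bve_i a_i\in\up^L x$, and complete primality of the filter yields some $a_j\in\up^L x$, contradicting $x\nleq a_j$. So $x\leq c_j$ for some $j$, witnessing complete join-primality of $x$ in $C$.

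The step most likely to require care is the use of meet-generation in the converse to extract the separating elements $a_i\in L$; the forward direction is essentially immediate from join-preservation. An alternative route would go through the adjunction $\up^L\dashv\bwe$ of Theorem \ref{charC*}, which converts joins in $C$ to intersections of filters, and invoke the equivalence between completely prime filters and completely meet-prime elements of $\Fi(L)$ from the footnote; but the direct contradiction argument above seems cleanest and avoids passing through $\opp{\Fi(L)}$.
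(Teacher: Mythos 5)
Your proof is correct, and the nontrivial direction takes a genuinely different route from the paper's. The forward direction is the same in both (the paper just declares it immediate). For the converse, the paper stays at the level of the filter lattice: from $x\leq\bve D$ it deduces $\up^L\bve D\se\up^L x$, rewrites $\up^L\bve D$ as $\bca\{\up^L d:d\in D\}$, and then uses the fact that a completely prime filter is a completely prime \emph{element} of $\Fi(L)$ (stated in a footnote of the paper), so that some $\up^L d\se\up^L x$ and hence $x\leq d$ by meet-generation. You instead argue by contradiction at the level of elements: meet-generation supplies, for each $c_i$ with $x\nleq c_i$, a separating $a_i\in L$ with $c_i\leq a_i$ and $x\nleq a_i$, and join-preservation of the inclusion $L\se C$ lets you feed $\bve_i a_i\in L$ into the join-based definition of complete primality of $\up^L x$. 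Your argument is more self-contained, since it never invokes the equivalence between ``completely prime filter'' and ``completely prime element of $\Fi(L)$''; the paper's is shorter once that equivalence and the identity $\up^L\bve D=\bca\{\up^L d:d\in D\}$ are taken as given. The alternative route you sketch at the end is essentially the paper's actual proof. Both arguments are sound, including the degenerate empty-join case, which is handled automatically on each side.
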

\begin{proof}
It is immediate that if $x\in C$ is completely join-prime then $\up^L x$ is completely prime. For the converse, suppose that we have $x\in C$ such that $\up^{L}x$ is completely prime. Suppose that $x\leq \bve D$ for $D\se C$. This means that $\up^L \bve D\se \up^L x$. Observe that $\up^L \bve D=\bigcap \{\up^L d:d\in D\}$. As $\up^L x$ is assumed to be completely prime, there must be some $d\in D$ such that $\up^L d\se \up^L x$. This implies that $x\leq d$.
\end{proof}

\begin{lemma}\label{respectscjp}
For a morphism $f:(L,C)\ra (M,D)$ of Raney extensions, if $x\in \rpt(D)$ then $f^*(x)\in \rpt(C)$. 
\end{lemma}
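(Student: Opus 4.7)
The plan is to reduce the statement to the well-known fact that preimages of completely prime filters under frame maps are completely prime, using Lemma \ref{cpcjp} to translate between completely join-prime elements of a Raney extension's coframe and completely prime filters on the underlying frame.

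First, I would unpack both sides using Lemma \ref{cpcjp}. Assuming $x\in\rpt(D)$, this lemma says that $\up^M x$ is a completely prime filter of $M$. Dually, to prove $f^*(x)\in \rpt(C)$ it suffices, by the same lemma, to prove that $\up^L f^*(x)$ is a completely prime filter of $L$.

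Next, I would identify $\up^L f^*(x)$ via the adjunction $f^*\dashv f$. Concretely, for every $a\in L$ one has $f^*(x)\leq a$ iff $x\leq f(a)$, so
\[
\up^L f^*(x)=\{a\in L:x\leq f(a)\}=f^{-1}(\up^M x),
\]
where $f$ is now viewed as the restriction $L\to M$, which by the definition of a morphism of Raney extensions is a frame map.

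The last step is the routine verification that if $g:L\to M$ is a frame map and $F\subseteq M$ is a completely prime filter, then $g^{-1}(F)$ is a completely prime filter of $L$: closure under finite meets and upward closure follow from $g$ preserving finite meets and being monotone, properness from $g(1)=1\in F$ and $g(0)=0\notin F$, and complete primeness from $g$ preserving arbitrary joins, since $\bve_i a_i\in g^{-1}(F)$ gives $\bve_i g(a_i)=g(\bve_i a_i)\in F$ and hence $g(a_i)\in F$, i.e. $a_i\in g^{-1}(F)$, for some $i$. Applying this to $g=f|_L$ and $F=\up^M x$, combined with the two previous steps, yields the claim.

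I do not anticipate any serious obstacle: all the machinery (Lemma \ref{cpcjp}, the adjunction $f^*\dashv f$, and the fact that morphisms of Raney extensions restrict to frame maps on the underlying frames) is already in hand, and the argument is essentially a transport-of-structure calculation along these data.
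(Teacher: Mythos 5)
Your proof is correct and follows essentially the same route as the paper's: both reduce via Lemma \ref{cpcjp} to showing $\up^L f^*(x)$ is completely prime, and both use the adjunction $f^*\dashv f$ together with the fact that $f$ preserves the frame operations on $L$; the paper just carries out the complete-primeness check directly with the join-primality of $x$ in $D$, whereas you route it through the identification $\up^L f^*(x)=f|_L^{-1}(\up^M x)$ and the standard preimage fact, which is the same computation.
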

\begin{proof}
By Lemma \ref{cpcjp}, it suffices to show that for a morphism $f:(L,C)\ra (M,D)$ of Raney extensions, if $x\in \rpt(D)$ then $\up^L f^*(x)$ is a completely prime filter of $L$. If $f^*(x)\leq \bve A$ for $A\se L$, then as $f$ respects the frame operations of $L$, and because $f^*\dashv f$, we have that $x\leq \bve \{f(a):a\in A\}$. Since $x$ is completely join-prime, there is some $a\in A$ such that $x\leq f(a)$, that is $f^*(x)\leq a$.
\end{proof}

\begin{lemma}
The assignment $\rpt:(L,C)\mapsto \rpt(L,C)$ is the object part of a functor $\rpt:\opp{\bd{Raney}}\ra \bd{Top}$ which acts on morphisms as $f\mapsto f^*$.
\end{lemma}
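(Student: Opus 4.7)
The plan is to verify three things: (i) that for each morphism $f:(L,C)\to (M,D)$ the map $f^*$ restricts to a well-defined function $\rpt(D)\to \rpt(C)$, (ii) that this restriction is continuous with respect to the spectral topologies, and (iii) that the assignment respects identities and composition contravariantly.

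For (i), this is already handled by Lemma \ref{respectscjp}, which guarantees that $f^*$ sends completely join-prime elements of $D$ to completely join-prime elements of $C$. So we get a set-theoretic map $f^*:\rpt(D)\to \rpt(C)$.

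For (ii), the main step, I would check that the preimage of every basic open is open. A basic open of $\rpt(L,C)$ has the form $\varphi_{(L,C)}(a)$ for $a\in L$. Using that $f^*\dashv f$ at the coframe level, I compute
\[
(f^*)^{-1}(\varphi_{(L,C)}(a)) = \{x\in \rpt(D) : f^*(x)\leq a\} = \{x\in \rpt(D): x\leq f(a)\} = \varphi_{(M,D)}(f(a)).
\]
Since $f$ is a morphism of Raney extensions, $f(a)\in M$, so $\varphi_{(M,D)}(f(a))$ is by definition open in $\rpt(M,D)$. Thus $f^*$ is continuous.

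For (iii), the identity morphism on $(L,C)$ is a coframe map whose left adjoint is again the identity, so $(\mathrm{id}_{(L,C)})^* = \mathrm{id}_{\rpt(L,C)}$. For composition, given $f:(L,C)\to (M,D)$ and $g:(M,D)\to (N,E)$, the left adjoint of $g\circ f$ is $f^*\circ g^*$ by the standard fact that adjoints of a composite are the composite of adjoints in reverse order; this gives the required contravariance $(g\circ f)^* = f^*\circ g^*$, matching the direction $\opp{\bd{Raney}}\to \bd{Top}$. I do not expect any real obstacle: the only non-automatic step is the continuity verification, and that reduces to the adjunction identity $f^*(x)\leq a \iff x\leq f(a)$, combined with the morphism condition that $f$ takes elements of $L$ into $M$.
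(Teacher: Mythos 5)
Your proposal is correct and follows essentially the same route as the paper: well-definedness via Lemma \ref{respectscjp} and continuity via the adjunction computation $(f^*)^{-1}(\varphi(a))=\{x:f^*(x)\leq a\}=\{x:x\leq f(a)\}=\varphi(f(a))$, using that $f(a)\in M$. The only difference is that you also spell out the (routine) preservation of identities and composition, which the paper leaves implicit.
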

\begin{proof}
That every morphism is mapped to a well-defined function between the set of points follows from Lemma \ref{respectscjp}. Continuity follows from the fact that the $f^*$-preimage of some $\varphi(a)$ for $a\in L$ is, expanding definitions,
\begin{gather*}
   \{x\in \rpt(D):f^*(x)\leq a\}=\\
   \{x\in \rpt(D):x\leq f(a)\}=\varphi(f(a)),
\end{gather*}
and this set is indeed open in $\rpt(D)$ as by definition of Raney morphism $f(a)\in M$.
\end{proof}

By Theorem \ref{charC*}, for every Raney extension $(L,C)$, its coframe component $C$ can be identified with a collection of filters of $L$. Let us now see how to describe the spectrum of a Raney extension, under this identification. 

\begin{theorem}\label{t:pointsfilters}
    For a frame $L$ and for a sublocale $\ca{F}\se \Fi(L)$ such that it contains all principal filters, we have $\pt_R(\ca{F}^{op})=\fcp(L)\cap \ca{F}$. 
    
\end{theorem}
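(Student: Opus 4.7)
The plan is to reduce the identification of points to Lemma \ref{cpcjp} after unpacking what the left adjoint $\up^L:\ca{F}^{op}\to \opp{\Fi(L)}$ is for the Raney extension $(L,\ca{F}^{op})$. Because $\ca{F}$ contains all principal filters, the embedding $L\hookrightarrow \ca{F}^{op}$ is $a\mapsto \up a$. Using the description $c\mapsto \up c\cap L$ from just before Theorem \ref{charC*}, for a filter $F\in\ca{F}^{op}$ one computes
\[
\up^L F=\{a\in L:\up a\geq F\text{ in }\ca{F}^{op}\}=\{a\in L:\up a\se F\}=F.
\]
Hence $\up^L$ is just the set-theoretic inclusion $\ca{F}\hookrightarrow \Fi(L)$, with the orders of both sides reversed.

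With this identification in hand, Lemma \ref{cpcjp} says that $F\in \ca{F}^{op}$ is completely join-prime precisely when $\up^L F=F$ is a completely prime filter of $L$, which gives the equality of underlying sets $\rpt(\ca{F}^{op})=\fcp(L)\cap\ca{F}$. The topologies agree as well: the defining opens of $\rpt(L,\ca{F}^{op})$ are $\varphi(a)=\{F\in\rpt(\ca{F}^{op}):F\leq \up a\text{ in }\ca{F}^{op}\}$ for $a\in L$, and unfolding $F\leq \up a$ in $\ca{F}^{op}$ as $\up a\se F$, i.e.\ as $a\in F$, shows that these are precisely the traces on $\fcp(L)\cap \ca{F}$ of the basic opens $\{P\in \fcp(L):a\in P\}$ of the classical spectrum $\pt(L)$.

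The only real difficulty is carefully tracking the order reversals between $\Fi(L)$ (filter inclusion), $\ca{F}$ (its sublocale, same order), and the coframes $\opp{\Fi(L)}$ and $\ca{F}^{op}$, so that the adjunction $\bwe\dashv \up^L$ of Theorem \ref{charC*} correctly collapses to the identity inclusion on underlying sets; once this bookkeeping is in place, the statement is essentially a one-line application of Lemma \ref{cpcjp}.
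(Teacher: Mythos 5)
Your proof is correct and is essentially the paper's own argument: the paper directly verifies that an element of $\ca{F}$ is completely join-prime in $\ca{F}^{op}$ if and only if it is a completely prime filter of $L$, using that $\ca{F}$ contains all principal filters, and this is precisely the content of your computation $\up^L F=F$ combined with Lemma \ref{cpcjp} (your remarks on the topology correspond to what the paper defers to Corollary \ref{c:pointsfilters}). The one point worth flagging is that Lemma \ref{cpcjp} is stated for Raney extensions, while here $\ca{F}$ is only a sublocale of $\Fi(L)$ containing the principal filters, so $(L,\ca{F}^{op})$ need not be a Raney extension (that would require $\ca{F}\se \fse(L)$ by Theorem \ref{Rboundaries}); this is harmless because the lemma's proof uses only the adjunction $\up^L\dashv\bwe$ and the fact that elements of $\ca{F}^{op}$ are fixpoints, both of which are available in this generality, but you should say so explicitly rather than cite the lemma as stated.
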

\begin{proof}
   We show that an element $P\in \ca{F}$ is completely prime in the frame $\ca{F}$ if and only if it is completely prime as an element of $\Fi(L)$. If an element $P\in \ca{F}$ is completely prime in the frame $\Fi(L)$, then it is also completely prime as an element of $\ca{F}$, as meets of elements of $\ca{F}$ are a subset of all the meets in $\Fi(L)$. For the converse, suppose that $P$ is completely prime in $\ca{F}$, and that $\bve_i x_i\in P$ for some collection $x_i\in L$. This means $\bca_i \up x_i\se P$, and because $\ca{F}$ contains all principal filters and by assumption on $P$, we have $\up x_i\se P$ for some $i\in I$. 
\end{proof}

\begin{corollary}\label{c:pointsfilters}
   A Raney extension $(L,C^*)$ has as points the elements of $C^*\cap \fcp(L)$, and as opens the sets of the form $\{P\in \fcp(L)\cap C^*:a\in P\}$ for some $a\in L$.
\end{corollary}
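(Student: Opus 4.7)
The plan is to obtain the corollary as an essentially immediate consequence of Theorem \ref{t:pointsfilters}, after making the identification of the coframe component with a subcolocale of $\opp{\Fi(L)}$. The only real work is translating the description of the open sets across the isomorphism $\up^L\colon C\to C^*$ and keeping careful track of the order convention.

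First I would invoke Theorem \ref{charC*} to observe that $\up^L\colon (L,C)\to (L,C^*)$ is an isomorphism of Raney extensions, so the two have the same spectrum. Viewing $C^*$ as a subcolocale of $\opp{\Fi(L)}$, it corresponds to a sublocale $\ca{F}\se \Fi(L)$ with $C^*=\ca{F}^{op}$, and this $\ca{F}$ contains every principal filter: each $a\in L\se C$ satisfies $\up^L a=\{b\in L:a\leq b\}=\up a$, so $\up a\in C^*$. Thus the hypotheses of Theorem \ref{t:pointsfilters} are met, and it yields the set-level equality $\rpt(L,C^*)=\fcp(L)\cap C^*$.

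To identify the open sets, I would trace the basic opens $\varphi_{(L,C)}(a)=\{x\in\rpt(C):x\leq a\}$, for $a\in L$, through the isomorphism $\up^L$. By Lemma \ref{cpcjp}, completely join-prime elements $x\in C$ correspond bijectively to completely prime filters of the form $\up^L x\in C^*\cap \fcp(L)$. Since $\up^L$ is an order isomorphism and the order on $C^*$ is inherited from $\opp{\Fi(L)}$ (i.e.\ reverse inclusion), the inequality $x\leq a$ in $C$ becomes $\up^L x \leq_{C^*} \up^L a=\up a$, which unwinds to $\up a\se \up^L x$, namely $a\in \up^L x$. Writing $P=\up^L x$, this shows $\varphi_{(L,C)}(a)$ corresponds to $\{P\in \fcp(L)\cap C^*:a\in P\}$, as claimed.

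No serious obstacle is anticipated; the only delicate point is being careful that $C^*$ is a subcolocale of $\opp{\Fi(L)}$ and therefore ordered by reverse inclusion, so that the direction of the inequalities in the definition of $\varphi_{(L,C)}$ translates correctly into filter membership.
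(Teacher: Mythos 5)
Your proposal is correct and follows essentially the same route as the paper: the first claim is read off from Theorem \ref{t:pointsfilters} (after checking $C^*$ contains all principal filters), and the second is just unravelling $\varphi_{(L,C^*)}(a)=\{P:P\leq \up a\}=\{P:a\in P\}$ under the reverse-inclusion order on $C^*$. Your extra detour through the isomorphism $\up^L:(L,C)\to(L,C^*)$ is harmless but not needed, since the corollary is stated directly for a Raney extension already presented as $(L,C^*)$.
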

\begin{proof}
    The first part of the statement is a direct consequence of Theorem \ref{t:pointsfilters}. For the second part of the statement, it suffices to unravel the definition of the topology on $\rpt(L,C^*)$.
\end{proof}

We now define the left adjoint to $\rpt$. For a topological space $X$ we define $\Om_R(X)$ as the pair $(\Om(X),\ca{U}(X))$, we extend the assignment to morphisms as $f\mapsto f^{-1}$.

\begin{lemma}\label{raneyrefl}
For every Raney extension $(L,C)$ there is a surjective map of Raney extensions $\varphi_{(L,C)}:(L,C)\ra \Om_R(\rpt(L,C))$. This is an isomorphism precisely when $C$ is join-generated by its completely join-prime elements.
\end{lemma}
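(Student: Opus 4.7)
The plan is first to verify that $\varphi_{(L,C)}$ is a well-defined surjective morphism of Raney extensions, and then to characterize when it is an isomorphism via join-density of $\rpt(C)$ in $C$. For well-definedness, I would start by checking that $\varphi$ maps into the correct components. By construction of the topology on $\rpt(L,C)$, each $\varphi(a)$ with $a\in L$ is open, so $\varphi$ sends $L$ into $\Om(\rpt(L,C))$. For a general $c\in C$, meet-generation by $L$ gives $c=\bwe_i a_i$ with $a_i\in L$, and property 1 listed directly after the definition of $\varphi$ yields $\varphi(c)=\bca_i \varphi(a_i)$; this is an intersection of opens, hence saturated, so $\varphi$ also maps $C$ into $\ca{U}(\rpt(L,C))$. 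Properties 1 and 2 express the preservation of all meets and all joins respectively, from which it follows that $\varphi:C\to \ca{U}(\rpt(L,C))$ is a coframe map whose restriction to $L$ is a frame map, i.e., a morphism of Raney extensions.

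Surjectivity is then almost formal: every open is by definition $\varphi(a)$ for some $a\in L$, and every saturated set $\bca_i\varphi(a_i)$ coincides with $\varphi(\bwe_i a_i)$, where $\bwe_i a_i\in C$, by property 1. For the isomorphism characterization I would use that $\varphi$ preserves all joins and meets, so injectivity is equivalent to order-reflection. Suppose $C$ is join-generated by its completely join-prime elements; then $c=\bve\{x\in \rpt(C):x\leq c\}$ for every $c\in C$. If $\varphi(c)\se\varphi(d)$, the inclusion $\{x\in \rpt(C):x\leq c\}\se \{x\in \rpt(C):x\leq d\}$ passes to joins to give $c\leq d$, so $\varphi$ is order-reflecting and hence an isomorphism. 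Conversely, if $\varphi$ is an iso, transporting along it reduces the claim to showing that $\ca{U}(\rpt(L,C))$ is join-generated by its completely join-primes, which is standard: the completely join-prime elements of an upset lattice $\ca{U}(Y)$ are exactly the specialization-upsets of its points, and every upset is the union of these over its members.

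I expect the main obstacle to be the converse direction of the isomorphism characterization, which requires the small auxiliary lemma that the completely join-primes of $\ca{U}(Y)$ are precisely the principal specialization-upsets of points, together with the transfer of join-generation across $\varphi$. Everything else is a bookkeeping exercise built on top of properties 1 and 2 and the definition of the topology on $\rpt(L,C)$.
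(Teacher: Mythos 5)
Your proposal is correct and follows essentially the same route as the paper: surjectivity and the morphism property come from the meet- and join-preservation properties of $\varphi_{(L,C)}$ together with meet-generation of $C$ by $L$, and the isomorphism criterion reduces to order-reflection, which is equivalent to join-generation by the completely join-primes. The only cosmetic difference is that for the converse you transport join-generation back from $\ca{U}(\rpt(L,C))$ through the isomorphism, whereas the paper argues directly that injectivity is equivalent to separation of $c\nleq d$ by points; both arguments are sound and of the same length.
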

\begin{proof}
    The fact that it is a surjection and a map of Raney extensions follows from properties 1 and 2 of the topologizing map $\varphi_{(L,C)}$. The map is an isomorphism precisely when it is injective, and this happens exactly when for $c,d\in C$ such that $c\nleq d$ there is some $x\in \pt_R(C)$ such that $x\leq c$ and $x\nleq d$. This holds if and only if the completely join-prime elements join-generate $C$.
\end{proof}

The map we have just defined will be the evaluation at an object of the natural transformation $\Om_R\circ \rpt\Rightarrow 1_{\opp{\bd{Raney}}}$. Let us now define the other natural transformation $1_{\bd{Top}}\Rightarrow \rpt\circ \Om_R$.
\begin{lemma}\label{toprefl}
For every topological space $X$ the map $\psi_X:X\ra \rpt(\Om_R(X))$ defined as $x\mapsto \up x$ is a continuous map. This is a homeomorphism precisely when $X$ is a $T_0$ space.
\end{lemma}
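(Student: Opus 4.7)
The plan is to break the statement into four ingredients: (a) each $\up x$ really is a point of $\rpt(\Om_R(X))$, (b) $\psi_X$ is continuous, (c) $\psi_X$ is surjective for \emph{any} space, and (d) injectivity of $\psi_X$ captures exactly the $T_0$ axiom. Openness of $\psi_X$ will then be essentially free.

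First I would verify (a): for $x\in X$, the upset $\up x$ is completely join-prime in $\ca{U}(X)$, because joins in $\ca{U}(X)$ are unions, and if $\up x\se\bcu_i U_i$ then $x\in U_i$ for some $i$, whence $\up x\se U_i$ since each $U_i$ is an upset. So $\psi_X$ indeed lands in $\rpt(\Om_R(X))$. For continuity (b), the topology on $\rpt(\Om_R(X))$ has basic opens $\varphi(U)=\{p\in\rpt(\ca{U}(X)):p\se U\}$ for $U\in\Om(X)$. The key identity is
\[
\psi_X^{-1}(\varphi(U))=\{x\in X:\up x\se U\}=U,
\]
where the second equality uses Proposition \ref{upset} (open sets are upsets in the specialization order, so $x\in U$ iff $\up x\se U$). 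Thus preimages of subbasic opens are opens.

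Next I would handle surjectivity (c), which is the one step requiring a small argument and is the closest thing here to a main obstacle (though still elementary). Given a completely join-prime $V\in\ca{U}(X)$, observe that $V=\bcu_{x\in V}\up x$ as a union of upsets inside the saturated set $V$. Complete join-primeness forces $V\se\up x$ for some $x\in V$, and the reverse inclusion $\up x\se V$ is automatic since $x\in V$ and $V$ is saturated. Hence $V=\up x=\psi_X(x)$, and $\psi_X$ is surjective regardless of separation axioms.

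For (d), $\psi_X(x)=\psi_X(y)$ amounts to $\up x=\up y$, i.e.\ $x$ and $y$ are indistinguishable in the specialization preorder, so injectivity is equivalent to $T_0$. Finally, when $X$ is $T_0$ and thus $\psi_X$ is a continuous bijection, I would note that $\psi_X$ is also open: for $U\in\Om(X)$,
\[
\psi_X[U]=\{\up x:x\in U\}=\{\up x\in\rpt(\ca{U}(X)):\up x\se U\}=\varphi(U),
\]
using surjectivity together with the identity from step (b). Hence $\psi_X$ is a homeomorphism exactly when $X$ is $T_0$, completing the proof.
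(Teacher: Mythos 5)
Your proof is correct and follows essentially the same route as the paper's: identify the points of $\rpt(\Om_R(X))$ with principal upsets, check that preimages and images of opens are $U\leftrightarrow\varphi(U)$, and reduce the homeomorphism condition to injectivity, i.e.\ to $T_0$. The only difference is that you spell out both directions of the claim that the completely join-prime elements of $\ca{U}(X)$ are exactly the sets $\up x$, which the paper states as an observation without proof.
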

\begin{proof}
That the map is well-defined and surjective follows from the observation that the completely join-prime elements of $\ca{U}(X)$ are precisely the principal upsets. For continuity, we observe that the $\psi_X$-preimage of an open set $\varphi(U)$ is the set $\{x\in X:\up x\in \varphi(U)\}=U$. This map is also open, as the direct image of an open $U\se X$ is the open $\{\up x:\up x\se U\}=\varphi(U)$. The map is then a homeomorphism when it is injective, and this holds if and only if whenever $x\neq y$ we have $\up x\neq \up y$. This amounts to the specialization preorder being an order, that is, the space being $T_0$. 
\end{proof}

In the following, for a space $X$ and for $x\in X$, we denote the neighborhood filter of $x$ in $\Om(X)$ as $N(x)$.
\begin{lemma}\label{l:N(x)-gen}
For a space $X$, we have $\ca{U}(X)^*=\ca{I}(\{N(x):x\in X\})$.
\end{lemma}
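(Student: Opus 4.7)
The plan is to establish the equality by directly computing the elements of $\ca{U}(X)^*$ as intersections of neighborhood filters, relying only on the definition of $\up^L$ together with the fact that open sets are upsets in the specialization order (Proposition \ref{upset}). No appeal to the adjunction machinery of Theorem \ref{charC*} turns out to be needed beyond the very definition of $\up^L$.

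First, I would observe that for any saturated set $c\in\ca{U}(X)$,
\[
\up^L c=\{U\in\Om(X):c\se U\}=\bca_{y\in c}\{U\in\Om(X):y\in U\}=\bca_{y\in c}N(y),
\]
since $c\se U$ is literally the statement that $y\in U$ for every $y\in c$. This already yields the inclusion $\ca{U}(X)^*\se \ca{I}(\{N(x):x\in X\})$: every element of $\ca{U}(X)^*$ has the form $\up^L c$ for some $c\in\ca{U}(X)$, hence is an intersection of neighborhood filters. The case $c=\emptyset$ accounts for the empty intersection, which by convention equals the improper filter $\Om(X)$.

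For the reverse inclusion I would take an arbitrary $S\se X$ and show $\bca_{x\in S}N(x)\in\ca{U}(X)^*$. The key observation is that, since open sets are saturated, an open $U$ contains $S$ if and only if it contains the upset $\up S$ generated by $S$. Therefore
\[
\bca_{x\in S}N(x)=\bca_{y\in \up S}N(y)=\up^L(\up S),
\]
and as $\up S\in\ca{U}(X)$ this filter lies in $\ca{U}(X)^*$. Combining both inclusions gives the desired equality.

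The proof is essentially a direct set-theoretic unfolding, so there is no serious obstacle. The only mild subtlety is to match the empty intersection appearing in $\ca{I}(\{N(x):x\in X\})$ with the improper filter $\Om(X)=\up^L(\emptyset)$ living in $\ca{U}(X)^*$; once this bookkeeping is handled, the two inclusions are immediate consequences of the specialization-order characterization of saturated sets.
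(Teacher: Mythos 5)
Your proof is correct and follows essentially the same route as the paper's: both rest on unfolding $\up^{\Om(X)}c$ as $\bca_{y\in c}N(y)$ via the fact that open sets are upsets in the specialization order (the paper phrases this through $\up^{\Om(X)}\up x=N(x)$ together with join-generation by principal upsets). Your explicit treatment of the reverse inclusion via $\up S$ and of the empty intersection is a welcome bit of extra care that the paper leaves implicit.
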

\begin{proof}
   We notice that for each $x\in X$ we have $\up^{\Om(X)}\up x=N(x)$. As the elements of the form $\up x$ join-generate $\ca{U}(X)$, for each $U\in \ca{U}(X)$ we have that $\up^{\Om(X)}U=\bca \{N(x):x\in U\}$. 
\end{proof}
Recall that an adjunction $L:\ca{C}\lra \ca{D}:R$ is said to be \emph{idempotent} if every element of the form $R(d)$ for some object $d\in \mf{Obj}(\ca{D})$ is a fixpoint on the $\ca{C}$ side, and the same holds for the $\ca{D}$ side. 
\begin{theorem}
    The pair $(\Om_R,\rpt)$ constitutes an idempotent adjunction $\bd{Top}\lra \opp{\bd{Raney}}$. Raney duality is the restriction of this adjunction to a dual equivalence.
\end{theorem}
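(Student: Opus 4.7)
The plan is to interpret the maps $\varphi$ of Lemma \ref{raneyrefl} and $\psi$ of Lemma \ref{toprefl} as the counit and unit of an adjunction $\Om_R\dashv \rpt$, and then to read off idempotence and the identification with Raney duality from those two lemmas.

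First, I would verify naturality of the two families. Naturality of $\psi$ at a continuous $f:X\to Y$ reduces to the pointwise identity $(f^{-1})^*(\up x)=\up f(x)$ in $\ca{U}(Y)$, which follows from the adjoint characterisation of $(f^{-1})^*$ together with the fact that continuity preserves specialization. Naturality of $\varphi$ at a Raney morphism $f:(L,C)\to (M,D)$ reduces, for $a\in L$, to $(f^*)^{-1}(\varphi_{(L,C)}(a))=\varphi_{(M,D)}(f(a))$, which is immediate from $f^*\dashv f$. Then I would check the triangle identities pointwise. For the triangle in $\bd{Top}$, applied to $p\in \rpt(L,C)$, I would chase $\up p$ through the left adjoint $\varphi^*_{(L,C)}$: unraveling definitions gives $\varphi^*(\up p)=\bwe\{c\in C:p\leq c\}=p$. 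For the triangle in $\opp{\bd{Raney}}$, applied to $U\in \ca{U}(X)$, one computes $\varphi_{\Om_R(X)}(U)=\{\up x:x\in U\}$, and then $\psi_X^{-1}$ returns $U$.

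With the adjunction in place, idempotence amounts to two claims: every $\varphi_{\Om_R(X)}$ is an iso, and every $\psi_{\rpt(L,C)}$ is an iso. The first is automatic from Lemma \ref{raneyrefl}, since the completely join-primes of $\ca{U}(X)$ are exactly the principal upsets and these join-generate $\ca{U}(X)$. The main obstacle is the second, which by Lemma \ref{toprefl} reduces to showing that $\rpt(L,C)$ is always $T_0$; this is where the meet-generation axiom in the definition of a Raney extension is essential. Given distinct $p,q\in \rpt(L,C)$ with, say, $q\nleq p$ as elements of $C$, the representation $p=\bwe\{a\in L:p\leq a\}$ supplies some $a\in L$ with $p\leq a$ and $q\nleq a$, so that $\varphi_{(L,C)}(a)$ is an open separating the two points. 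The restriction to Raney duality is then immediate: the fixpoints are, on the topological side, the $T_0$ spaces (Lemma \ref{toprefl}), and on the algebraic side, the Raney extensions $(L,C)$ whose coframe component is join-generated by its completely join-prime elements (Lemma \ref{raneyrefl}), which are precisely the objects of classical Raney duality.
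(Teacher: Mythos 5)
Your proposal is correct, but it verifies the adjunction by a genuinely different route than the paper. The paper establishes adjointness via the universal property of the two arrows: given any Raney morphism $f:(L,C)\to(\Om(X),\ca{U}(X))$, it constructs a factorization $f_{\varphi}$ through $\varphi_{(L,C)}$ by invoking the lifting criterion of Theorem \ref{whenliftsfilters} together with the fact that $\ca{U}(X)^*$ is generated by the neighbourhood filters $N(x)$ (Lemma \ref{l:N(x)-gen}), and dually factors continuous maps into $\rpt(L,C)$ through $\psi_X$. You instead take the unit--counit presentation: naturality of $\psi$ and $\varphi$ by direct adjoint computations, followed by a pointwise check of both triangle identities. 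Both are legitimate; your version is more self-contained, avoiding Theorem \ref{whenliftsfilters} and Lemma \ref{l:N(x)-gen} entirely, at the cost of the slightly fiddly order-theoretic computations of $(f^{-1})^*$ on principal upsets and of $\varphi^*$ on saturations of points (which do check out, keeping in mind that the specialization order on $\rpt(L,C)$ is the opposite of the order inherited from $C$). A genuine bonus of your write-up is the explicit proof that $\rpt(L,C)$ is always $T_0$, extracted from the meet-generation of $C$ by $L$: the paper's idempotence argument only records that any $T_0$ space is a fixpoint (Lemma \ref{toprefl}) and leaves the $T_0$-ness of spectra implicit, so your argument closes that small gap. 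The identification of the fixpoints, and hence of the restriction to Raney duality, agrees with the paper's via Lemmas \ref{raneyrefl} and \ref{toprefl}.
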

\begin{proof}
We claim that the two maps defined in Lemmas \ref{raneyrefl} and \ref{toprefl} are the required natural transformations, as defined for each component. Suppose that $(L,C)$ is a Raney extension, and that we have a map $f:(L,C)\ra (\Om(X),\ca{U}(X))$ in $\bd{Raney}$ for some space $X$. Let will define a map $f_{\varphi}$ such that the following commutes:
\begin{center}
    \begin{tikzcd}[row sep=large,column sep=large]
        & (\varphi[L],\varphi[C])
        \ar[dr,"{f_{\varphi}}"]
        \\
        (L,C)
        \ar[ur,"\varphi"]
        \ar[rr,"f"]
        && (\Om(X),\ca{U}(X)).
    \end{tikzcd}
\end{center}
We slightly abuse notation and define a frame map $f_{\varphi}:\varphi[L]\to \ca{U}(X)$ as forced by commutativity of the diagram, namely, as $\varphi(a)\mapsto f(a)$. By Theorem \ref{whenliftsfilters}, to show that this can be extended to a map of Raney extension, it suffices to show that preimages of filters in $\ca{U}(X)^*$ are in $\varphi[C]^*$. By Lemma \ref{l:N(x)-gen} above, it suffices to show the claim for filters of the form $N(x)$. Let $x\in X$. We have 
\[
f_{\varphi}^{-1}(N(x))=\{\varphi(a):x\in f(a)\}=\{\varphi(a):f^*(\up x)\leq a\}=\{\varphi(a):\varphi(f^*(\up x))\se \varphi(a)\},
\]
where $f^*(\up x)\leq a$ is equivalent to $\varphi(f^*(\up x))\se \varphi(a)$ because $f^*(\up x)$ is completely join-prime. For spaces, consider a space $X$ and a Raney extension $(L,C)$, and suppose that there is a continuous map $f:X\ra \rpt(L,C)$. We define the map $f_{\psi}$ making the following commute.
\begin{center}
    \begin{tikzcd}[row sep=large,column sep=large]
        & \rpt(\Om(X),\ca{U}(X))
        \ar[dr,"{f_{\psi}}"]
        \\
        X
        \ar[ur,"\psi_X"]
        \ar[rr,"f"]
        && \rpt(L,C).
    \end{tikzcd}
\end{center}
For a completely join-prime element $\up x$, we define $f_{\psi}(\up x)=f(x)$. Routine calculations show that this map is continuous. Let us see that the adjunction is idempotent. By Lemma \ref{raneyrefl}, any Raney extension $(\Om(X),\ca{U}(X))$ is a fixpoint, as the coframe $\ca{U}(X)$ is join-generated by the elements of the form $\up x$. By Lemma  \ref{toprefl}, any $T_0$ space is a fixpoint.
\end{proof}
Motivated by the result above and by Lemma \ref{raneyrefl}, we say that a Raney extension $(L,C)$ is \emph{spatial} if $C$ is join-generated by the completely join-prime elements.

\begin{proposition}\label{charspatiality}
A Raney extension $(L,C)$ is spatial if and only if $C^*\se \ca{I}(C^*\cap \fcp(L))$.
\end{proposition}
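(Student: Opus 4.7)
The plan is to transport the statement, via the isomorphism $\bwe:(L,C^*)\cong (L,C)$ of Theorem \ref{charC*}, into the filter picture where everything reduces to Lemma \ref{cpcjp}. Since $\bwe$ is an isomorphism of Raney extensions, in particular an order isomorphism between $C^*$ and $C$, spatiality of $(L,C)$ is equivalent to the statement that every element of $C^*$ is a join (in the inherited order of $C^*\se \opp{\Fi(L)}$) of completely join-prime elements of $C^*$.

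Next I would identify these joins and these primes concretely. The order of $C^*$ is the reverse of set inclusion of filters, so joins in $C^*$ are exactly set-theoretic intersections of filters. Under the isomorphism $\bwe$, the completely join-primes of $C$ correspond, via $\up^L$, to the completely join-prime elements of $C^*$; and by Lemma \ref{cpcjp} these are precisely the members of $C^*\cap \fcp(L)$. Putting these two observations together, spatiality of $(L,C)$ becomes the statement that every $F\in C^*$ can be written as an intersection of filters lying in $C^*\cap \fcp(L)$, that is, $C^*\se \ca{I}(C^*\cap \fcp(L))$.

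For the converse implication I would only need to observe that the reverse inclusion $\ca{I}(C^*\cap \fcp(L))\se C^*$ is automatic: $C^*$ is a subcolocale of $\opp{\Fi(L)}$, hence closed under all joins in $\opp{\Fi(L)}$, which are set-theoretic intersections of filters. So once $C^*\se \ca{I}(C^*\cap \fcp(L))$ holds, every element of $C^*$ is witnessed as an intersection of completely prime filters in $C^*$, and translating back through $\bwe$ yields that every element of $C$ is a join of completely join-primes.

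The only mildly delicate point, and hence the main thing to be careful about, is keeping the order-direction straight: the ``joins'' in $C^*$ that appear in spatiality are computed in the inherited coframe order of $C^*$, which is reverse inclusion, so they are intersections of filters rather than unions; once this is in place the equivalence is a direct translation through the isomorphism of Theorem \ref{charC*} and the characterization of completely join-primes given by Lemma \ref{cpcjp}.
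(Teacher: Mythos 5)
Your proof is correct and follows essentially the same route as the paper: both transport spatiality through the isomorphism $\bwe:(L,C^*)\cong (L,C)$ of Theorem \ref{charC*} and identify the completely join-primes of $C^*$ with $C^*\cap \fcp(L)$ (the paper via Corollary \ref{c:pointsfilters}, you directly via Lemma \ref{cpcjp}, which is where that corollary comes from anyway). Your explicit remarks that joins in $C^*$ are set-theoretic intersections and that $\ca{I}(C^*\cap\fcp(L))\se C^*$ holds automatically are accurate and just make the paper's one-line argument more self-contained.
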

\begin{proof}
Because of the isomorphism $\up^L:C\cong C^*$, a Raney extension $(L,C)$ is spatial precisely when all elements of $C^*$ are intersections of completely join-prime elements in $C^*$, as by Corollary \ref{c:pointsfilters} we have $\rpt(C^*)=\fcp(L)\cap C^*$.
\end{proof}

\begin{proposition}
    For a spatial frame $L$, the pair $(L,\opp{\ca{I}(\fcp(L))})$ is the free spatial Raney extension over it. In particular, the category of spatial frames is a full coreflective subcategory of that of spatial Raney extensions.
\end{proposition}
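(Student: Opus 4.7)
The plan is twofold: first show that $(L,\opp{\ca{I}(\fcp(L))})$ is a spatial Raney extension, and then establish the universal property that identifies it as free over $L$.

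For the first part, since $L$ is spatial, Proposition~\ref{famouschar} gives that $\ca{I}(\fcp(L))$ contains all principal filters, and so $(L,\opp{\ca{I}(\fcp(L))})$ is a Raney extension, as already listed among the motivating examples in the background. Its subcolocale of fixpoint filters is $C^*=\ca{I}(\fcp(L))$, which contains $\fcp(L)$; hence $C^*\cap\fcp(L)=\fcp(L)$ and $\ca{I}(C^*\cap\fcp(L))=C^*$, so spatiality follows from Proposition~\ref{charspatiality}.

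For the universal property, I plan to show: for every spatial Raney extension $(M,D)$ and every frame map $f\colon L\to M$, there is a unique Raney morphism $\tilde f\colon (L,\opp{\ca{I}(\fcp(L))})\to (M,D)$ whose restriction to $L$ is $f$. By Theorem~\ref{whenlifts}, this reduces to checking that $f^{-1}(F)\in\ca{I}(\fcp(L))$ for every $F\in D^*$. Applying Proposition~\ref{charspatiality} to $(M,D)$, each such $F$ can be written as an intersection $\bca_i P_i$ of completely prime filters $P_i\in\fcp(M)$. Because a frame map pulls completely prime filters back to completely prime filters, and set-theoretic preimage commutes with intersection, $f^{-1}(F)=\bca_i f^{-1}(P_i)\in\ca{I}(\fcp(L))$. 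Uniqueness of $\tilde f$ is also provided by Theorem~\ref{whenlifts}, which specifies the lift as $\clo{D^*}(f[-])$.

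This universal property is exactly the statement that $L\mapsto(L,\opp{\ca{I}(\fcp(L))})$ is left adjoint to the forgetful functor $(L,C)\mapsto L$ from spatial Raney extensions to spatial frames, with the unit being the identity on $L$. Since a fully faithful left adjoint exhibits its domain as a full coreflective subcategory of its codomain, the second claim follows. The only real subtlety is in verifying that preimages of completely prime filters along frame maps remain completely prime, which is immediate from preservation of joins: if $\bve_i a_i\in f^{-1}(P)$ then $\bve_i f(a_i)\in P$, and complete primeness of $P$ yields $f(a_i)\in P$, i.e., $a_i\in f^{-1}(P)$, for some $i$.
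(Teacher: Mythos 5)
Your proposal is correct and follows essentially the same route as the paper: both arguments reduce the lifting problem to Theorem~\ref{whenlifts}, use Proposition~\ref{charspatiality} to write each filter in $D^*$ as an intersection of completely prime filters, and then use the fact that frame-map preimages of completely prime filters are completely prime and commute with intersections. Your version adds some worthwhile detail the paper leaves implicit (verifying that $(L,\opp{\ca{I}(\fcp(L))})$ is itself spatial, and making the uniqueness and adjunction framing explicit), but the underlying argument is the same.
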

\begin{proof}
The assignment $L\mapsto (L,\opp{\ca{I}(\fcp(L))})$ from the category of spatial frames to $\bd{Raney}$ can be extended to morphisms, by Theorem \ref{whenliftsfilters}. The assignment, then, is functorial. Suppose that we have a map $f:L\ra M$ between spatial frames, and that $(M,C)$ is a spatial Raney extension. By spatiality, we must have $C^*\se \ca{I}(\fcp(M))$, by Proposition \ref{charspatiality}. Preimages under $f$ of completely prime filters are completely prime. This means that preimages of filters in $C^*$ are in $\ca{I}(\fcp(L))$. By Proposition \ref{whenelifts}, we have a morphism $(L,\opp{\ca{I}(\fcp(L))})\ra(M,C)$ which extends the frame map $f:L\ra M$. 
\end{proof}

\subsection{The collection of Raney spectra on a frame}

In this subsection, our final goal is proving that, on a frame $L$, for any Raney extension $(L,C)$ we have subspace inclusions $\pt_D(L)\se \rpt(L,C)\se \pt(L)$.

\begin{lemma}\label{bigmeet}
For a frame $L$, for any $a\in L$ the meet $\bwe \{x\in L:a<x\}$ is exact.
\end{lemma}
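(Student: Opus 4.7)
The plan is to verify the exactness condition directly from the definition. Writing $S = \{x \in L : a < x\}$ and $m = \bwe S$, I need to show that for every $b \in L$,
\[
m \ve b \;=\; \bwe_{x \in S}(x \ve b).
\]
The inequality $m \ve b \leq \bwe_{x \in S}(x \ve b)$ is immediate, since $m \leq x$ for every $x \in S$ and the operation $- \ve b$ is monotone. The substantive task is the reverse inequality, which I would handle by a case split on whether $b \leq a$.

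If $b \leq a$, then for every $x \in S$ one has $x \ve b = x$ (because $x > a \geq b$), so $\bwe_{x \in S}(x \ve b) = m$; and symmetrically $m \ve b = m$ since $b \leq a \leq m$. The equality is then trivial. If $b \not\leq a$, the key observation is that $a \ve b > a$, so $a \ve b$ itself lies in $S$. Plugging $x = a \ve b$ into the meet gives $\bwe_{x \in S}(x \ve b) \leq (a \ve b) \ve b = a \ve b$. On the other hand, $a \ve b \in S$ forces $m \leq a \ve b$, while $m \geq a$ since every element of $S$ exceeds $a$; joining with $b$ sandwiches $m \ve b$ and yields $m \ve b = a \ve b$. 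Combined with the easy direction, both sides equal $a \ve b$.

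I do not expect any serious obstacle: the whole argument rests on the elementary dichotomy that $a \ve b$ either coincides with $a$ (when $b \leq a$, so that joining with $b$ is invisible to all of $S$) or strictly exceeds $a$ (in which case $a \ve b$ itself serves as the minimum witness inside $S$ that controls both meets). The degenerate case $S = \emptyset$, which happens exactly when $a$ is the top element, is handled uniformly by the empty-meet convention on each side.
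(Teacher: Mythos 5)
Your proof is correct and follows essentially the same route as the paper's: the trivial inequality $m\ve b\leq \bwe_{x\in S}(x\ve b)$ plus a case split on $b\leq a$ versus $b\nleq a$, using in the latter case that $a\ve b\in S$ to sandwich both meets at $a\ve b$. The only (harmless) difference is that you pin down both sides as equal to $a\ve b$, whereas the paper only records the one nontrivial inequality.
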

\begin{proof}
Let $L$ be a frame and let $a\in L$. Let us consider the meet $\bwe \{x\in L:a<x\}$. Let $b\in L$. We claim that $\bwe \{x\ve b:a<x\}\leq \bwe \{x\in L:a<x\}\ve b$. We consider two cases. First, let us assume that $b\leq a$. If this is the case, then $b\leq x$ whenever $a<x$, and so both the left hand side and the right hand side equal $\bwe \{x\in L:a<x\}$. Now, let us assume instead that $b\nleq a$. This is equivalent to saying that $a<a\ve b$. This means that we have the chain of inequalities 
\[
\bwe \{x\ve b:a<x\}\leq a\ve b\leq \bwe \{x\in L:a<x\}\ve b.\qedhere
\]
\end{proof}
\begin{proposition}\label{p:cpexact}
A completely prime filter $L{\sm}\da p$ is exact if and only if the prime $p$ is covered.
\end{proposition}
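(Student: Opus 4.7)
The plan is to prove both directions by translating coveredness of $p$ into a statement about the filter $L\sm{\da}p$ being closed under certain exact meets, using Lemma \ref{bigmeet} crucially for one direction.

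For the direction ``$p$ covered $\Rightarrow L\sm{\da}p$ exact'', I would take an exact meet $\bwe_i x_i$ with each $x_i\in L\sm{\da}p$ (i.e., $x_i\nleq p$) and aim to show $\bwe_i x_i\nleq p$. Assuming for contradiction that $\bwe_i x_i\leq p$, exactness lets me join with $p$ inside the meet: $\bwe_i(x_i\ve p)=(\bwe_i x_i)\ve p=p$. Each element $x_i\ve p$ strictly exceeds $p$ since $x_i\nleq p$, so coveredness of $p$ forces some $x_i\ve p=p$, i.e.\ $x_i\leq p$, which contradicts the assumption that $x_i\in L\sm{\da}p$. This gives the desired conclusion.

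For the converse ``$L\sm{\da}p$ exact $\Rightarrow p$ covered'', I would invoke Lemma \ref{bigmeet} to observe that $m:=\bwe\{x\in L:p<x\}$ is an exact meet. Every element in the indexing set satisfies $x\nleq p$, so lies in the filter $L\sm{\da}p$; by the assumed exactness of this filter, $m\in L\sm{\da}p$, i.e.\ $m\nleq p$. Since $p\leq m$ trivially, this gives the strict inequality $p<m$. Now suppose $\bwe_i x_i=p$. Then $p\leq x_i$ for every $i$, and if it were the case that $p<x_i$ for all $i$, each $x_i$ would appear in the indexing set defining $m$, giving $m\leq x_i$ for all $i$ and hence $m\leq\bwe_i x_i=p$, contradicting $p<m$. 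Therefore $x_i=p$ for some $i$, establishing that $p$ is covered.

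I do not anticipate a serious obstacle: the key observation is just that Lemma \ref{bigmeet} provides a canonical exact meet whose value is tied precisely to the coveredness of $p$, and exactness of the filter lets us pull this correspondence across. The only care needed is in the forward direction, where one must recognize that exactness of $\bwe_i x_i$ is inherited when we consider $\bwe_i(x_i\ve p)$ via the definition $(\bwe_i x_i)\ve a=\bwe_i(x_i\ve a)$ of exactness itself.
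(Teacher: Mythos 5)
Your proof is correct and follows essentially the same route as the paper: the forward direction uses Lemma \ref{bigmeet} to produce the canonical exact meet $\bwe\{x:p<x\}$ and tests it against the filter, while the converse combines exactness of $\bwe_i x_i$ with coveredness of $p$ exactly as in the paper's argument (you merely phrase it by contradiction and spell out the final step that coveredness follows from $\bwe\{x:p<x\}\nleq p$, which the paper leaves implicit). No gaps.
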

Suppose that the completely prime filter $L{\sm}\da p$ is exact. To show that the prime $p$ is covered, we prove that $\bwe \{x\in L:p<x\}\nleq p$. By Lemma \ref{bigmeet}, the meet on the left-hand side is exact. The result follows by our assumption that $L{\sm}\da p$ is closed under exact meets. For the converse, we suppose that $p$ is a covered prime and that $x_i\nleq p$ for the members of some family $\{x_i:i\in I\}$ such that their meet is exact. We then have that $x_i\ve p\neq p$ for every $i\in I$, and as $p$ is covered, this implies that $\bwe_i (x_i\ve p)\neq p$. By exactness of the meet $\bwe_i x_i$, we also have $(\bwe_i x_i)\ve p\neq p$, that is $\bwe _i x_i\nleq p$, as required.

\begin{lemma}\label{spectrumofef}
For any frame $L$, the spectrum of $(L,\opp{\fe(L)})$ is homeomorphic to the space $\pt_D(L)$. The spectrum of $(L,\opp{\fse(L)})$ is the classical spectrum $\pt(L)$.
\end{lemma}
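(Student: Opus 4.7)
The plan is to apply Corollary \ref{c:pointsfilters}, which describes the spectrum of a Raney extension $(L,C^*)$ as the set $\fcp(L)\cap C^*$ equipped with the topology whose opens are $\{P\in\fcp(L)\cap C^*:a\in P\}$ for $a\in L$. Consequently, for both halves of the claim it is enough to identify $\fcp(L)\cap \fe(L)$ and $\fcp(L)\cap\fse(L)$ concretely, and then observe that the induced topologies match the expected ones on $\pt_D(L)$ and $\pt(L)$.

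For the strongly exact case, I would first check that $\fcp(L)\se\fse(L)$, so that the intersection is just $\fcp(L)$. One direct way is this: every completely prime filter $F$ is trivially an element of $\ca{I}(\fcp(L))$, and by Theorem \ref{posetofsubloc} this sublocale is contained in $\fse(L)$. Alternatively, if $F=L\sm\da p$ with $p$ prime and $\bwe_i x_i$ is strongly exact with each $x_i\nleq p$, then by Lemma \ref{l:prime} $x_i\to p=p$ for every $i$, so strong exactness forces $(\bwe_i x_i)\to p=p$, hence $\bwe_i x_i\nleq p$. With $\fcp(L)\cap\fse(L)=\fcp(L)$ in hand, the topology coming from Corollary \ref{c:pointsfilters} is exactly the one describing the classical spectrum of $L$ via completely prime filters, giving $\pt_R(L,\opp{\fse(L)})\cong \pt(L)$.

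For the exact case, Proposition \ref{p:cpexact} already tells us that under the bijection $p\leftrightarrow L\sm\da p$ between primes and completely prime filters, the filter $L\sm\da p$ lies in $\fe(L)$ if and only if $p$ is a covered prime. Therefore $\fcp(L)\cap\fe(L)$ is in bijection with the underlying set of $\pt_D(L)$. It remains to match topologies: the basic open $\{P\in\fcp(L)\cap\fe(L):a\in P\}$ corresponds, under this bijection, to $\{p\text{ covered}:a\nleq p\}$, which is the restriction to covered primes of the basic open $\varphi_L(a)$ of $\pt(L)$. Since $\pt_D(L)$ is by definition the space of covered primes equipped with the subspace topology from $\pt(L)$, this yields the required homeomorphism $\pt_R(L,\opp{\fe(L)})\cong \pt_D(L)$.

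There is no real obstacle here: both statements reduce by Corollary \ref{c:pointsfilters} to set-level identifications of completely prime filters inside the relevant sublocale of $\Fi(L)$, and the prior results (Theorem \ref{posetofsubloc} together with Proposition \ref{p:cpexact}) resolve each identification immediately. The only subtle point worth writing out carefully is the containment $\fcp(L)\se\fse(L)$, which is used implicitly rather than stated as a lemma in the excerpt.
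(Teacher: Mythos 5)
Your proposal is correct and follows essentially the same route as the paper: both reduce the statement to Corollary \ref{c:pointsfilters} and then identify $\fcp(L)\cap\fe(L)$ via Proposition \ref{p:cpexact} and $\fcp(L)\cap\fse(L)=\fcp(L)$ via the fact that completely prime filters are strongly exact. Your explicit justification of the containment $\fcp(L)\se\fse(L)$ and of the topology-matching step is a welcome elaboration of details the paper leaves implicit, but it is not a different argument.
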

\begin{proof}
By Corollary \ref{c:pointsfilters}, the points of $(L,\opp{\fe(L)})$ are the completely prime filters which are also exact. By Proposition \ref{p:cpexact}, these are the filters of the form $L{\sm}\da p$ for some covered prime $p\in L$. Indeed, then, we have a bijection between the points of $\rpt(L,\opp{\fe(L)})$ and those of $\pt_D(L)$. This is a restriction of the standard homeomorphism between the spectrum $\pt(L)$ and its space of completely prime filters, and so it is a homeomorphism. For $(L,\opp{\fse(L)})$, it suffices to notice that since all completely prime filters are strongly exact, $\fcp(L)\cap \fse(L)=\fcp(L)$.
\end{proof}
We shall now refine the result above to the case of subfit frames. We call $\maxpt(L)$ the collection of maximal primes of a frame $L$, equipped with the subspace topology inherited from $\pt(L)$.

\begin{proposition}\label{minimalcpf}
    Let $L$ be a frame. A prime $p\in L$ is maximal if and only if $L{\sm}\da p$ is a regular filter.
\end{proposition}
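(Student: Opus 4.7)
The plan is to invoke Proposition \ref{reg}, which identifies the regular filters in $\Fi(L)$ with the intersections of filters of the form $\{x\in L : x\ve a=1\}$. The two implications are then handled directly using this characterization.

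For $(\Rightarrow)$, assume $p$ is a maximal prime. I would prove the identity $L\sm \da p = \{x\in L : x\ve p = 1\}$, which is a regular filter in view of Proposition \ref{reg}. The inclusion $\supseteq$ is immediate, since $x\le p$ would yield $x\ve p = p < 1$. For $\se$, take $x\nleq p$: then $x\ve p$ is strictly above $p$, and by the maximality of $p$ no proper element of $L$ lies strictly above $p$, so $x\ve p = 1$. This $\se$-inclusion is the delicate step, hinging on the fact that a maximal prime admits no proper element strictly above it.

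For $(\Leftarrow)$, suppose $L\sm \da p$ is regular and write $L\sm \da p = \bca_{a\in A}\{x : x\ve a = 1\}$ via Proposition \ref{reg}. The crucial first step is to show that every $a\in A$ satisfies $a\le p$: if instead $a\nleq p$, then $a\in L\sm \da p$ sits in the intersection, so in particular $a\in\{x : x\ve a = 1\}$ yields $a\ve a = a = 1$; but the factor $\{x : x\ve 1 = 1\} = L$ contributes nothing to the intersection, so we may discard any $a=1$ from $A$, giving the desired contradiction and forcing $a\le p$ for every $a\in A$. Now let $q$ be any prime with $p<q$: then $q\in L\sm \da p$, so $q\ve a = 1$ for every $a\in A$; combined with $a\le p<q$ this gives $q\ve a = q$, forcing $q=1$ and contradicting that $q$ is a proper prime. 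Hence no prime strictly exceeds $p$, establishing maximality.
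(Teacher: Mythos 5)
Your proof is correct in substance and rests on the same main tool as the paper's, namely the description of regular filters in Proposition \ref{reg}; the forward direction is essentially identical (the paper phrases $\{x\in L:x\ve p=1\}$ as the pseudocomplement of $\up p=\{1,p\}$ in $\Fi(L)$, which is the same filter). Where you genuinely diverge is in the converse: the paper first uses the fact that $L{\sm}\da p$, being completely prime, is a meet-prime element of $\Fi(L)$ and hence cannot be a proper intersection, so it must equal a single generator $\{x\in L:x\ve a=1\}$; you instead keep the whole intersection and show directly that every index $a$ (after discarding $a=1$) lies below $p$. Your route is slightly more elementary, avoiding the meet-irreducibility argument, at the small cost of having to note that the reduced index set is nonempty (it is, since $L{\sm}\da p\neq L$ because $p\in\da p$) before you can conclude that $q\ve a=q$ forces $q=1$. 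One genuine slip to repair: in the last step you quantify only over primes $q$ with $p<q$, so you literally prove that $p$ is maximal among primes, whereas the maximality used in your own forward direction (and in the paper's proof, which derives $\up p=\{p,1\}$) is maximality among all elements below $1$. Your argument never uses primality of $q$ --- for any $q$ with $p<q$ you get $q\in L{\sm}\da p$ and hence $q=q\ve a=1$ --- so simply take $q$ to be an arbitrary element strictly above $p$; with that one-word change the two directions use the same notion of maximality and the proof is complete.
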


\begin{proof}
Suppose that we have a maximal prime $p\in L$. Because it is maximal, we have $\up p=\{p,1\}$. We claim that the completely prime filter $L{\sm}\da p$ is its pseudocomplement in the frame of filters. Indeed, we have $L{\sm}\da p\cap \{1,p\}=\{1\}$. Furthermore, if for a filter $F$ we have $F\cap \{1,p\}=\{1\}$ then $p\notin F$, and so for $f\in F$ we must have $f\nleq p$. For the converse, suppose that we have a prime $p\in L$ such that $L{\sm}\da p$ is a regular filter. By Proposition \ref{reg}, this is the intersection of a collection of filters of the form $\{x\in L:x\ve a=1\}$ for some $a\in L$. As $L{\sm}\da p$ is completely prime, it must be $\{x\in L:x\ve a=1\}$ for some $a\in L$. This means that for all $x\in L$ the conditions $x\leq p$ and $x\ve a\neq 1$ are equivalent. In particular, because the filter is not all of $L$ (as it is completely prime), we must have $a\leq p$ since $a\ve a=a\neq 1$. This means that if $x\nleq p$ then $x\ve a=1$ and so $x\ve p=1$, for all $x\in L$. This means that $p$ must be maximal. 
\end{proof}

\begin{proposition}
For a subfit frame $L$, the spectrum of the Raney extension $(L,\opp{\fr(L)})$ is the $T_1$ space $\maxpt(L)$.
\end{proposition}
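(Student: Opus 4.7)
The plan is to mimic the structure of the proof of Lemma \ref{spectrumofef}, with exactness replaced by regularity throughout. By Corollary \ref{c:pointsfilters}, the points of $\rpt(L,\opp{\fr(L)})$ are exactly the completely prime filters of $L$ which are also regular, and by Proposition \ref{minimalcpf} these are precisely the filters of the form $L\sm\da p$ for a maximal prime $p\in L$. This yields a bijection between the underlying set of $\rpt(L,\opp{\fr(L)})$ and $\maxpt(L)$.

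Next I would check that this bijection is a homeomorphism. The topology on $\rpt(L,\opp{\fr(L)})$ is generated by the sets $\varphi(a)=\{P:a\in P\}$ for $a\in L$, and under the standard identification of a prime $p$ with the completely prime filter $L\sm\da p$, these are precisely the restrictions of the basic opens of $\pt(L)$ to the chosen subset of points. So the bijection is the restriction of the classical homeomorphism $\pt(L)\cong \fcp(L)$ to $\maxpt(L)$, producing a homeomorphism onto $\maxpt(L)$ with its subspace topology.

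To verify the $T_1$ property, I would argue that the specialization preorder on $\maxpt(L)$ is trivial. Specialization on $\pt(L)$ amounts to inclusion of the associated completely prime filters, so for $P=L\sm\da p$ and $Q=L\sm\da q$ one has $P\leq Q$ iff $q\leq p$ in $L$; between two maximal primes this forces $p=q$, so distinct points of $\maxpt(L)$ are incomparable, which is equivalent to being $T_1$.

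The only step that requires a little attention, and hence the main potential obstacle, is identifying $C^*$ with $\fr(L)$ itself in the Raney extension $(L,\opp{\fr(L)})$ so that Corollary \ref{c:pointsfilters} applies; but this is immediate from Theorem \ref{containsprincipal}, since for subfit $L$ Proposition \ref{famouschar} ensures that $\fr(L)$ contains all principal filters, and $\fr(L)\se \Fi(L)$ is already known to be a subcolocale (it is the Booleanization of $\Fi(L)$).
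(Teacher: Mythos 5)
Your proposal is correct and follows essentially the same route as the paper: identify the points via Corollary \ref{c:pointsfilters} as the regular completely prime filters, translate these to maximal primes via Proposition \ref{minimalcpf}, observe the homeomorphism is a restriction of the standard one, and verify $T_1$ from the incomparability of maximal primes. The only cosmetic differences are that the paper first establishes $\fe(L)=\fr(L)$ for subfit $L$ so as to reuse the extension $(L,\opp{\fe(L)})$, whereas you work with $\fr(L)$ directly, and that you phrase the $T_1$ check via triviality of the specialization preorder rather than exhibiting separating opens.
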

\begin{proof}
Suppose that $L$ is a subfit frame. We claim that all its exact filters are regular. By Proposition \ref{famouschar}, we have that $\fr(L)$ contains all principal filters, and so by Lemma \ref{principalmin} we must have $\fe(L)\se \fr(L)$. The reverse inclusion holds for all frames. By Corollary \ref{c:pointsfilters}, then, the points of $(L,\opp{\fe(L)})$ are the regular completely prime filters, which by Proposition \ref{minimalcpf} are those corresponding to maximal primes of $L$. The fact that this is a homeomorphism comes from the fact that this is a restriction of the standard homeomorphism between the spectrum $\pt(L)$ and the spectrum defined in terms of prime elements of $L$. The space $\maxpt(L)$ is a $T_1$ space, since whenever $p,q\in \maxpt(L)$ we have both $p\nleq q$ and $q\nleq p$ by maximality, and so the open set $\{a\in L:a\nleq p\}$ contains $q$ and omits $p$, and the open set $\{a\in L:a\nleq q\}$ contains $p$ and omits $q$. 
\end{proof}

\begin{lemma}\label{spectrumboundary}
For a Raney extension $(L,C)$ we have subspace inclusions 
\[
\pt_D(L)\se \rpt(L,C)\se \pt(L).
\]  
\end{lemma}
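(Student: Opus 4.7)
The plan is to reduce everything to the interval characterization of Raney extensions given in Theorem \ref{Rboundaries} together with the description of points in Corollary \ref{c:pointsfilters}. By Theorem \ref{Rboundaries}, for any Raney extension $(L,C)$ we have coframe inclusions $\fe(L)\se C^*\se \fse(L)$ inside $\Fi(L)$. Intersecting each of these subcolocales with $\fcp(L)$ gives
\[
\fcp(L)\cap \fe(L) \se \fcp(L)\cap C^* \se \fcp(L)\cap \fse(L),
\]
which by Corollary \ref{c:pointsfilters} is exactly a chain of inclusions at the level of underlying point-sets
\[
\rpt(L,\opp{\fe(L)})\se \rpt(L,C)\se \rpt(L,\opp{\fse(L)}).
\]
Lemma \ref{spectrumofef} then identifies the two extremes with $\pt_D(L)$ and $\pt(L)$ respectively.

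Next I would verify that each of these set-theoretic inclusions is in fact a subspace inclusion. By Corollary \ref{c:pointsfilters}, the topology on $\rpt(L,C)$ has as a defining family of opens the sets $\{P\in \fcp(L)\cap C^*: a\in P\}$ as $a$ ranges over $L$; and the same recipe, applied to the distinguished values $C^*=\fe(L)$ and $C^*=\fse(L)$, produces the topologies on $\pt_D(L)$ and $\pt(L)$ under the homeomorphisms of Lemma \ref{spectrumofef}. Since the indexing set $L$ is the same in all three cases, the opens of $\rpt(L,C)$ are precisely the traces on $\fcp(L)\cap C^*$ of the opens of $\pt(L)$, and dually the opens of $\pt_D(L)$ are the traces on $\fcp(L)\cap \fe(L)$ of those of $\rpt(L,C)$. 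This is exactly what is needed for both inclusions to be subspace inclusions.

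There is no real obstacle here beyond marshalling the already-established identifications; the argument is essentially bookkeeping once Theorem \ref{Rboundaries}, Corollary \ref{c:pointsfilters}, and Lemma \ref{spectrumofef} are in hand. The only point that warrants care is that the homeomorphism of Lemma \ref{spectrumofef} between $\rpt(L,\opp{\fe(L)})$ and $\pt_D(L)$ sends the filter $L\sm\da p$ to the prime $p$, so one should briefly note that under this identification the open $\{P\in \fcp(L)\cap \fe(L): a\in P\}$ corresponds to $\{p\in \pt_D(L):a\nleq p\}$, matching the subspace topology inherited from $\pt(L)$.
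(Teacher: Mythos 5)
Your proof is correct and follows essentially the same route as the paper's: establish $\fe(L)\se C^*\se \fse(L)$, intersect with $\fcp(L)$, invoke Corollary \ref{c:pointsfilters} and then Lemma \ref{spectrumofef}. The only differences are cosmetic --- you cite Theorem \ref{Rboundaries} for the containment where the paper cites Theorem \ref{containsprincipal} together with Lemma \ref{principalmin}, and you spell out the check that the set inclusions are subspace inclusions, a detail the paper leaves implicit.
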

\begin{proof}
Suppose that $(L,C)$ is a Raney extension. We have $\fe(L)\se C^*\se \fse(L)$, by Theorem \ref{containsprincipal} and by Lemma \ref{principalmin}. Therefore, we also have
\[
\fcp(L)\cap \fe(L)\se\fcp(L)\cap C^*\se\fcp(L)\cap \fse(L).
\]
By Corollary \ref{c:pointsfilters}, this means that we have a chain of subspace inclusions
\[
\rpt(L,\opp{\fe(L)})\se \rpt(L,C)\se \rpt(L,\opp{\fse(L)}).
\]
The result follows from Lemma \ref{spectrumofef}.
\end{proof}

\begin{lemma}\label{meetofheyting}
For a frame $L$ and a subset $\ca{X}\se \Fi(L)$ we have that the smallest sublocale $\ca{S}(\ca{X})$ is the set $\ca{I}(\{\up a\ra F:a\in L,F\in \ca{X}\})$.
\end{lemma}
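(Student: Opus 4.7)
My plan is to reduce the statement to Lemma \ref{l:small-sl} applied to the frame $\Fi(L)$ and then simplify the resulting description using the fact that every filter is generated by principal filters. By Lemma \ref{l:small-sl} applied inside $\Fi(L)$, the smallest sublocale containing $\ca{X}$ is
\[
\ca{S}(\ca{X})=\ca{M}(\{G\ra F:G\in \Fi(L),F\in \ca{X}\}),
\]
where $\ca{M}$ denotes closure under meets taken in $\Fi(L)$. Since meets in $\Fi(L)$ are computed as set-theoretical intersections, $\ca{M}$ coincides with $\ca{I}$, so it suffices to prove
\[
\ca{I}(\{G\ra F:G\in \Fi(L),F\in \ca{X}\})=\ca{I}(\{\up a\ra F:a\in L,F\in \ca{X}\}).
\]

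The inclusion $\supseteq$ is immediate, as every principal filter $\up a$ is an element of $\Fi(L)$. For the converse inclusion, the key observation is that every filter $G$ is the join of its principal subfilters in $\Fi(L)$, that is, $G=\bve_{a\in G}\up a$. Combining this with the standard Heyting identity $(\bve_i x_i)\ra y=\bwe_i (x_i\ra y)$ valid in any frame, we obtain
\[
G\ra F=\Bigl(\bve_{a\in G}\up a\Bigr)\ra F=\bwe_{a\in G}(\up a\ra F)=\bca_{a\in G}(\up a\ra F),
\]
where the last equality uses again that meets in $\Fi(L)$ are intersections. This exhibits $G\ra F$ as an intersection of elements of the form $\up a\ra F$, completing the inclusion $\subseteq$ and hence the proof.

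There is no serious obstacle: the only subtlety is being careful that the two senses of ``meet'' (in $\Fi(L)$ versus the ambient power set) agree, which they do precisely because filter intersections are filters. Everything else is a direct application of the preceding lemma together with the continuity of the Heyting arrow in its first argument.
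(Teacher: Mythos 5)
Your proof is correct and follows essentially the same route as the paper: reduce to Lemma \ref{l:small-sl} applied in the frame $\Fi(L)$, then replace the general Heyting arrows $G\ra F$ by intersections of the arrows $\up a\ra F$ via the identity $G\ra F=\bca\{\up a\ra F:a\in G\}$. The paper asserts that identity without comment, whereas you justify it from $G=\bve_{a\in G}\up a$ and the antitone continuity of $\ra$ in its first argument, which is exactly the intended argument.
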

\begin{proof}
By Lemma \ref{l:small-sl}, it suffices to show that the collection in the claim is the same as $\ca{I}(\{G\ra F:G\in \Fi(L),F\in \ca{X}\})$. Indeed, for each $G\in \Fi(L)$ and $F\in \ca{X}$, we have $G\ra F=\bca \{\up g\ra F:g\in G\}$.
\end{proof}
The following fact follows directly from Lemma \ref{l:prime}, and the fact that completely prime filters are prime elements of $\Fi(L)$.
\begin{lemma}\label{l:heytingcp}
    For a frame $L$ and a completely prime filter $P\se L$, for each $a\in L$ we have $\up a\to P=L$ if $a\in P$, and $\up a\ra P=P$ otherwise.
\end{lemma}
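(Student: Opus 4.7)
The plan is to view the completely prime filter $P\se L$ as a prime element of the frame $\Fi(L)$ and apply Lemma \ref{l:prime} there. The hint in the paper---that completely prime filters are (in particular) prime elements of $\Fi(L)$---does exactly the work we need: completely prime elements of a frame are prime, so $P$ qualifies as the $p$ in Lemma \ref{l:prime}, and $\up a\in \Fi(L)$ plays the role of the $x$.

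Concretely, instantiating Lemma \ref{l:prime} in $\Fi(L)$ with $p := P$ and $x := \up a$ gives $\up a\to P = 1_{\Fi(L)}$ when $\up a \le P$, and $\up a \to P = P$ when $\up a \nleq P$. I then need two small translations. First, the top element $1_{\Fi(L)}$ is the improper filter $L$. Second, the order on $\Fi(L)$ is set inclusion, so $\up a \le P$ means $\up a\se P$; since $P$ is upward closed, this is equivalent to $a\in P$. Substituting yields exactly the two cases of the statement.

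There is essentially no obstacle here: the lemma is a direct corollary of Lemma \ref{l:prime} once one unpacks that a completely prime filter is a prime (in fact completely prime) element of the frame of filters, and that $\up a\se P$ iff $a\in P$. The lemma is stated separately because it will be convenient to cite in the subsequent computations using Heyting implication in $\Fi(L)$ (for instance in conjunction with Lemma \ref{meetofheyting} and Proposition \ref{charexact}).
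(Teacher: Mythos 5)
Your proposal is correct and follows exactly the paper's own route: the paper derives this lemma directly from Lemma \ref{l:prime} applied in the frame $\Fi(L)$, using that completely prime filters are prime elements there, with the same translations ($1_{\Fi(L)}=L$ and $\up a\se P$ iff $a\in P$). Nothing is missing.
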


In the following, for brevity, we will identify prime elements with the corresponding completely prime filters, without mention.

\begin{theorem}\label{spectrumpowerset}
The spectra of Raney extensions over $L$ coincide with the interval
\[
[\pt_D(L),\pt(L)] 
\]
of the powerset of $\pt(L)$.
\end{theorem}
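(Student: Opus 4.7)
The plan is to establish the two inclusions. One direction is already provided by Lemma \ref{spectrumboundary}, which confines every Raney spectrum to the interval $[\pt_D(L),\pt(L)]$. The work is in the other direction: given an arbitrary subspace $Y$ with $\pt_D(L)\se Y\se \pt(L)$, I would construct a Raney extension over $L$ whose spectrum recovers $Y$. Using the identification of each prime $p$ with its completely prime filter $L{\sm}{\da}p$, I treat $Y$ as a subset of $\fcp(L)$.

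My candidate is the smallest sublocale of $\Fi(L)$ containing all principal filters together with $Y$, namely $\ca{F}:=\ca{S}(\{\up b:b\in L\}\cup Y)$. To invoke Theorem \ref{Rboundaries}, I need $\fe(L)\se \ca{F}\se \fse(L)$. The left inclusion follows from Lemma \ref{principalmin}, since $\ca{F}$ is a sublocale containing every principal filter. For the right, it suffices to note that $\fse(L)$ is a sublocale of $\Fi(L)$ containing all principal filters (trivially) and every completely prime filter: the latter follows from a short computation with Lemma \ref{l:prime}, setting $y=p$, to see that $P=L{\sm}{\da}p$ is closed under strongly exact meets.

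The key step is to prove $\ca{F}\cap\fcp(L)=Y$. The containment $\supseteq$ is built in. For $\subseteq$, take $P'\in\ca{F}\cap\fcp(L)$ and use Lemma \ref{meetofheyting} to write $P'=\bca_i G_i$ where each $G_i$ is of the form $\up a\ra \up b$ (hence exact) or $\up a\ra P$ with $P\in Y$. Lemma \ref{l:heytingcp} reduces the second type to either $L$ (which can be dropped from the intersection) or $P$ itself. Since completely prime filters are completely meet-prime in $\Fi(L)$, from $\bca_i G_i\se P'$ one extracts some $G_i\se P'$; combined with $P'\se G_i$ coming from $P'=\bca_j G_j$, this forces $G_i=P'$. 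In one case $G_i\in Y$, yielding $P'\in Y$; in the other, $P'$ is an exact completely prime filter, and Proposition \ref{p:cpexact} places $P'$ in $\pt_D(L)\se Y$.

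Finally, Corollary \ref{c:pointsfilters} identifies $\rpt(L,\opp{\ca{F}})$ with $\ca{F}\cap\fcp(L)=Y$ endowed with opens $\{P\in Y:a\in P\}$ for $a\in L$, which is precisely the subspace topology from $\pt(L)$. The main obstacle I anticipate is the sublocale bookkeeping in the key step: making the description of $\ca{F}$ via Lemma \ref{meetofheyting} tight enough so that, after Lemma \ref{l:heytingcp}, the completely-meet-prime reduction lands on a generator of one of the two expected forms, ruling out the appearance of stray completely prime filters in $\ca{F}$.
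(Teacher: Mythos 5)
Your proposal is correct and follows essentially the same route as the paper: both construct the smallest sublocale of $\Fi(L)$ generated by the principal filters together with the chosen set of points, describe it via Lemma \ref{meetofheyting}, and use complete meet-primality of completely prime filters plus Proposition \ref{p:cpexact} to show that no stray points appear beyond the given set and $\pt_D(L)$. The only differences are cosmetic: you verify the containment in $\fse(L)$ and the subspace-topology claim slightly more explicitly, while the paper first replaces the principal filters by $\fe(L)$ via Lemma \ref{principalmin} before applying the same generator analysis.
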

\begin{proof}
Recall that, by Lemma \ref{spectrumofef}, we have $\pt_D(L)=\fe(L)\cap \fcp(L)$. That, for a Raney extension $(L,C)$, its spectrum is contained in the $[\pt_D(L),\pt(L)]$ interval is the content of Lemma \ref{spectrumboundary}. Conversely, suppose that we have a collection of completely prime filters $\ca{P}\se \Fi(L)$ such that $\fe(L)\cap \fcp(L)\se \ca{P}$. Consider the sublocale $\ca{S}(\ca{P}\cup L)\se \Fi(L)$. By Lemma \ref{principalmin}, this is the same as $\ca{S}(\ca{P}\cup \fe(L))$. Observe that $\fe(L)$ is stable under $\up a\to -$ for each $a\in L$, as it is a sublocale. The same holds for $\ca{P}$, by Lemma \ref{l:heytingcp}. By Lemma \ref{meetofheyting}, $\ca{S}(\ca{P}\cup L)=\ca{I}(\ca{P}\cup \fe(L))$. We now consider the Raney extension $(L,\opp{\ca{S}(\ca{P}\cup L)})$. It is clear that all the elements of $\ca{P}$ are points of this Raney extension, by Corollary \ref{c:pointsfilters}. Let us show the reverse set inclusion. Suppose that there is a completely prime filter $F$ such that $F\in \ca{S}(\ca{P}\cup L)$. By complete primality, and by the characterization above, this is either in $\ca{P}$ or in $\fe(L)$. In the second case, it is in $\ca{P}$, too, by assumption on $\ca{P}$. Indeed, then, $\rpt(L,\opp{\ca{S}(\ca{P}\cup L)})=\ca{P}$, as desired.
\end{proof}

\begin{remark}
It may be surprising that the spectrum $\pt_R(L,C)$ does not contain all points of $\pt(L)$, as this may be seen as a spectrum construction that forgets about too much information. However, it is the coframe $C$ that ought to be seen, alone, as the ordered structure of which we are taking the points. The frame $L$ (just like in Raney duality) is nothing but a carrier of information on how to topologize such set of points. Furthermore, from the result above, we may see that this is what makes Raney extensions more expressive than frames: if all points of $L$ were points of $\rpt(L,C)$, then Raney extensions would only be able to capture the sober spaces. 
\end{remark}

\section{Topological properties and Raney extensions}

\subsection{Sobriety}
We have the following result of \cite{suarez24}.
\begin{proposition}
    A space $X$ is sober if and only if $(\Om(X),\ca{U}(X))$ is $\ca{CP}$-compact.
\end{proposition}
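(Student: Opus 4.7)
The plan is to translate $\ca{CP}$-compactness of $(\Om(X),\ca{U}(X))$ into a condition on the completely prime filters of $\Om(X)$ via Proposition~\ref{C*canonical}, and then match that condition with the classical characterization of sober spaces. First, by Proposition~\ref{C*canonical}(2) applied to $\ca{F}=\fcp(\Om(X))$, the Raney extension $(\Om(X),\ca{U}(X))$ is $\ca{CP}$-compact if and only if $\fcp(\Om(X))\se \ca{U}(X)^*$.

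Next, I would identify $\fcp(\Om(X))\cap \ca{U}(X)^*$ as precisely the set of open neighborhood filters $\{N(x):x\in X\}$. By Theorem~\ref{charC*}, the map $\up^L:\ca{U}(X)\to \ca{U}(X)^*$ is an order-isomorphism, and Lemma~\ref{cpcjp} says that $\up^L U$ is a completely prime filter of $\Om(X)$ exactly when $U\in\ca{U}(X)$ is completely join-prime. Since $\ca{U}(X)$ is the lattice of upsets of the specialization preorder (Proposition~\ref{upset}) and every upset decomposes as $U=\bcu_{x\in U}\up x$, the completely join-prime elements of $\ca{U}(X)$ are exactly the principal upsets $\up x$; under $\up^L$ these correspond to the neighborhood filters $N(x)$, as already noted in the proof of Lemma~\ref{l:N(x)-gen}.

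Combining the two steps, $(\Om(X),\ca{U}(X))$ is $\ca{CP}$-compact if and only if every completely prime filter of $\Om(X)$ arises as $N(x)$ for some $x\in X$, which is exactly the surjectivity of the canonical map $\psi_X:X\to\pt(\Om(X))$ from Lemma~\ref{toprefl}. By the characterization of sober spaces as fixpoints of the adjunction $\Om\dashv\pt$ (Theorem~\ref{classicalsoberspatial}), this is precisely the sobriety of $X$. The only non-routine step is the identification of the completely join-prime elements of $\ca{U}(X)$ with principal upsets, which is immediate from $U=\bcu_{x\in U}\up x$; everything else is a rearrangement of material already established in the background section.
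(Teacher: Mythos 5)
The paper does not actually prove this proposition; it is imported verbatim from \cite{suarez24} with no argument given, so there is no in-paper proof to compare against. Your reconstruction is the natural one and almost all of it is sound: the reduction of $\ca{CP}$-compactness to $\fcp(\Om(X))\se \ca{U}(X)^*$ via Proposition~\ref{C*canonical}(2), the identification of the completely join-prime elements of $\ca{U}(X)$ with the principal upsets $\up x$, and the use of Lemma~\ref{cpcjp} together with the isomorphism $\ca{U}(X)\cong \ca{U}(X)^*$ to conclude that $\fcp(\Om(X))\cap \ca{U}(X)^*=\{N(x):x\in X\}$ are all correct, and they do show that $\ca{CP}$-compactness is equivalent to every completely prime filter of $\Om(X)$ being a neighbourhood filter.

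The gap is in your last sentence. Surjectivity of the unit $x\mapsto N(x)$ onto $\pt(\Om(X))$ is \emph{not} the same as being a fixpoint of $\Om\dashv\pt$: the fixpoint condition requires the unit to be a homeomorphism, hence in particular injective, and with the paper's definition of sobriety (each irreducible closed set is the closure of a \emph{unique} point) injectivity is genuinely part of the statement. A two-point indiscrete space has $\{X\}$ as its only completely prime filter, which equals $N(a)=N(b)$, so $(\Om(X),\ca{U}(X))$ is $\ca{CP}$-compact while $X$ is not sober; thus "every completely prime filter is some $N(x)$" does not imply sobriety without a $T_0$ hypothesis. Your argument becomes correct once you either restrict to $T_0$ spaces (which is the implicit standing assumption throughout this section of the paper, cf.\ Theorem~\ref{t:chartd} and Lemmas~\ref{l:N(x)exact} and~\ref{l:N(x)regular}, all stated for $T_0$ spaces) or supply the injectivity of $x\mapsto N(x)$ separately; as written, the final step overstates what surjectivity buys you. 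A minor further point: the map you call $\psi_X$ in Lemma~\ref{toprefl} lands in $\rpt(\Om_R(X))$, not in $\pt(\Om(X))$; the two are identified via $\up x\leftrightarrow N(x)$, but you should say so rather than cite that lemma directly for the classical unit.
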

Motivated by this, we define a Raney extension $(L,C)$ to be \emph{sober} if it is $\ca{CP}$-compact. Note that by the characterization in Proposition \ref{C*canonical}, this is equivalent to saying that every completely prime filter of $L$ is $\up^L x$ for some $x\in C$, which is then necessarily completely join-prime by Lemma \ref{cpcjp}. Thus, for spatial Raney extensions, our definition of sobriety is equivalent to that in \cite{bezhanishvili20}. We work towards proving that any Raney extension admits a \emph{sobrification}, a completion to a sober space. For a Raney extension $(L,C)$ we call a map $\sigma:S(L,C)\ra (L,C)$ of the category $\bd{Raney}$ a \emph{sobrification} if $S(L,C)$ is sober, and if whenever $f:(M,D)\ra (L,C)$ is a morphism from a sober Raney extension, we have a commuting diagram
\[
\begin{tikzcd}
S(L,C)
\ar[r,"\sigma"]
& (L,C).\\
(M,D)
\ar[u,"f_{\sigma}"]
\ar[ur,swap,"f"]
\end{tikzcd}
\]

\begin{theorem}
For a Raney extension $(L,C)$, the map
\begin{gather*}
\sigma:(L,\opp{\ca{I}(C^*\cup \fcp(L))})\ra (L,C)\\
F\mapsto \bwe F
\end{gather*}
is its sobrification.
\end{theorem}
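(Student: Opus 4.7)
The plan is to verify four things: (i) that $(L, \opp{\ca{I}(C^* \cup \fcp(L))})$ is a well-defined Raney extension, (ii) that $\sigma$ is a morphism of Raney extensions, (iii) that it is sober, and (iv) that $\sigma$ satisfies the universal property of a sobrification. Set $\ca{F} = C^* \cup \fcp(L)$. For (i), I would apply Theorem \ref{containsprincipal} to $\ca{F}$. The collection $\ca{I}(\ca{F})$ contains all principal filters since $C^*$ already does, and each filter in $\ca{F}$ is strongly exact (for elements of $C^*$ by Theorem \ref{Rboundaries}, which places $C^* \se \fse(L)$; for completely prime filters by Theorem \ref{posetofsubloc}, which places $\fcp(L)$ inside $\fse(L)$). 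The nontrivial requirement is that $\ca{I}(\ca{F})$ is a sublocale of $\Fi(L)$. For this I would invoke Lemma \ref{meetofheyting}: the smallest sublocale of $\Fi(L)$ containing $\ca{F}$ equals $\ca{I}(\{\up a \ra F : a \in L,\ F \in \ca{F}\})$. For $F \in C^*$, $\up a \ra F \in C^*$ since $C^*$ is itself a sublocale; for $F \in \fcp(L)$, Lemma \ref{l:heytingcp} gives $\up a \ra F \in \{L, F\}$, and both belong to $\ca{F}$ since $L = \up 0 \in C^*$. Hence this smallest sublocale coincides with $\ca{I}(\ca{F})$, confirming that the latter is a sublocale.

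For (ii), the sublocale inclusion $C^* \se \ca{I}(\ca{F})$ dualizes to a coframe surjection $\opp{\ca{I}(\ca{F})} \to \opp{C^*}$, which composed with the isomorphism $\bwe : \opp{C^*} \to C$ of Theorem \ref{charC*} produces precisely the map $F \mapsto \bwe F$. It restricts to the identity on principal filters and hence on $L$, so it is a morphism of Raney extensions. For (iii), by Proposition \ref{C*canonical}, $\ca{CP}$-compactness of $(L, \opp{\ca{I}(\ca{F})})$ is equivalent to $\fcp(L) \se \ca{I}(\ca{F})$, which is immediate from $\fcp(L) \se \ca{F}$.

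For (iv), let $f : (M,D) \ra (L,C)$ be a morphism from a sober Raney extension. I would construct the lift $f_{\sigma} : (M,D) \ra (L, \opp{\ca{I}(\ca{F})})$ by applying Theorem \ref{whenlifts} to the underlying frame map $f : M \ra L$: it suffices to check that $f^{-1}(F) \in D^*$ for every $F \in \ca{I}(\ca{F})$. Since $f^{-1}$ preserves intersections and $D^*$ is closed under them, this reduces to checking on $\ca{F}$. For $F \in C^*$, $f^{-1}(F) \in D^*$ holds because $f$ is already a morphism $(M,D) \ra (L,C)$. For $F \in \fcp(L)$, a routine verification shows $f^{-1}(F) \in \fcp(M)$ (frame maps pull back completely prime filters to completely prime filters), and this lies in $D^*$ by sobriety of $(M,D)$. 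Commutativity $\sigma \circ f_{\sigma} = f$ then follows from the uniqueness built into Theorem \ref{whenlifts}: both morphisms are the (unique) extension of the same underlying frame map $f : M \ra L$ to a morphism of Raney extensions, so they coincide.

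The main obstacle I anticipate is step (i), specifically the verification that $\ca{I}(C^* \cup \fcp(L))$ is genuinely a sublocale of $\Fi(L)$. This is where the interplay between the Heyting structure of $\Fi(L)$ and the concrete description of completely prime filters as prime elements (via Lemma \ref{l:heytingcp}) is essential; without this the object $S(L,C)$ would not even be known to live in $\bd{Raney}$, and everything else would be moot.
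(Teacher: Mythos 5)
Your proposal is correct and follows essentially the same route as the paper's proof: the sublocale property of $\ca{I}(C^*\cup \fcp(L))$ via Lemmas \ref{meetofheyting} and \ref{l:heytingcp}, sobriety via $\ca{CP}$-compactness and Proposition \ref{C*canonical}, and the universal property by checking preimages of filters in $C^*$ and in $\fcp(L)$ separately through Theorem \ref{whenliftsfilters}. You simply spell out in more detail the verification that the candidate object is a Raney extension (strong exactness of the generating filters and closure under $\up a\ra -$), which the paper leaves largely implicit.
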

\begin{proof}
Observe that, as $C^*\se \Fi(L)$ is a sublocale and by Lemmas \ref{meetofheyting} and \ref{l:heytingcp}, we have that $\ca{I}(C^*\cup \fcp(L))$ is a sublocale. As $(L,\opp{\ca{I}(C^*\cup \fcp(L))})$ contains all completely prime filters of $L$, indeed, by Proposition \ref{C*canonical} it is $\ca{CP}$-compact. Since $C^*\se \ca{I}(C^*\cup \fcp(L))$, by Theorem \ref{whenliftsfilters} it means that the identity on $L$ extends to a surjective map of Raney extensions
\begin{gather*}
    \sigma:(L,\opp{\ca{I}(C^*\cup \fcp(L))})\to (L,C)\\
    F\mapsto \bwe F.
\end{gather*}
Let us show that this map has the required universal property. Suppose that $f:(M,D)\ra (L,C)$ is a Raney map from a sober Raney extension. We then have a frame map $f|_{M}:M\ra L$. By Theorem \ref{whenliftsfilters}, to show that the map lifts it suffices to show that the preimage of each filter in $\fcp(L)$ as well as each filter in $C^*$ is in $D^*$. For filters in $C^*$, this holds because there is a map $f:(M,D)\ra (L,C)$. For a completely prime filter $P\se L$, recall that we have $f^{-1}(P)\in \fcp(M)$, as preimages of completely prime filters are completely prime, and by definition of sobriety and Proposition \ref{C*canonical}, we also have $\fcp(M)\se D^*$. This map extends the frame map $f|_M$. Thus, the diagram commutes as desired.
\end{proof}

Let us now compare sobriety with spatiality for Raney extensions.

\begin{lemma}\label{cpcanonical}
    A Raney extension $(L,C)$ is sober and spatial if and only if it is $\ca{CP}$-canonical.
\end{lemma}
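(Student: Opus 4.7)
The plan is to unravel everything in terms of the collection $C^*\se \Fi(L)$ and apply the translations of $\ca{CP}$-density, $\ca{CP}$-compactness and spatiality already provided by Proposition \ref{C*canonical} and Proposition \ref{charspatiality}. Recall the dictionary: $\ca{CP}$-compactness (= sobriety by definition) says exactly $\fcp(L)\se C^*$; $\ca{CP}$-density says $C^*\se \ca{I}(\fcp(L)^*)$; spatiality says $C^*\se \ca{I}(C^*\cap \fcp(L))$.

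For the forward implication, assume $(L,C)$ is sober and spatial. Sobriety gives $\fcp(L)\se C^*$, so $C^*\cap \fcp(L)=\fcp(L)$. Spatiality then reads $C^*\se \ca{I}(\fcp(L))$, which is $\ca{CP}$-density once we observe that, since every $F\in \fcp(L)$ lies in $C^*$, we have $\up^L\bwe F=F$ (the adjunction $\bwe\dashv \up^L$ is the identity on $C^*$ by Theorem \ref{charC*}), and therefore $\fcp(L)^*=\fcp(L)$. Combining density with compactness yields $\ca{CP}$-canonicity.

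For the converse, assume $(L,C)$ is $\ca{CP}$-canonical. By definition this already delivers $\ca{CP}$-compactness, hence sobriety. For spatiality, note that $\ca{CP}$-compactness gives $\fcp(L)\se C^*$, and exactly as above this forces $\fcp(L)^*=\fcp(L)=C^*\cap \fcp(L)$. The $\ca{CP}$-density inclusion $C^*\se \ca{I}(\fcp(L)^*)$ then becomes $C^*\se \ca{I}(C^*\cap \fcp(L))$, which by Proposition \ref{charspatiality} is spatiality.

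The only delicate point is handling the operation $(-)^*$ on subsets of $\Fi(L)$. The key small lemma I would verify in passing is that whenever a filter $F$ is already in $C^*$, we have $\up^L\bwe F=F$; this is immediate from the isomorphism $\bwe:C^*\cong C:\up^L$ of Theorem \ref{charC*}, and it is what collapses $\fcp(L)^*$ to $\fcp(L)$ in the presence of $\ca{CP}$-compactness. Once this is clear, both directions are one-line manipulations of set-theoretic inclusions.
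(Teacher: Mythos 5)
Your proof is correct and follows essentially the same route as the paper: both reduce the statement to set-theoretic inclusions among collections of filters via Propositions \ref{C*canonical} and \ref{charspatiality}, using sobriety ($\fcp(L)\se C^*$) to identify $C^*\cap\fcp(L)$ with $\fcp(L)$ and conclude $C^*=\ca{I}(\fcp(L))$. Your explicit check that $\fcp(L)^*=\fcp(L)$ under compactness is a detail the paper leaves implicit (it is absorbed into the ``in particular'' clause of Proposition \ref{C*canonical}), but it is the same argument.
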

\begin{proof}
It follows from Proposition \ref{charspatiality} and by Proposition \ref{C*canonical} that a Raney extension $(L,C)$ is sober and spatial if and only if $C^*=\ca{I}(\fcp(L))$. This holds if and only if the Raney extension is $\ca{CP}$-canonical. 
\end{proof}

\begin{proposition}
A spatial frame $L$ admits a unique sober and spatial Raney extension, up to isomorphism. This is the free spatial Raney extension $(L,\opp{\ca{I}(\fcp(L))})$.
\end{proposition}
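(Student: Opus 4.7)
The plan is to reduce the condition ``sober and spatial'' to $\ca{CP}$-canonicity and then apply the existence-and-uniqueness machinery of Theorem~\ref{containsprincipal}. First, I would invoke Lemma~\ref{cpcanonical} to see that a Raney extension $(L,C)$ is sober and spatial precisely when it is $\ca{CP}$-canonical; by Proposition~\ref{C*canonical} this pins $C^*$ down uniquely as $\ca{I}(\fcp(L))$. Uniqueness up to isomorphism is then immediate from Corollary~\ref{c:=*iso}, since any two such extensions over $L$ would have identical starred collections and hence be connected by an isomorphism restricting to the identity on $L$.

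For existence, I would apply Theorem~\ref{containsprincipal} to the family $\ca{F} = \fcp(L)$, which requires checking three hypotheses. That $\ca{I}(\fcp(L))$ contains all principal filters is exactly the spatiality characterization from Proposition~\ref{famouschar}. That every completely prime filter is strongly exact is immediate, since complete primeness is already a much stronger closure condition than closure under strongly exact meets. The only condition needing a genuine argument is that $\ca{I}(\fcp(L))^{op}$ is a subcolocale of $\Fi(L)^{op}$; I would handle this by combining Lemma~\ref{meetofheyting} with Lemma~\ref{l:heytingcp}, since the latter shows that $\up a \ra P \in \{L, P\}$ for every principal filter $\up a$ and every $P \in \fcp(L)$, both members being manifestly in $\ca{I}(\fcp(L))$ (the top element $L$ being the empty intersection). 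Thus $\ca{I}(\fcp(L))$ is stable under the generating Heyting operations produced by Lemma~\ref{meetofheyting}, which is precisely the subcolocale condition.

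With the three hypotheses in hand, Theorem~\ref{containsprincipal} delivers the $\ca{CP}$-canonical Raney extension concretely as $(L, \opp{\ca{I}(\fcp(L))})$, and the preceding proposition already identifies this object as the free spatial Raney extension over $L$. I do not anticipate a serious obstacle here: the proof is essentially an assembly of results invoked earlier, with the only computational piece being the subcolocale check, which collapses to the one-line application of Lemma~\ref{l:heytingcp} described above.
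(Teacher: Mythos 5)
Your proposal follows essentially the same route as the paper: reduce ``sober and spatial'' to $\ca{CP}$-canonicity via Lemma \ref{cpcanonical}, get uniqueness from the uniqueness clause of Theorem \ref{containsprincipal}, and get existence by verifying the hypotheses of that theorem for $\ca{F}=\fcp(L)$. The paper is terser: it simply cites the background fact (via Proposition \ref{famouschar}) that $(L,\opp{\ca{I}(\fcp(L))})$ is a Raney extension whenever $L$ is spatial, rather than re-checking the three conditions, but your explicit verification --- in particular the subcolocale check via Lemmas \ref{meetofheyting} and \ref{l:heytingcp} --- is sound and mirrors arguments the paper uses elsewhere (e.g.\ in the sobrification theorem). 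One caveat: your justification that completely prime filters are strongly exact (``complete primeness is a much stronger closure condition'') is not actually an argument, since complete primeness concerns inaccessibility by \emph{joins} while strong exactness is closure under certain \emph{meets}; the fact is nevertheless true and available from Theorem \ref{posetofsubloc}, which gives $\ca{I}(\fcp(L))\se \fse(L)$, so this is a cosmetic rather than a genuine gap.
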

\begin{proof}
   By Lemma \ref{cpcanonical}, when a sober and spatial Raney extension exists, it is unique, up to isomorphism, by Theorem \ref{containsprincipal}. If $L$ is a spatial frame, then $(L,\opp{\ca{I}(\fcp(L))})$ is a Raney extension by Proposition \ref{famouschar}, and it is the $\ca{CP}$-canonical Raney extension by Theorem \ref{containsprincipal}.
\end{proof}

\subsection{The \texorpdfstring{T\textsubscript{D}}{TD} axiom}

Let us now look at the Raney analogue of the $T_D$ axiom. 

\begin{lemma}\label{l:N(x)exact}
    A $T_0$ space is $T_D$ if and only if all neighborhood filters are exact.
\end{lemma}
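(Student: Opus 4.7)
The plan is to reduce the statement to Theorem~\ref{t:ball}, which says that a $T_0$ space $X$ is $T_D$ if and only if every exact meet $\bwe_i U_i$ in $\Om(X)$ coincides with the set-theoretic intersection $\bca_i U_i$ (equivalently, the embedding $\Om(X)\se \ca{U}(X)$ preserves exact meets). The goal is thus to show that this ``meet = intersection'' condition is equivalent to every neighborhood filter $N(x) = \{U \in \Om(X) : x \in U\}$ being closed under exact meets.

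For the forward direction, assume $X$ is $T_D$ (and $T_0$). Fix $x \in X$ and let $\bwe_i U_i$ be any exact meet in $\Om(X)$ with each $U_i \in N(x)$. By Theorem~\ref{t:ball}, $\bwe_i U_i = \bca_i U_i$, and since $x \in U_i$ for each $i$, we get $x \in \bwe_i U_i$, so $\bwe_i U_i \in N(x)$. Hence $N(x)$ is exact.

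For the converse, assume every $N(x)$ is exact. Let $\bwe_i U_i$ be any exact meet in $\Om(X)$. The inclusion $\bwe_i U_i \se \bca_i U_i$ is automatic. For the reverse inclusion, take $x \in \bca_i U_i$; then $U_i \in N(x)$ for all $i$, and since $N(x)$ is assumed exact, $\bwe_i U_i \in N(x)$, i.e.\ $x \in \bwe_i U_i$. Thus $\bwe_i U_i = \bca_i U_i$ for every exact meet, and Theorem~\ref{t:ball} gives that $X$ is $T_D$.

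There is no real obstacle here: the proof is essentially a pointwise unpacking of what Theorem~\ref{t:ball} already asserts. The only conceptual step is recognizing that ``the exact meet is the set-theoretic intersection'' can be tested pointwise through neighborhood filters, which is immediate from the definition of $N(x)$.
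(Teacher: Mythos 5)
Your proof is correct, but it takes a genuinely different route from the paper's. You reduce the lemma to Theorem~\ref{t:ball} (exact meets in $\Om(X)$ coincide with intersections iff $X$ is $T_D$) and observe that the condition ``$\bwe_i U_i=\bca_i U_i$ for every exact meet'' can be tested pointwise: membership of a point $x$ in the intersection versus the meet is exactly the question of whether $N(x)$ is closed under that exact meet. Both directions of your argument are sound, and there is no circularity, since Theorem~\ref{t:ball} is an external result quoted in the background rather than something the paper derives from this lemma. The paper instead argues through the prime-element picture: it identifies $N(x)$ with the completely prime filter $\Om(X){\sm}{\da}p$ for the prime $p=X{\sm}\overline{\{x\}}$, invokes Proposition~\ref{allpxarecov} ($X$ is $T_D$ iff all such primes are covered), and then applies Proposition~\ref{p:cpexact} (a completely prime filter is exact iff its prime is covered). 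What your approach buys is a short, self-contained point-set verification that in effect establishes the equivalence of the filter condition with item (5) of Theorem~\ref{t:chartd} directly; what the paper's approach buys is that it routes the lemma through the covered-prime machinery (Lemma~\ref{bigmeet}, Proposition~\ref{p:cpexact}) that is reused elsewhere, e.g.\ in identifying $\rpt(L,\opp{\fe(L)})$ with $\pt_D(L)$, and it keeps Theorem~\ref{t:ball} confined to the single implication (5)$\Rightarrow$(1) in Theorem~\ref{t:chartd}.
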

\begin{proof}
    Suppose that $X$ is a $T_D$ topological space. Neighborhood filters are completely prime, and by Proposition \ref{allpxarecov} all primes of the form $X{\sm}\overline{\{x\}}$ are covered. Hence, by the characterization in Proposition \ref{p:cpexact}, the corresponding neighborhood filters are exact. Conversely, if $X$ is not $T_D$ there must be a point $x\in X$ whose prime is not covered, and by Proposition \ref{p:cpexact} again, this means that its completely prime filter is not exact.    
\end{proof}

\begin{theorem}\label{t:chartd}
The following are equivalent for a $T_0$ space $X$.
\begin{enumerate}
    \item The space $X$ is $T_D$. 
    \item The Raney extension $(\Om(X),\ca{U}(X))$ is $\ca{E}$-dense.
    \item The Raney extension $(\Om(X),\ca{U}(X))$ is $\ca{E}$-canonical.
    \item The Raney extension $(\Om(X),\ca{U}(X))$ is isomorphic to $(\Om(X),\fe(\Om(X)))$.
    \item The inclusion $\Om(X)\se \ca{U}(X)$ preserves exact meets.
\end{enumerate}
\end{theorem}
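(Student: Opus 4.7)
The plan is to prove the equivalences by combining Ball's theorem with the canonicity machinery already developed. Since Corollary \ref{c:all-e-comp} gives that every Raney extension is automatically $\ca{E}$-compact, conditions (2) and (3) are equivalent for free, and by the uniqueness clause of Theorem \ref{containsprincipal} together with Proposition \ref{C*canonical}, (3) and (4) are equivalent. The equivalence (1) $\Leftrightarrow$ (5) is exactly the cited Theorem \ref{t:ball}. So the only genuine implications to check are $(1) \Rightarrow (4)$ and $(4) \Rightarrow (1)$, which reduces the whole theorem to identifying $\ca{U}(X)^*$ with $\fe(\Om(X))$ precisely when $X$ is $T_D$.

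For $(1) \Rightarrow (4)$, I will start from the $T_D$ assumption and apply Lemma \ref{l:N(x)exact} to conclude that every neighborhood filter $N(x)$ is exact. By Lemma \ref{l:N(x)-gen}, $\ca{U}(X)^* = \ca{I}(\{N(x):x\in X\})$, and since $\fe(\Om(X))$ is a sublocale of $\Fi(\Om(X))$ (and hence closed under set-theoretical intersections), we obtain $\ca{U}(X)^* \subseteq \fe(\Om(X))$. The reverse inclusion is automatic from Theorem \ref{Rboundaries}, which places every Raney extension over $\Om(X)$ in the interval $[\fe(\Om(X)), \fse(\Om(X))]$. Combining both inclusions gives $\ca{U}(X)^* = \fe(\Om(X))$, and Corollary \ref{c:=*iso} then delivers the desired isomorphism $(\Om(X), \ca{U}(X)) \cong (\Om(X), \opp{\fe(\Om(X))})$.

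For $(4) \Rightarrow (1)$, I go through $(4) \Rightarrow (3) \Rightarrow (2) \Rightarrow (1)$, though really only the last step is substantive. Assume $(2)$; by Corollary \ref{c:all-e-comp} and Proposition \ref{C*canonical}, the Raney extension $(\Om(X), \ca{U}(X))$ is $\ca{E}$-canonical, so by Theorem \ref{containsprincipal} we again have $\ca{U}(X)^* = \fe(\Om(X))$. Every neighborhood filter $N(x)$ lives in $\ca{U}(X)^*$ (being $\up^{\Om(X)}\up x$), so every $N(x)$ is exact, and Lemma \ref{l:N(x)exact} concludes that $X$ is $T_D$.

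The main obstacle I anticipate is purely bookkeeping: keeping straight the distinction between a filter collection $\ca{F} \se \Fi(L)$, its image $\ca{F}^*$ inside $C^*$, and the density/compactness translation via Proposition \ref{C*canonical}. The key conceptual input in both directions is the bracketing result Theorem \ref{Rboundaries}, which forces $\fe(\Om(X))$ to sit inside every $\ca{U}(X)^*$ and thereby lets $T_D$-ness collapse the interval to a single point.
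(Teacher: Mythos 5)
Your proof is correct; every step is supported by results available in the paper, and the key inputs (Lemma \ref{l:N(x)exact}, Lemma \ref{l:N(x)-gen}, Corollary \ref{c:all-e-comp}, Theorem \ref{containsprincipal}) are exactly the ones the paper uses. The decomposition is, however, organized differently. The paper runs a single cycle $(1)\Rightarrow(2)\Rightarrow(3)\Leftrightarrow(4)\Rightarrow(5)\Rightarrow(1)$: it gets $(1)\Rightarrow(2)$ by observing that the principal upsets $\up x=\bwe N(x)$ join-generate $\ca{U}(X)$ with each $N(x)$ exact, handles $(2)\Rightarrow(3)\Leftrightarrow(4)$ exactly as you do, and then proves $(4)\Rightarrow(5)$ by a short direct argument (the least upper bound of the family $\up U_i$ in $\fe(\Om(X))$ is $\up\bwe_i U_i$ by the definition of exact filter), so that only the implication ``preserves exact meets $\Rightarrow$ $T_D$'' of Theorem \ref{t:ball} is ever invoked. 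You instead take both directions of Theorem \ref{t:ball} as a black box for $(1)\Leftrightarrow(5)$, and close the remaining loop by pinning down the filter-level identity $\ca{U}(X)^*=\fe(\Om(X))$ (via Lemma \ref{l:N(x)-gen} plus intersection-closedness of the exact filters for one inclusion, and the bracketing $\fe(\Om(X))\se\ca{U}(X)^*$ for the other, then Corollary \ref{c:=*iso}). Your route leans more heavily on the external citation and never connects (4) to (5) directly, whereas the paper's route effectively reproves the forward half of Ball's characterization internally; on the other hand, your identification of $\ca{U}(X)^*$ with $\fe(\Om(X))$ is slightly more economical than arguing density and compactness separately. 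Both are complete arguments.
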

\begin{proof}
Let $X$ be a $T_0$ space. If this is a $T_D$ space, then by Lemma \ref{l:N(x)exact} all neighborhood filters are exact, and this means that all filters of the form $\up^{\Om(X)}\up x$ for $x\in X$ are exact. As for all $x\in X$ we have $\bca \up^{\Om(X)}\up x=\up x$, and the principal filters generate the collection $\ca{U}(X)$, (2) follows. Suppose, now, that (2) holds. By Corollary \ref{c:all-e-comp}, the Raney extension is $\ca{E}$-compact, hence $\ca{E}$-canonical by our initial hypothesis. Items (3) and (4) are equivalent by the uniqueness result of Theorem \ref{containsprincipal}. Suppose that (4) holds. We will identify $(\Om(X),\ca{U}(X))$ with the isomorphic Raney extension $(\Om(X),\opp{\fe(\Om(X))})$. If $U_i\in \Om(X)$ is a family such that their meet is exact, then we have that the least upper bound of the family $\up U_i$ in $\fe(\Om(X))$ must be $\up \bwe_i U_i$, by definition of exact filter. This means that the meet is preserved by the embedding $\Om(X)\to \opp{\fe(\Om(X))}$. Finally, (5) implies (1) by the characterization in Theorem \ref{t:ball}.
\end{proof}

Motivated by the last result, we call a Raney extension $T_D$ if it is $\ca{E}$-dense. All Raney extensions are $\ca{E}$-compact, by Corollary \ref{c:all-e-comp}. Thus, the $T_D$ Raney extensions are those which are $\ca{E}$-canonical, and by the uniqueness result of Theorem \ref{containsprincipal} these are the Raney extensions which are, up to isomorphism, $(L,\opp{\fe(L)})$ for some frame $L$.

\begin{proposition}
    The forgetful functor $\pi_1:\bd{Raney}\to \bd{Frm}$ restricts to an isomorphism between the category of $T_D$ Raney extensions and $\bd{Frm}_{\ca{E}}$.
\end{proposition}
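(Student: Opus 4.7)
The plan is to exploit the characterization just stated: up to isomorphism, the $T_D$ Raney extensions are exactly the pairs of the form $(L,\opp{\fe(L)})$ for $L$ a frame. This means that on objects the restriction of $\pi_1$ is essentially a bijection, with inverse $L \mapsto (L,\opp{\fe(L)})$. So the heart of the argument is understanding what $\pi_1$ does to morphisms between Raney extensions of this specific shape, and checking that this gives a bijection onto exact frame maps.

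First I would show that $\pi_1$ lands in $\bd{Frm}_{\ca{E}}$ when restricted to $T_D$ Raney extensions. If $g: (L,\opp{\fe(L)}) \to (M,\opp{\fe(M)})$ is a morphism of Raney extensions, then by definition its restriction $f = g|_L : L \to M$ is a frame map, and the existence of $g$ extending $f$ witnesses, via Proposition \ref{whenelifts}, that $f$ is exact. Thus $\pi_1$ sends morphisms to exact frame maps.

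Next I would show the correspondence on morphisms is a bijection. Surjectivity is immediate from the converse direction of Proposition \ref{whenelifts}: every exact $f : L \to M$ admits an extension to a Raney map $f_{\ca{E}} : (L,\opp{\fe(L)}) \to (M,\opp{\fe(M)})$. For injectivity, I would invoke Theorem \ref{whenliftsfilters}: whenever a Raney extension of a frame map $f$ exists, it is forced to act as $c \mapsto \clo{D^*}(f[c])$, so it is uniquely determined by $f$. Hence the restriction of $\pi_1$ is fully faithful and essentially surjective onto $\bd{Frm}_{\ca{E}}$, and in fact it is strictly bijective on objects if we identify each $T_D$ extension with its canonical representative $(L,\opp{\fe(L)})$.

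The main obstacle, if any, is really just bookkeeping: making sure the inverse assignment $L \mapsto (L,\opp{\fe(L)})$ and $f \mapsto f_{\ca{E}}$ is functorial, i.e., preserves identities and composition. Identities are clear because the identity frame map on $L$ trivially extends to the identity on $(L,\opp{\fe(L)})$. For composition, given composable exact maps $f : L \to M$ and $g : M \to N$, both $(g \circ f)_{\ca{E}}$ and $g_{\ca{E}} \circ f_{\ca{E}}$ are Raney extensions of the frame map $g \circ f$, and the uniqueness from Theorem \ref{whenliftsfilters} forces them to coincide. This assembles the inverse functor and concludes that $\pi_1$ restricts to an isomorphism of categories.
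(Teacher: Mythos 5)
Your proof is correct and follows essentially the same route as the paper: both directions rest on Proposition \ref{whenelifts} (exactness $\Leftrightarrow$ extendability to the $\fe$-extensions) together with the uniqueness of the lift from Theorem \ref{whenliftsfilters}. Your version is in fact slightly more explicit than the paper's about faithfulness and the functoriality of the inverse, but the underlying argument is the same.
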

\begin{proof}
    For a map $f:(L,\opp{\fe(L)})\to (M,\opp{\fe(L)})$ of $T_D$ Raney extensions, we must have by Theorem \ref{whenliftsfilters} that the restriction $f|_{L}:L\to M$ is a map in $\bd{Frm}_{\ca{E}}$. Thus, the restriction and co-restriction of $\pi_1$ is well-defined. The inverse functor maps each frame $L$ to the Raney extension $(L,\opp{\fe(L)})$, and this assignment is functorial by Proposition \ref{whenelifts}.
\end{proof}

For a Raney extension $(L,C)$, we call a \emph{$T_D$ reflection} a map $\delta:(L,C)\to D(L,C)$ such that $D(L,C)$ is $T_D$, and such that whenever $f:(L,C)\to (M,D)$ is a map to a $T_D$ Raney extension, we have a commuting diagram as follows.
\[
\begin{tikzcd}
(L,C)
\ar[r,"\delta"]
\ar[dr,"f",swap]
& D(L,C)
\ar[d,"f_{\delta}"]\\
& (M,D).
\end{tikzcd}
\]
Let us call $\bd{Raney}_{\ca{E}}$ the category of Raney extensions with maps such that their restriction to the frame components is exact. 
\begin{proposition}
    In the category $\bd{Raney}_{\ca{E}}$, every Raney extension admits a $T_D$ reflection.
\end{proposition}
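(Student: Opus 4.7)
My plan is to take $D(L,C) := (L, \opp{\fe(L)})$, the essentially unique $T_D$ Raney extension on the frame $L$, and to let $\delta : (L,C) \to (L,\opp{\fe(L)})$ be the Raney morphism extending $\mb{id}_L$. To see that such a morphism exists, I invoke Corollary \ref{c:all-e-comp}: every Raney extension is $\ca{E}$-compact, which by Proposition \ref{C*canonical} is equivalent to $\fe(L) \se C^*$. Thus $\mb{id}_L^{-1}(F) = F \in C^*$ for every $F \in \fe(L)$, and Theorem \ref{whenliftsfilters} supplies the lift $\delta$. Since the frame restriction of $\delta$ is the identity on $L$, which is trivially exact, $\delta$ lies in $\bd{Raney}_{\ca{E}}$.

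For the universal property, let $f : (L,C) \to (M,D)$ be a morphism in $\bd{Raney}_{\ca{E}}$ with $(M,D)$ a $T_D$ Raney extension. Since $T_D$ amounts to $\ca{E}$-canonical, Proposition \ref{C*canonical} gives $D^* = \fe(M)$. The frame restriction $f|_L$ is exact by hypothesis, so by Proposition \ref{whenelifts} it extends to a morphism $(L,\opp{\fe(L)}) \to (M,\opp{\fe(M)})$; unpacking this through Theorem \ref{whenliftsfilters} yields $f|_L^{-1}(F) \in \fe(L)$ for every $F \in \fe(M) = D^*$. Applying Theorem \ref{whenliftsfilters} again, $f|_L$ therefore lifts to a morphism $f_\delta : (L,\opp{\fe(L)}) \to (M,D)$, which is in $\bd{Raney}_{\ca{E}}$ because its frame component is $f|_L$.

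Commutativity of the reflection triangle is then a matter of uniqueness: both $f_\delta \circ \delta$ and $f$ are $\bd{Raney}$-morphisms $(L,C) \to (M,D)$ whose restrictions to the frame $L$ coincide with $f|_L$, so by the uniqueness clause of Theorem \ref{whenliftsfilters} they must be equal. The same uniqueness shows that $f_\delta$ is the only morphism fitting into the triangle. I do not expect a genuine obstacle here; the proof is essentially categorical bookkeeping around Theorem \ref{whenliftsfilters} and Proposition \ref{whenelifts}. The one conceptual point — and the reason the statement is restricted to $\bd{Raney}_{\ca{E}}$ rather than all of $\bd{Raney}$ — is that exactness of $f|_L$ is precisely what guarantees a lift through the smallest possible coframe component $\opp{\fe(L)}$; without it, Proposition \ref{whenelifts} would not apply and the reflection could fail to exist.
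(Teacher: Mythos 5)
Your proposal is correct and follows essentially the same route as the paper: the reflection is $(L,\opp{\fe(L)})$, the unit is the lift of $\mathrm{id}_L$ guaranteed by Corollary \ref{c:all-e-comp} together with Theorem \ref{whenliftsfilters}, and the universal property comes from exactness of $f|_L$ forcing preimages of filters in $D^*=\fe(M)$ to be exact. Your explicit appeal to the uniqueness clause of Theorem \ref{whenliftsfilters} to get commutativity of the triangle is a small point the paper leaves implicit, but it is not a different argument.
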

\begin{proof}
   We claim that the required map for $(L,C)$ is $\delta:(L,C)\to (L,\opp{\fe(L)})$ defined as $c\mapsto \mb{cl}_{\ca{E}}(\up^L c)$.
The Raney extension $(L,\opp{\fe(L)})$ is $T_D$, by definition. The identity on $L$ is exact. By Corollary \ref{c:all-e-comp}, we have $\opp{\fe(L)}\se C^*$, and so by Theorem \ref{whenliftsfilters} the map above is a map of Raney extensions, and it is a map in $\bd{Raney}_{\ca{E}}$. Now, suppose that there is a $T_D$ Raney extension $(M,D)$ such that we have a morphism $f:(L,C)\to (M,D)$ in $\bd{Raney}_{\ca{E}}$. Because $(M,D)$ is $T_D$, we must have $D^*=\opp{\fe(L)}$. By assumption on $f$, then, the preimage map relative to $f|_{L}$ maps filters in $D^*$ to exact filters of $L$. Hence, by Theorem \ref{whenliftsfilters}, we have a map $f_{\delta}:(L,\opp{\fe(L)})\to (M,D)$ as required. Finally, this map is in $\bd{Raney}_{\ca{E}}$ as it extends $f|_L$.
\end{proof}

\subsection{The \texorpdfstring{T\textsubscript{1}}{T1} axiom}
Let us now look at the $T_1$ axiom. The axiom $T_1$, too, can be characterized in terms of filters.
\begin{lemma}\label{l:N(x)regular}
    A $T_0$ space is $T_1$ if and only if all its neighborhood filters are regular.
\end{lemma}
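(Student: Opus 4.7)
The plan is to reduce the statement, via Proposition \ref{minimalcpf}, to a fact about primes of $\Om(X)$. For each point $x\in X$ set $p_x=X\sm \overline{\{x\}}$; this is a prime of $\Om(X)$, and the neighborhood filter $N(x)$ coincides with $\Om(X)\sm{\da p_x}$ (an open $U$ fails $x\in U$ precisely when $U\se X\sm \overline{\{x\}}$). Applied to $p_x$, Proposition \ref{minimalcpf} then states that $N(x)$ is regular if and only if $p_x$ is a maximal prime of $\Om(X)$. The lemma thus reduces to showing that a $T_0$ space is $T_1$ if and only if every $p_x$ is maximal in $\Om(X)$.

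For the forward direction, I will use that in a $T_1$ space $\overline{\{x\}}=\{x\}$, so $p_x=X\sm\{x\}$. Any open $U$ strictly containing $p_x$ must then contain both $X\sm\{x\}$ and $x$, forcing $U=X$. Hence no open (and a fortiori no prime) lies strictly between $p_x$ and the top, so $p_x$ is maximal.

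For the converse I argue by contrapositive. If a $T_0$ space $X$ fails to be $T_1$, pick $x\neq y$ with $y\in \overline{\{x\}}$. Then $\overline{\{y\}}\se \overline{\{x\}}$, whence $p_x\se p_y$; the $T_0$ axiom forces $\overline{\{x\}}\neq \overline{\{y\}}$ and therefore $p_x\subsetneq p_y$; and $y\notin p_y$ ensures $p_y\neq X$. Thus $p_y$ is a prime strictly between $p_x$ and $X$, refuting maximality of $p_x$.

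There is no serious obstacle here: once the reformulation via Proposition \ref{minimalcpf} is in place, the rest is a direct unwinding of the specialization-order/closure dictionary, with the $T_0$ hypothesis used only to guarantee that $p_x$ and $p_y$ are actually distinct.
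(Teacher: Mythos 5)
Your proof is correct, but it takes a genuinely different route from the paper's. You reduce the statement to Proposition \ref{minimalcpf} by identifying $N(x)$ with the completely prime filter $\Om(X){\sm}{\da}p_x$ attached to the prime $p_x=X{\sm}\overline{\{x\}}$, so that regularity of $N(x)$ becomes maximality of $p_x$, and you then translate the $T_1$ axiom into maximality of all the point primes via the closure/specialization dictionary. The paper instead argues directly from the characterization of regular filters in Proposition \ref{reg}: for the forward direction it exhibits $N(x)$ as $\{U:U\cup(X{\sm}\{x\})=X\}$, and for the converse it extracts from regularity and complete primality a single open $V$ with $N(x)=\{U:U\cup V=X\}$ and deduces ${\da}x\se V^c\se{\up}x$ in the specialization order. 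The mathematical content is essentially the same --- the paper's proof of Proposition \ref{minimalcpf} itself rests on Proposition \ref{reg} --- but your version is more modular: it reuses the frame-theoretic fact rather than re-deriving its topological instance, and it makes visible the link with the earlier result that the spectrum of $(L,\opp{\fr(L)})$ is $\maxpt(L)$ for subfit $L$. All the steps check out: the identification $N(x)=\Om(X){\sm}{\da}p_x$ is valid because an open set missing $x$ is disjoint from $\overline{\{x\}}$, your forward direction establishes maximality in the strong sense used in the paper (namely ${\up}p_x=\{p_x,X\}$ in $\Om(X)$), and the $T_0$ hypothesis is correctly invoked only to get $p_x\subsetneq p_y$ in the contrapositive.
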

\begin{proof}
    Suppose that $X$ is a $T_1$ space, and let $x\in X$. As $X$ is $T_1$, the set $X{\sm}\{x\}$ is open. We have $N(x)=\{U\in \Om(X):U\cup (X{\sm}\{x\})=X\}$. By the characterization of regular filters in Proposition \ref{reg}, this is a regular filter. For the converse, suppose that $X$ is a $T_0$ space where all neighborhood filters are regular. Let $x\in X$. We will show that $\{x\}$ is closed by showing $\da x=\{x\}$. By the characterization in Proposition \ref{reg}, and because neighborhood filters are completely prime, there is some open $V\in \Om(X)$ such that: 
\[
N(x)=\{U\in \Om(X):U\cup V=X\}.
\]
Observe that $\bca N(x)\cup V=\up x\cup V=X$, thus $V^c\se \up x$. Since $\emptyset \notin N(x)$, we have $V=\emptyset \cup V\neq X$. Then, also $V\cup V\neq X$, from which $x\notin V$. As $V^c$ is a downset in the specialization order, we have $\da x\se V^c$. But this means $\da x\se V^c\se \up x$, hence $\da x=\{x\}$. 
\end{proof}

\begin{lemma}\label{maximalboolean}
For a frame $L$, the Booleanization $\bl(0)$ is maximal among the Boolean sublocales, meaning that for each $x\in L$ we have that $\bl(0)\se \bl(x)$ implies $x=0$.
\end{lemma}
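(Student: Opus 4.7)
The plan is to observe that the hypothesis $\bl(0)\se \bl(x)$ forces the element $0$ itself to lie in $\bl(x)$, and then to use the fact that $x$ is the bottom element of $\bl(x)$ to conclude $x\leq 0$.

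More concretely, I would first note that $0\in \bl(0)$. This is immediate from the definition $\bl(0)=\{y\ra 0:y\in L\}$ by taking $y=1$, since $1\ra 0=0$. Hence the assumed inclusion $\bl(0)\se \bl(x)$ yields $0\in \bl(x)$.

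Next, I would verify that $x$ is the least element of $\bl(x)=\{y\ra x:y\in L\}$. For any $y\in L$ we have $y\we x\leq x$, which is equivalent to $x\leq y\ra x$ by the adjunction defining Heyting implication. Therefore every element of $\bl(x)$ is above $x$. Combining this with $0\in \bl(x)$ gives $x\leq 0$, i.e. $x=0$.

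There is no real obstacle here: the proof is essentially a two-line unfolding of the definitions of $\bl(-)$ and Heyting implication, together with the observation that $\bl(a)$ always contains $a$ as its bottom element. The only thing one needs to keep straight is the direction of the order, and that the bottom of $\bl(x)$ is $x$ (not $0$), which is what makes the statement nontrivial in appearance despite its short proof.
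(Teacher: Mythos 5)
Your proof is correct and follows essentially the same route as the paper: both arguments reduce the inclusion $\bl(0)\se \bl(x)$ to the membership $0\in \bl(x)$ and then use the fact that $y\ra x\geq x$ for all $y$ (i.e.\ that $y\ra(-)$ is inflationary, so $x$ is the bottom of $\bl(x)$) to conclude $x\leq 0$. No issues.
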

\begin{proof}
If we have $\bl(0)\se \bl(x)$, then we must have that $0\in \bl(x)$, and this means that $a\ra x= 0$ for some $a\in L$. But the assignment $a\ra -$ is inflationary, and so $x\leq 0$. 
\end{proof}

\begin{lemma}\label{l:t1P(x)}
    For a spatial frame $\Om(X)$, the pair $(\Om(X),\ca{P}(X))$ is a Raney extension, and $\ca{P}(X)^*=\opp{\fr(\Om(X))}$.
\end{lemma}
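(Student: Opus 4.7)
The plan is to verify the two claims in sequence. That $(\Om(X),\ca{P}(X))$ is a Raney extension reduces — after noting that the coframe structure on $\ca{P}(X)$ and preservation of the frame operations by the inclusion $\Om(X) \se \ca{P}(X)$ are automatic — to checking (i) meet-generation of $\ca{P}(X)$ by $\Om(X)$ and (ii) preservation of strongly exact meets. Condition (i) is equivalent to $\ca{P}(X) = \ca{U}(X)$, which by Proposition~\ref{upset} is precisely the $T_1$ hypothesis implicit in this subsection, and (ii) then follows from Proposition~\ref{p:pres-se} applied to $\Om(X) \se \ca{U}(X) = \ca{P}(X)$.

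For the identification $\ca{P}(X)^* = \opp{\fr(\Om(X))}$, I would first recall that elements of $\ca{P}(X)^*$ are exactly the filters $\up^{\Om(X)} A = \{U \in \Om(X) : A \se U\}$ as $A$ ranges over subsets of $X$, and observe the basic identity $\up^{\Om(X)} A = \bca_{x \in A} N(x)$, where $N(x) = \up^{\Om(X)} \{x\}$. For the inclusion $\ca{P}(X)^* \se \opp{\fr(\Om(X))}$, I would use Lemma~\ref{l:N(x)regular} — under the $T_1$ hypothesis every $N(x)$ is regular — together with the fact from Proposition~\ref{reg} that $\fr(\Om(X))$ is the Booleanization of $\Fi(\Om(X))$ and so is closed under arbitrary intersections; hence each $\up^{\Om(X)} A$ is regular.

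For the reverse inclusion $\opp{\fr(\Om(X))} \se \ca{P}(X)^*$, I would apply the characterization in Proposition~\ref{reg}: every regular filter is an intersection of filters of the form $\{U \in \Om(X) : U \cup V = X\}$ for $V \in \Om(X)$. Each such filter equals $\up^{\Om(X)}(X \sm V)$, and combined with the easy identity $\bca_i \up^{\Om(X)} A_i = \up^{\Om(X)}(\bcu_i A_i)$, this exhibits every regular filter as $\up^{\Om(X)} A$ for some $A \se X$, i.e.\ an element of $\ca{P}(X)^*$. The argument is essentially bookkeeping; the only point that requires care is the implicit $T_1$ assumption — needed both for $\ca{P}(X)$ itself to qualify as a Raney extension and for the $N(x)$ to be regular — and the correspondence between subsets of $X$ and their associated filters under the Galois connection $\bwe \dashv \up^{\Om(X)}$.
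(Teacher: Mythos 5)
Your proof is correct, but it diverges from the paper's own argument on one of the two inclusions. The containment $\fr(\Om(X))\se \ca{P}(X)^*$ is handled the same way in both: the generators $\{U\in\Om(X):U\cup V=X\}$ of regular filters are exactly the filters $\up^{\Om(X)}(X{\sm}V)$, and $\ca{P}(X)^*$ is closed under intersections. For the reverse containment the paper does not argue directly; it observes that $\ca{P}(X)^*$ is a \emph{Boolean} subcolocale of $\opp{\Fi(\Om(X))}$ (via the isomorphism $\ca{P}(X)\cong\ca{P}(X)^*$ of Theorem \ref{charC*}) and then invokes Lemma \ref{maximalboolean} --- maximality of the Booleanization $\fr(\Om(X))$ among Boolean sublocales --- to upgrade the single inclusion to an equality. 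You instead prove $\ca{P}(X)^*\se\fr(\Om(X))$ head-on, writing $\up^{\Om(X)}A=\bca_{x\in A}N(x)$ and quoting Lemma \ref{l:N(x)regular} together with intersection-closure of $\fr(\Om(X))$. Both routes are valid; yours has the merit of making the role of the $T_1$ hypothesis completely explicit (the paper's Boolean-maximality step also needs it, since $\ca{P}(X)^*\cong\ca{P}(X)$ is only available once $(\Om(X),\ca{P}(X))$ is known to be a Raney extension), and of actually verifying the first assertion of the lemma --- meet-generation forces $\ca{U}(X)=\ca{P}(X)$, and strong exactness is inherited from Proposition \ref{p:pres-se} --- which the paper's proof passes over in silence. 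Your remark that ``spatial'' in the statement must be read as ``$T_1$'' is well taken: without it meet-generation already fails (e.g.\ for the Sierpi\'nski space), so the printed hypothesis is too weak and both proofs tacitly use the stronger one, consistent with how the lemma is applied in the theorem that follows it.
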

\begin{proof}
    Let $X$ be a space, and consider $(\Om(X),\fr(\Om(X)))$. We have that the subcolocale $\ca{P}(X)^*\se \opp{\Fi(\Om(X))}$ is Boolean, by the isomorphism of Theorem \ref{charC*}. We claim that all filters in $\ca{P}(X)$ are regular. This holds because for $S\se X$ we have $\up^{\Om(X)}S=\{U\in \Om(X):S^c\cup U=X\}$, and, indeed $S^c\in \ca{P}(X)$. Thus, we have $\fr(\Om(X))\se \ca{P}(X)^*$. By Lemma \ref{maximalboolean}, then, $\fr(\Om(X))=\ca{P}(X)^*$. 
\end{proof}

\begin{theorem}
The following are equivalent for a $T_0$ space $X$.
\begin{enumerate}
    \item The space $X$ is $T_1$. 
    \item The Raney extension $(\Om(X),\ca{U}(X))$ is $(\Om(X),\ca{P}(X))$.
    \item The Raney extension $(\Om(X),\ca{U}(X))$ is $\ca{R}$-dense.
    \item The Raney extension $(\Om(X),\ca{U}(X))$ is $\ca{R}$-canonical.
    \item The Raney extension $(\Om(X),\ca{U}(X))$ is isomorphic to $(\Om(X),\fr(\Om(X)))$.
    
\end{enumerate}
\begin{proof}
The equivalence between (1) and (2) is a well-known characterization of $T_1$ spaces. If (2) holds, then we have $\ca{R}$-density by Lemma \ref{l:t1P(x)}, and by the characterization in Proposition \ref{C*canonical}. By Corollary \ref{c:all-e-comp}, any Raney extension is $\ca{R}$-compact, thus (3) implies (4). If (4) holds, then (5) follows from the uniqueness result in Theorem \ref{containsprincipal}. If (4) holds, then $\ca{U}(X)^*=\fr(\Om(X))$, and so all neighborhood filters are regular. By Lemma \ref{l:N(x)regular}, (1) follows.
\end{proof}

\end{theorem}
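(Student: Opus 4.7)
The plan is to establish a cycle of implications (1) $\Rightarrow$ (2) $\Rightarrow$ (3) $\Rightarrow$ (4) $\Rightarrow$ (5) $\Rightarrow$ (1), leaning on the lemmas just proved (in particular Lemma \ref{l:t1P(x)} and Lemma \ref{l:N(x)regular}) together with the general characterization machinery from Proposition \ref{C*canonical}, Corollary \ref{c:all-e-comp}, and Theorem \ref{containsprincipal}. The equivalence of (1) and (2) should come for free: by Proposition \ref{upset}, the saturated subsets of $X$ are the upsets for the specialization preorder, and $X$ is $T_1$ exactly when every subset is an upset, that is, when $\ca{U}(X)=\ca{P}(X)$.

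For (2) $\Rightarrow$ (3) I would invoke Lemma \ref{l:t1P(x)}, which identifies $\ca{P}(X)^*$ with $\fr(\Om(X))^{op}$. Combined with Proposition \ref{C*canonical}, this immediately gives $\ca{R}$-density, since in fact $C^{*}=\ca{I}(\fr(\Om(X)))$, the collection $\fr(\Om(X))$ being a sublocale and hence already closed under intersections. Next, (3) $\Rightarrow$ (4) follows because, by Corollary \ref{c:all-e-comp}, every Raney extension is $\ca{R}$-compact; thus adding $\ca{R}$-density yields $\ca{R}$-canonicity. For (4) $\Rightarrow$ (5) I would appeal to the uniqueness clause of Theorem \ref{containsprincipal}: if $(\Om(X),\ca{U}(X))$ is $\ca{R}$-canonical, then it is isomorphic to the concrete $\ca{R}$-canonical Raney extension $(\Om(X),\opp{\ca{I}(\fr(\Om(X)))})=(\Om(X),\opp{\fr(\Om(X))})$, provided the latter exists. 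Existence is checked via Proposition \ref{famouschar} (all principal filters lie in $\fr(\Om(X))$ once we note that subfitness follows from $T_1$-ness) together with the fact that $\fr(\Om(X))$ is a Boolean sublocale of $\Fi(\Om(X))$.

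Finally for (5) $\Rightarrow$ (1): if $\ca{U}(X)^*=\opp{\fr(\Om(X))}$, then in particular every filter of the form $\up^{\Om(X)}\up x=N(x)$ lies in $\fr(\Om(X))$. Since $X$ is $T_0$ (we are working under that blanket hypothesis), Lemma \ref{l:N(x)regular} lets us conclude that $X$ is $T_1$.

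The only step with any real content is (2) $\Rightarrow$ (3), and even there the work has already been packaged into Lemma \ref{l:t1P(x)}; the subtlety absorbed in that lemma is the use of Lemma \ref{maximalboolean} to match $\ca{P}(X)^*$ exactly with the regular filters rather than with a proper Boolean sub-object. So I expect no serious obstacle in the cycle itself: once the ambient machinery is assumed, the proof is essentially bookkeeping that routes the $T_1$ hypothesis through the $\ca{R}$-side of Proposition \ref{C*canonical} and the uniqueness of canonical Raney extensions.
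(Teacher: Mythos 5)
Your proof is correct and follows essentially the same route as the paper: the same chain (1)$\Leftrightarrow$(2)$\Rightarrow$(3)$\Rightarrow$(4)$\Rightarrow$(5) using Lemma \ref{l:t1P(x)}, Proposition \ref{C*canonical}, Corollary \ref{c:all-e-comp}, and Theorem \ref{containsprincipal}, with Lemma \ref{l:N(x)regular} closing the cycle. The only cosmetic difference is that you close via (5)$\Rightarrow$(1) where the paper argues (4)$\Rightarrow$(1), in both cases by reading off that the neighborhood filters land in $\fr(\Om(X))$.
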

Let us then study the $T_1$ axiom more pointfreely. A topological space is $T_1$ if and only if all subspaces are intersections of open subspaces. This means that a space $X$ is $T_1$ if and only if $\ca{U}(X)$ is the same as the powerset $\ca{P}(X)$. Motivated by this, we define a Raney extension $(L,C)$ to be $T_1$ if and only if $C$ is a Boolean algebra.

\begin{theorem}\label{charsubfit}
    For a frame $L$, the following are equivalent.
    
    \begin{enumerate}
        \item $L$ is subfit.
        \item All exact filters of $L$ are regular.
        \item $(L,\opp{\fe(L)})$ is a $T_1$ Raney extension.
        \item There exists a $T_1$ Raney extension $(L,C)$.
        \item There is a unique $T_1$ Raney extension on $L$, up to isomorphism. This is $(L,\opp{\fr(L)})$.
        \end{enumerate}
\end{theorem}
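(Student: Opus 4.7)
The whole argument pivots on the identity $\fe(L) = \fr(L)$. Throughout, I will use the standing inclusion $\fr(L) \subseteq \fe(L)$ that holds for every frame $L$ by Theorem \ref{posetofsubloc}. Under this observation, condition (2) is equivalent to saying that either inclusion is an equality.

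For (1) $\Leftrightarrow$ (2): by Proposition \ref{famouschar}, $L$ is subfit iff $\fr(L)$ contains all principal filters. Since $\fr(L)$ is a sublocale of $\Fi(L)$ (being the Booleanization $\bl(0)$), Lemma \ref{principalmin} converts the presence of all principal filters into $\fe(L) \subseteq \fr(L)$, which with the standing inclusion yields (2); the converse is immediate because $\fe(L)$ always contains the principal filters. The implication (2) $\Rightarrow$ (3) is direct: if $\fe(L) = \fr(L)$, then the coframe component $\opp{\fe(L)}$ is Boolean, so $(L, \opp{\fe(L)})$ is $T_1$ by definition; and (3) $\Rightarrow$ (4) is trivial.

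The crux is (4) $\Rightarrow$ (2) and the uniqueness in (5). Let $(L, C)$ be any $T_1$ Raney extension, so that $C$ is Boolean. By Theorem \ref{charC*} the set $C^* \subseteq \Fi(L)$, viewed as a sublocale of $\Fi(L)$ (hence anti-isomorphic to $C$ but still Boolean by self-duality of Boolean algebras), is Boolean. Lemma \ref{maximalboolean} then forces $C^* \subseteq \bl(0) = \fr(L)$. Meanwhile, Corollary \ref{c:all-e-comp} together with Proposition \ref{C*canonical} gives $\fe(L) \subseteq C^*$, since every Raney extension is $\ca{E}$-compact. Chaining the two inclusions with the standing one yields $C^* = \fe(L) = \fr(L)$, which is (2); Corollary \ref{c:=*iso} then produces the isomorphism $(L, C) \cong (L, \opp{\fr(L)})$, settling uniqueness. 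For the existence half of (5), now that (1) holds, the background list of examples of Raney extensions exhibits $(L, \opp{\fr(L)})$ as a valid Raney extension for subfit $L$, and it is $T_1$ because $\fr(L)$ is Boolean.

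The main obstacle is a bookkeeping one: $C^*$ is defined as a subcolocale of $\opp{\Fi(L)}$, but Lemma \ref{maximalboolean} applies only after re-reading it as a (Boolean, hence self-dual) sublocale of $\Fi(L)$. Making this passage explicit, and then combining it with $\ca{E}$-compactness to sandwich $C^*$ between $\fe(L)$ and $\fr(L)$, is the real substance of the theorem; everything else is assembling previously established characterizations of subfitness, Booleanization, and $\ca{E}$-compactness.
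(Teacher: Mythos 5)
Your proposal is correct and follows essentially the same route as the paper's proof: subfitness is translated into $\fe(L)\se\fr(L)$ via Proposition \ref{famouschar} and Lemma \ref{principalmin}, and any Boolean $C^*$ is pinned down to $\fr(L)=\fe(L)$ by sandwiching it between $\fe(L)$ (from $\ca{E}$-compactness) and the maximal Boolean sublocale $\bl(0)=\fr(L)$ (Lemma \ref{maximalboolean}), with Corollary \ref{c:=*iso} giving uniqueness. One cosmetic point: since Lemma \ref{maximalboolean} asserts maximality rather than that $\bl(0)$ is a top element among Boolean sublocales, the conclusion $C^*=\bl(0)$ only follows once you know $\bl(0)\se C^*$, so the sentence invoking that lemma should come after, not before, the inclusions $\fr(L)\se\fe(L)\se C^*$ that you supply in the next line.
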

\begin{proof}
    Suppose that $L$ is a subfit frame. By Proposition \ref{famouschar}, all principal filters are regular filters. By Lemma \ref{principalmin}, this implies that $\fe(L)\se \fr(L)$. Now, suppose that we have $\fe(L)\se \fr(L)$. This implies that $(L,\opp{\fe(L)})=(L,\opp{\fr(L)})$, as regular filters are exact for every frame. By Proposition \ref{reg}, the coframe $\opp{\fr(L)}$ is a Boolean algebra. It is clear that condition (3) implies condition (4). Let us show that (4) implies (5). If $(L,B)$ is a Raney extension such that $B$ is Boolean, as $B^*$ contains all principal filters, we must have $\fe(L)\se B^*$. We also have $\fr(L)\se \fe(L)$, as this holds for all frames, and so by Lemma \ref{maximalboolean} we must have $\fr(L)=\fe(L)=B^*$. Thus, by Corollary \ref{c:=*iso}, $(L,B)$ and $(L,\opp{\fr(L)})$ are isomorphic. Now, suppose that (5) holds. Then, $(L,\opp{\fr(L)})$ is a Raney extension. This means that all principal filters are regular, and so by Proposition \ref{famouschar} the frame $L$ must be subfit.
\end{proof}

For a Raney extension $(L,C)$, we call a \emph{$T_1$ reflection} a map $\tau:(L,C)\to T(L,C)$ such that $T(L,C)$ is $T_1$, and such that whenever $f:(L,C)\to (M,D)$ is a map to a $T_1$ Raney extension, we have a commuting diagram as follows.

\[
\begin{tikzcd}
(L,C)
\ar[r,"\tau"]
\ar[dr,"f",swap]
& T(L,C)
\ar[d,"f_{\delta}"]\\
& (M,D).
\end{tikzcd}
\]

\begin{proposition}
In $\bd{Raney}_{\ca{E}}$, all subfit frames admit a $T_1$ reflection, and this coincides with their $T_D$ reflection. 
\end{proposition}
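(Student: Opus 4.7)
The plan is to show that for subfit $L$, the $T_D$ reflection $\delta:(L,C)\to(L,\opp{\fe(L)})$ constructed in the previous proposition already serves as a $T_1$ reflection. The argument breaks into three steps.

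First, I would verify that when $L$ is subfit the target $(L,\opp{\fe(L)})$ is in fact $T_1$. By Theorem \ref{charsubfit}, subfitness of $L$ gives $\fe(L)=\fr(L)$, and by Proposition \ref{reg} the coframe $\opp{\fr(L)}$ is Boolean. Hence $(L,\opp{\fe(L)})=(L,\opp{\fr(L)})$ is $T_1$ by our definition.

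Second, and this is the core content, I would prove the auxiliary fact that \emph{every $T_1$ Raney extension is already $T_D$}. Let $(M,D)$ be $T_1$, so $D$ is Boolean. Since $D^*\se\opp{\Fi(M)}$ contains all principal filters (by definition of a Raney extension), Lemma \ref{principalmin} yields $\opp{\fe(M)}\se D^*$. Combining this with the inclusion $\fr(M)\se\fe(M)$ (valid for every frame) and the maximality of the Booleanization among Boolean subcolocales of $\opp{\Fi(M)}$ stated in Lemma \ref{maximalboolean}, I obtain $D^*=\opp{\fr(M)}=\opp{\fe(M)}$. Thus $(M,D)$ is a $T_D$ Raney extension in the sense of Theorem \ref{t:chartd}.

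Third, with this fact in hand, the conclusion follows almost immediately. Given a morphism $f:(L,C)\to(M,D)$ in $\bd{Raney}_{\ca{E}}$ with $(M,D)$ a $T_1$ Raney extension, the second step shows $(M,D)$ is also $T_D$, so the universal property of the $T_D$ reflection in $\bd{Raney}_{\ca{E}}$ produces the unique factorization $f=f_\delta\circ\delta$, with $f_\delta$ again in $\bd{Raney}_{\ca{E}}$. Therefore $\delta$ satisfies the universal property required of a $T_1$ reflection, and it coincides by construction with the $T_D$ reflection.

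The main obstacle is the second step: isolating the implication $T_1\Rightarrow T_D$ for Raney extensions. Although this largely repackages the Boolean-uniqueness argument from the proof of Theorem \ref{charsubfit}, it must be formulated for an arbitrary $T_1$ target $(M,D)$ rather than for a fixed subfit frame, and it relies essentially on Lemma \ref{maximalboolean} to pin down $D^*$ as the Booleanization of $\opp{\Fi(M)}$.
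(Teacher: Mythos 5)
Your proof is correct, and it lands on the same reflection map as the paper: under subfitness $\fe(L)=\fr(L)$, so your target $(L,\opp{\fe(L)})$ is literally the paper's $(L,\opp{\fr(L)})$. The difference is organizational. You isolate the auxiliary fact that every $T_1$ Raney extension is $T_D$ (your argument for it is precisely the $(4)\Rightarrow(5)$ step of Theorem \ref{charsubfit}, via Lemma \ref{principalmin}, the inclusion $\fr(M)\se\fe(M)$, and Lemma \ref{maximalboolean}, so you could equally well just cite that theorem) and then obtain the factorization formally from the universal property of the $T_D$ reflection. The paper instead re-verifies the lift directly: for $f:(L,C)\to(M,D)$ with $(M,D)$ a $T_1$ extension it deduces that $M$ is subfit with $D^*=\fr(M)=\fe(M)$, that exactness of $f|_L$ sends these filters to exact, hence regular, filters of $L$, and then invokes Theorem \ref{whenliftsfilters}. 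Your route makes the $T_1$ reflection a corollary of the $T_D$ one and makes the ``coincides with the $T_D$ reflection'' clause automatic, at the price of proving the implication $T_1\Rightarrow T_D$ for Raney extensions; the paper's route repeats the lifting argument but is self-contained. One cosmetic caveat: the paper's definition of reflection only demands existence of the factorization, so the word ``unique'' in your third step is not warranted by the stated universal property, though nothing in your argument depends on it.
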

\begin{proof}
Let $(L,C)$ be a subfit Raney extension. We claim that the reflection map is the map 
\[
\mb{cl}_{\ca{R}}:(L,C)\to (L,\opp{\fr(L)}).
\]
The pair $(L,\opp{\fr(L)})$ is a Raney extension, by Proposition \ref{famouschar}, and it is $T_1$, as $\fr(L)$ is always Boolean. Furthermore, by Corollary \ref{c:all-e-comp}, we have $\opp{\fr(L)}\se C^*$, and so by Theorem \ref{whenliftsfilters} the map above is a map of Raney extensions. The map extends the identity on $L$, and this is, indeed, such that preimages of exact filters are exact. Thus, this is a morphism in $\bd{Raney}_{\ca{E}}$. Now, suppose that there is a $T_1$ Raney extension $(M,D)$ such that we have a morphism $f:(L,C)\to (M,D)$ in $\bd{Raney}_{\ca{E}}$. By Theorem \ref{charsubfit}, $M$ must be subfit. By the same theorem, $D^*=\fr(M)=\fe(M)$. By assumption on $f$, then, the preimage map relative to $f|_{L}$ maps filters in $D^*$ to exact filters of $L$, and by subfitness of $L$ we have $\fe(L)=\fr(L)$. Hence, by Theorem \ref{whenliftsfilters}, we have a map $f_{\tau}:(L,\opp{\fr(L)})\to (M,D)$ extending $f_L$, as required.
\end{proof}

\subsection{Scatteredness}
The notion of scattered space is already present in classical topology, see for example \cite{willard70}. In \cite{simmons80} it is proven that a $T_0$ space is scattered if and only if $\mf{S}(\Om(X))$ is Boolean. This motivates the definition of scattered frame (see \cite{plewe00}): a frame is \emph{scattered} if the coframe $\mf{S}(L)$ is Boolean. As proven in \cite{ball16}, a frame is scattered and subfit if and only if all its sublocales are joins of closed sublocales. This also implies that a subfit frame is scattered if and only if $\mf{S}(L)=\Sc(L)$. Subfit scattered frames are also fit, and so $\mf{S}(L)=\mf{S}_{\op}(L)$, from which we obtain that for $L$ subfit and scattered the equality $\fr(L)=\fe(L)=\fse(L)$ holds (see Theorem \ref{charsubfit}). 

\begin{proposition}
For a subfit frame $L$, the following are equivalent.
\begin{enumerate}
    \item The frame $L$ is scattered.
    \item $\fse(L)=\fe(L)=\fr(L)$.
    \item $\fse(L)=\fe(L)$.
     \item The frame has a unique Raney extension, up to isomorphism.
    \item We have $\mf{S}_{\op}(L)=\mf{S}_{\cl}(L)$.
    \item The frame has a unique Raney extension, up to isomorphism, and this is $(L,\sll)$.
\end{enumerate}
\end{proposition}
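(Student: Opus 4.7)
The plan is to prove the equivalences by closing the cycle $(1)\Rightarrow(2)\Rightarrow(3)\Rightarrow(4)\Rightarrow(1)$ together with $(1)\Leftrightarrow(5)$, and deriving $(4)\Leftrightarrow(6)$ at the end. The main ingredients are the filter--sublocale isomorphisms of Theorem~\ref{eandse}, the boundary description of Raney extensions in Theorem~\ref{Rboundaries}, the subfitness characterization in Theorem~\ref{charsubfit}, Lemma~\ref{maximalboolean}, and the characterization of scattered subfit frames via $\mf{S}(L)=\Sc(L)$ from \cite{ball16}. I expect the hardest step to be the return $(4)\Rightarrow(1)$, which requires translating a Booleanity condition on a complete sublattice of $\mf{S}(L)$ back into scatteredness of the whole coframe.

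For $(1)\Leftrightarrow(5)$ the arguments are as sketched in the paragraph preceding the proposition. If $L$ is subfit and scattered it is also fit, so $\mf{S}(L)=\mf{S}_{\op}(L)$, and $\mf{S}(L)=\Sc(L)=\mf{S}_{\cl}(L)$ by \cite{ball16}; hence $\mf{S}_{\op}(L)=\mf{S}_{\cl}(L)$. Conversely, $\mf{S}_{\op}(L)=\mf{S}_{\cl}(L)$ makes every closed sublocale a meet of opens, i.e.\ $L$ is fit, and the resulting chain $\mf{S}(L)=\mf{S}_{\op}(L)=\mf{S}_{\cl}(L)=\Sc(L)$ together with subfitness yields scatteredness via \cite{ball16}.

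For the cycle, $(2)\Rightarrow(3)$ is immediate and $(3)\Leftrightarrow(4)$ is Theorem~\ref{Rboundaries}. For $(1)\Rightarrow(2)$: combining (1) with the just-established (5) makes $\mf{S}_{\op}(L)=\mf{S}(L)$ Boolean, whence via $\mi{fitt}:\opp{\fse(L)}\cong \mf{S}_{\op}(L)$, the sublocale $\fse(L)\se \Fi(L)$ is Boolean. The always-valid chain $\fr(L)\se \fe(L)\se \fse(L)$ together with Lemma~\ref{maximalboolean} (since $\fr(L)$ is the Booleanization of $\Fi(L)$) then forces all three to coincide.

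The key implication is $(4)\Rightarrow(1)$. By Theorem~\ref{Rboundaries}, (4) entails $\fe(L)=\fse(L)$; subfitness and Theorem~\ref{charsubfit} sharpen this to $\fr(L)=\fe(L)=\fse(L)$, which is Boolean. Via $\mb{cl}:\fe(L)\cong \Sc(L)$, the frame $\Sc(L)$ is itself Boolean. Now $\Sc(L)$ is closed under arbitrary joins in $\mf{S}(L)$ by definition and under arbitrary intersections by coframe distributivity, so it is a complete sublattice of $\mf{S}(L)$; hence the Boolean complement of $\cl(a)$ within $\Sc(L)$ is simultaneously a complement of $\cl(a)$ within $\mf{S}(L)$, and by uniqueness of complements in the distributive coframe $\mf{S}(L)$ it must equal $\op(a)$. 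Thus every open sublocale lies in $\Sc(L)$. Combining with the standard fact that every sublocale can be written as an intersection of sublocales of the form $\op(x)\ve \cl(y)$, we conclude $\mf{S}(L)=\Sc(L)$ is Boolean, so $L$ is scattered. Finally, $(4)\Leftrightarrow(6)$: $(6)\Rightarrow(4)$ is immediate, and for $(4)\Rightarrow(6)$ the cycle together with (5) gives $\sll=\mf{S}_{\op}(L)$, identifying the unique Raney extension (up to isomorphism) with $(L,\sll)$ via the $\op$-embedding of Theorem~\ref{eandse}.
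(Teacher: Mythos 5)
Most of your equivalences go through, and a couple are arguably cleaner than the paper's: for instance you derive $(1)\Rightarrow(2)$ from $(5)$ via the isomorphism $\mi{fitt}:\opp{\fse(L)}\cong\So(L)$ and Lemma~\ref{maximalboolean}, where the paper computes directly that every strongly exact filter is regular; and your $(3)\Leftrightarrow(4)$ via Theorem~\ref{Rboundaries} is a clean packaging of what the paper does by hand. The problem is the step you yourself flag as the key one, $(4)\Rightarrow(1)$. You assert that $\Sc(L)$ is ``closed under arbitrary intersections by coframe distributivity'' and hence a complete sublattice of $\mf{S}(L)$. Coframe distributivity is the law $S\ve\bca_k T_k=\bca_k(S\ve T_k)$; it says nothing about intersections distributing over joins, and in a coframe binary meets do not in general distribute over arbitrary joins. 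In fact $\Sc(L)$ is in general \emph{not} intersection-closed in $\mf{S}(L)$: it is a frame under the inclusion order, but only its joins are computed as in $\mf{S}(L)$, which is exactly why Theorem~\ref{eandse} presents it as isomorphic to $\fe(L)$ rather than as a subframe of $\mf{S}(L)$. Worse, the statement ``every sublocale, being an intersection of sublocales $\op(x)\ve\cl(y)$, lies in $\Sc(L)$'' is essentially the target conclusion $\mf{S}(L)=\Sc(L)$, so assuming the closure property at this point is close to circular.

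Parts of your argument survive for other reasons: since $\cl(a)$ is complemented, hence linear (Lemma~\ref{l:linear}), we have $\cl(a)\cap T\in\Sc(L)$ for every $T\in\Sc(L)$, so the meet of $\cl(a)$ with $T$ in the frame $\Sc(L)$ really is the intersection, and your deduction that the Boolean complement of $\cl(a)$ in $\Sc(L)$ equals $\op(a)$ --- i.e.\ that all open sublocales lie in $\Sc(L)$ --- can be saved. But the final passage from there to $\mf{S}(L)=\Sc(L)$ cannot. The paper sidesteps the issue by running the same complementation argument inside $\So(L)$ instead: fitted sublocales genuinely form a subcoframe of $\mf{S}(L)$, closed under arbitrary intersections, so Booleanness of $(L,\So(L))$ --- which follows from uniqueness together with Theorem~\ref{charsubfit} --- gives that every fitted sublocale has a fitted complement in $\mf{S}(L)$, whence $\Sc(L)\se\So(L)$; the reverse inclusion then comes from the subfitness fact that joins of complemented sublocales coincide with joins of closed ones, yielding $(5)$ and then scatteredness. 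You should either reroute your $(4)\Rightarrow(1)$ through $\So(L)$ in this way, or supply an independent proof that $\Sc(L)$ is intersection-closed under the hypotheses in force at that point.
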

\begin{proof}
Suppose that $L$ is a scattered subfit frame. Let $F$ be a strongly exact filter, by Theorem \ref{eandse} we must have that this is $\{x\in L:S\se \op(x)\}$ for some sublocale $S$. By hypothesis, $S$ is a join $\bve_i \cl(x_i)$ of closed sublocales, so that 
\[
F=\{x\in L:x_i\ve x=1\text{ for all }i\in I\}=\bca_i \{x\in L:x\ve x_i=1\}.
\]
By the characterization of regular filters in Proposition \ref{reg}, then, $\fse(L)\se \fr(L)$. This implies (2), as for all frames we have $\fr(L)\se \fe(L)\se \fse(L)$. It is clear that (2) implies (3). Let us show that (3) implies (4). The inclusion $\fe(L)\se \fse(L)$ holds for every frame. Now, suppose that in $L$ every strongly exact filter is exact. For any Raney extension $(L,C)$, we must have $\fe(L)\se C^*\se \fse(L)$. Our assumption, then, implies $\fe(L)=C^*=\fse(L)$. Item (4), then, follows by Corollary \ref{c:=*iso}. Suppose, now, that $L$ has a unique Raney extension, up to isomorphism. The pair $(L,\So(L))$ is a Raney extension. As $L$ is subfit, this must be a Boolean extension, by Theorem \ref{charsubfit}. As $\mf{S}_{\op}(L)$ is a subcoframe of $\mf{S}(L)$, this means that in $\mf{S}(L)$ every fitted sublocale has a complement, which is itself a fitted sublocale. In particular, all joins of closed sublocales are fitted and so $\mf{S}_{\cl}(L)\se\mf{S}_{\op}(L)$. Finally, recall that the lattice $\mf{S}_b(L)$ of joins of complemented sublocales is $\mf{S}_{\cl}(L)$ for subfit frames. We then also have the reverse set inclusion $\mf{S}_{\op}(L)\se \mf{S}_{\cl}(L)$. If (5) holds, by subfitness we have that $(L,\mf{S}_{\op}(L))$ is a Boolean extension. Since this is the largest Raney extension, all its Raney extensions must be Boolean. By Theorem \ref{charsubfit}, when Boolean extensions exist, they are unique. Note also that $\So(L)=\Sc(L)$ implies that every closed sublocale is fitted, and this implies that the frame $L$ is subfit, hence $\So(L)=\mf{S}(L)$. Suppose, finally, that (6) holds. Because all subfit frames have a Boolean extension, by Theorem \ref{charsubfit}, $\sll$ must be Boolean, and so $L$ is scattered.
\end{proof}

\section{Exactness and \texorpdfstring{T\textsubscript{D}}{TD} duality}

We have seen that the spectrum of $(L,\opp{\fe(L)})$ is the $T_D$ spectrum of the frame $L$ (Lemma \ref{spectrumofef}), and that a $T_0$ space is $T_D$ if and only if its Raney extension is $\ca{E}$-canonical (Theorem \ref{t:chartd}). Let us explore more connections between exactness and the $T_D$ axiom.

\begin{lemma}\label{l:tdimpliesexact}
Any $T_D$ frame map $f:L\to M$ such that $M$ is $T_D$-spatial is exact.
\end{lemma}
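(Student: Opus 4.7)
The plan is to prove exactness directly, using $D$-spatiality of $M$ to reduce both required verifications to checks on covered primes of $M$, and exploiting Proposition \ref{p:cpexact} to translate covered primes into exact completely prime filters through the right adjoint $f_*$. Fix an exact meet $\bwe_i x_i$ in $L$. Two conditions must be verified: the equation $f(\bwe_i x_i) = \bwe_i f(x_i)$, and the exactness of $\bwe_i f(x_i)$ in $M$.

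For the equation, the inequality $f(\bwe_i x_i) \leq \bwe_i f(x_i)$ is automatic from monotonicity. For the reverse, since $M$ is $D$-spatial and every element is the meet of the covered primes above it, it suffices to fix a covered prime $p \in M$ with $f(\bwe_i x_i) \leq p$ and show $\bwe_i f(x_i) \leq p$. The adjunction $f \dashv f_*$ yields $\bwe_i x_i \leq f_*(p)$, and since $f$ is a $D$-morphism, $f_*(p)$ is a covered prime of $L$. Proposition \ref{p:cpexact} then tells us that the completely prime filter $L\sm\da f_*(p)$ is exact. Because $\bwe_i x_i \leq f_*(p)$, this exact meet lies outside the filter; by closure of the filter under exact meets, some $x_j$ must lie outside too, that is $x_j \leq f_*(p)$, hence $f(x_j) \leq p$ and so $\bwe_i f(x_i) \leq p$.

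For exactness of $\bwe_i f(x_i)$, I would verify $\bwe_i (f(x_i) \ve a) \leq \bwe_i f(x_i) \ve a$ for every $a \in M$, the other inequality being immediate. Reducing via $D$-spatiality once more, fix a covered prime $p$ with $\bwe_i f(x_i) \ve a \leq p$. This splits into $a \leq p$ and $\bwe_i f(x_i) \leq p$, and by the equation just established the latter is equivalent to $\bwe_i x_i \leq f_*(p)$. The same exact-filter argument as above produces an index $j$ with $x_j \leq f_*(p)$, giving $f(x_j) \ve a \leq p$ and therefore $\bwe_i (f(x_i) \ve a) \leq p$.

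The main obstacle is conceptual rather than technical: one must recognise that the exactness hypothesis on $\bwe_i x_i$ is precisely the leverage needed to pass through $f_*(p)$, because Proposition \ref{p:cpexact} converts a bound $\bwe_i x_i \leq f_*(p)$ into a bound $x_j \leq f_*(p)$ for some $j$ via the exactness of the filter $L\sm\da f_*(p)$. Once this key step is isolated, $D$-spatiality transports it uniformly to prove both the equation and the exactness of the image meet.
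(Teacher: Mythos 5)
Your proof is correct and follows essentially the same route as the paper's: both reduce the two verifications to covered primes of $M$ via $T_D$-spatiality, transfer along $f_*$ using the $D$-morphism hypothesis, and then exploit exactness of $\bwe_i x_i$ together with coveredness of $f_*(p)$ to extract an index $j$ with $x_j\leq f_*(p)$. The only difference is cosmetic: you invoke Proposition \ref{p:cpexact} (exactness of the completely prime filter $L{\sm}{\da}f_*(p)$) where the paper redoes that short coveredness-plus-exactness computation inline.
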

\begin{proof}
   Suppose that $L$ and $M$ are frames and $M$ is $T_D$-spatial, and that there is a frame map $f:L\to M$ such that $f_*(p)$ is a covered prime whenever $p\in M$ is covered. Now, suppose that $\bwe_i x_i\in L$ is an exact meet. We show $\bwe_i f(x_i)\leq f(\bwe_i x_i)$. Suppose that $p\in M$ is a covered prime with $f(\bwe_i x_i)\leq p$. Then $\bwe_i x_i\leq f_*(p)$, that is, $\bwe_i x_i\ve f_*(p)=f_*(p)$. By exactness, $\bwe_i (x_i\ve f_*(p))=f_*(p)$, and by coveredness there is $i\in I$ with $x_i\ve f_*(p)=f_*(p)$. Then, $f(x_i)\leq p$, which implies $\bwe_i f(x_i)\leq p$, and by $T_D$-spatiality this implies $\bwe_i f(x_i)\leq f(\bwe_i x_i)$ as desired. Let us now show that $\bwe_i f(x_i)$ is exact. Let $y\in M$. Suppose that $\bwe_i f(x_i)\ve y\leq p$ for $p\in M$ a covered prime. As shown above, this means $f(\bwe_i x_i)\ve y\leq p$. Similarly as above, we obtain $f(x_i)\ve y\leq p$ for some $i\in I$, and so $\bwe_i (f(x_i)\ve y)\leq p$. By $T_D$-spatiality, $\bwe_i (f(x_i)\ve y)\leq \bwe_i f(x_i)\ve y$, as desired.
   \end{proof}

\begin{theorem}
    The adjunction $\Om\dashv \pt$ between frames and spaces restricts to an adjunction $\Om:\bd{Top}_D\lra \bd{Frm}_{\ca{E}}^{op}:\pt$, and this restricts to the known $T_D$ duality.
\end{theorem}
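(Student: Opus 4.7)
My plan is to verify the restriction at the morphism level on both sides, check that the unit and counit of the classical adjunction descend, and then derive the known $T_D$ duality as a further restriction to fixpoints. Since $\pt(L)$ need not be $T_D$, the right adjoint in the restricted adjunction is to be read as $\pt_D$ on $\bd{Frm}_{\ca{E}}$: the content of the theorem is that imposing $T_D$ on spaces and exactness on frame maps forces the classical $\pt$ to factor through $\pt_D$.

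For the $\Om$ side, given a continuous $f:X\to Y$ between $T_D$ spaces, a short computation with right adjoints gives $\Om(f)_*(X{\sm}\overline{\{x\}})=Y{\sm}\overline{\{f(x)\}}$. Proposition~\ref{allpxarecov} makes the image a covered prime of $\Om(Y)$ and tells us every covered prime of $\Om(X)$ is of the source form, so $\Om(f)$ is a D-morphism into the $T_D$-spatial frame $\Om(X)$, and Lemma~\ref{l:tdimpliesexact} yields exactness.

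For the $\pt_D$ side, given exact $g:L\to M$ and $P\in \pt_D(M)$, Proposition~\ref{p:cpexact} identifies $P$ with an exact filter. The preimage $g^{-1}(P)$ is completely prime; it is also exact, because when $\bwe_i x_i$ is exact in $L$ with each $g(x_i)\in P$, exactness of $g$ forces $g(\bwe_i x_i)=\bwe_i g(x_i)$ to be an exact meet in $M$, and exactness of $P$ places it in $P$. Applying Proposition~\ref{p:cpexact} once more puts $g^{-1}(P)\in \pt_D(L)$.

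For the unit, counit, and the final comparison: when $X$ is $T_D$, the classical unit $x\mapsto N(x)$ lands in $\pt_D(\Om(X))$ since $N(x)$ corresponds to the covered prime $X{\sm}\overline{\{x\}}$; for the counit $\epsilon_L:L\to\Om(\pt_D(L))$, a direct computation shows $(\epsilon_L)_*$ sends $\pt_D(L){\sm}\overline{\{P\}}$ to the prime corresponding to $P$, which is covered, so $\epsilon_L$ is a D-morphism into a $T_D$-spatial frame and exact by Lemma~\ref{l:tdimpliesexact}. Triangle identities and naturality descend from the classical adjunction. Restricting further to the $T_D$-spatial frames on the algebraic side---on which D-morphisms with $T_D$-spatial codomain coincide with exact maps---recovers the known $T_D$ duality of \cite{banaschewskitd}. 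The main obstacle is the interpretive one: making sense of ``$\pt$'' on $\bd{Frm}_{\ca{E}}$; once one agrees this means $\pt_D$, the work reduces to the two observations that preimage of an exact filter under an exact map is exact, and that D-morphisms into $T_D$-spatial frames are exact.
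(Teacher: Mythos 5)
Your proof is correct and follows essentially the same route as the paper: both arguments reduce exactness of $\Om(f)$ for $f$ between $T_D$ spaces, and of the counit $L\to\Om(\pt_D(L))$, to Lemma \ref{l:tdimpliesexact} (D-morphisms into $T_D$-spatial frames are exact), and both read the right adjoint as $\pt_D$. You additionally verify that exact maps are D-morphisms (preimages of exact completely prime filters under exact maps are exact and completely prime, via Proposition \ref{p:cpexact}), a point the paper's two-line proof leaves implicit; the only small imprecision is that Proposition \ref{allpxarecov} as stated gives only one direction of ``covered primes of $\Om(X)$ are exactly the $X{\sm}\overline{\{x\}}$'' --- the full fact is part of the fixpoint analysis in \cite{banaschewskitd}, and is in any case avoidable since $\Om(f)$ is already a D-morphism for arbitrary continuous $f$ by the known adjunction $\Om\dashv\pt_D$.
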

\begin{proof}
It suffices to show that the functor $\Om$ maps continuous maps between $T_D$ spaces to exact frame maps, and that the $T_D$ spatialization map of a frame is exact. By Lemma \ref{l:tdimpliesexact}, it is known that the spatialization map is a $T_D$ morphism, so by Lemma \ref{l:tdimpliesexact} it is also exact. By the same Lemma, a map $f:X\to Y$ between $T_D$ spaces determines an exact frame map $\Om(f):\Om(Y)\to \Om(X)$.
\end{proof}

In light of the isomorphism in Theorem \ref{eandse}, Proposition \ref{whenelifts} implies the following.
\begin{corollary}\label{whensclifts}
A frame map $f:L\to M$ is exact if and only if it can be extended to a frame morphism $\Sc(f):\Sc(L)\to \Sc(M)$.
\end{corollary}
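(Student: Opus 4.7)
The plan is to transport the characterization in Proposition \ref{whenelifts} through the frame isomorphism $\mb{cl}:\fe(L)\cong \Sc(L)$ of Theorem \ref{eandse}. The key observation I would first make is that this isomorphism identifies the principal-filter embedding $a\mapsto \up a$ with the closed-sublocale embedding $a\mapsto \cl(a)$: unravelling the definition, $\mb{cl}(\up a)=\bve\{\cl(x):a\leq x\}=\cl(a)$, because $\cl$ is order-reversing and $\cl(a)$ dominates each other joinand. Reversing the order then gives a coframe isomorphism $\opp{\fe(L)}\cong \opp{\Sc(L)}$ which respects the embedded frame component $L$, so the Raney extensions $(L,\opp{\fe(L)})$ and $(L,\opp{\Sc(L)})$ are isomorphic via the identity on $L$, and likewise for $M$.

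For the forward direction I would then argue as follows. Suppose $f:L\to M$ is exact. By Proposition \ref{whenelifts}, $f$ extends to a Raney morphism $f_{\ca{E}}:(L,\opp{\fe(L)})\to (M,\opp{\fe(M)})$, and transporting along the isomorphism above yields a Raney morphism $(L,\opp{\Sc(L)})\to (M,\opp{\Sc(M)})$. By definition, this is a coframe map $\opp{\Sc(L)}\to \opp{\Sc(M)}$ sending $\cl(a)$ to $\cl(f(a))$ for every $a\in L$, and reversing the order on both sides yields the desired frame map $\Sc(f):\Sc(L)\to \Sc(M)$ extending $f$ in the sense that $\Sc(f)(\cl(a))=\cl(f(a))$. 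The converse runs in reverse: any such $\Sc(f)$ gives a coframe map $\opp{\Sc(L)}\to \opp{\Sc(M)}$, hence a Raney morphism between the two coframe extensions, which transports back along $\mb{cl}^{-1}$ to a Raney morphism $(L,\opp{\fe(L)})\to (M,\opp{\fe(M)})$, so $f$ is exact by Proposition \ref{whenelifts}.

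I do not foresee any real obstacle here beyond checking that the isomorphism of Theorem \ref{eandse} is compatible with the Raney-extension structure, i.e.\ that it preserves the embedded frame of principal filters on the nose; once the identification $\mb{cl}(\up a)=\cl(a)$ is in hand, everything else is a formal application of the frame/coframe order-duality and of Proposition \ref{whenelifts}.
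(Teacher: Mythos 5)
Your proposal is correct and follows exactly the route the paper takes: the paper derives this corollary in one line by combining the isomorphism $\mb{cl}:\fe(L)\cong\Sc(L)$ of Theorem \ref{eandse} with the characterization of exactness in Proposition \ref{whenelifts}, which is precisely your argument. You have merely made explicit the verification (correctly) that $\mb{cl}(\up a)=\cl(a)$, so the identification respects the embedded copies of $L$ and $M$ and the transport of Raney morphisms goes through in both directions.
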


We then obtain the following.

\begin{proposition}
    Every map $f:X\to Y$ between $T_D$ spaces lifts to a frame map
    \[
    \Sc(\Om(Y))\to \Sc(\Om(X))
    \]
\end{proposition}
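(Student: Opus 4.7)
The plan is to chain together the two immediately preceding results. First, I would invoke the theorem just proven, which says that the adjunction $\Om\dashv \pt$ restricts to $\Om:\bd{Top}_D\lra \bd{Frm}_{\ca{E}}^{op}:\pt$. Applied to a continuous map $f:X\to Y$ between $T_D$ spaces, this produces a frame map $\Om(f):\Om(Y)\to \Om(X)$ which is exact, i.e. a morphism of $\bd{Frm}_{\ca{E}}$.

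Second, I would apply Corollary \ref{whensclifts}, which states that a frame map is exact precisely when it can be extended to a frame morphism between the corresponding frames of joins of closed sublocales. Applied to $\Om(f)$, this yields the desired lifted frame map
\[
\Sc(\Om(f)):\Sc(\Om(Y))\to \Sc(\Om(X)).
\]

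There is essentially no obstacle: the content of the statement has already been packaged into the two results being cited. The only thing worth noting (and which I would briefly mention) is that the lifted map $\Sc(\Om(f))$ is explicitly described, via the isomorphism $\mb{cl}:\fe(L)\cong \Sc(L)$ of Theorem \ref{eandse} and Proposition \ref{whenelifts}, as the map sending a join $\bve_i \cl(a_i)\in \Sc(\Om(Y))$ to $\bve_i \cl(\Om(f)(a_i))=\bve_i \cl(f^{-1}(a_i))\in \Sc(\Om(X))$, so that the lift is canonical and functorial. Since no further verification beyond the two cited results is required, this is essentially a one-line corollary rather than a theorem requiring substantive argument.
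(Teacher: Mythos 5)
Your proposal is correct and follows essentially the same route as the paper: the paper deduces exactness of $\Om(f)$ directly from Lemma \ref{l:tdimpliesexact} (rather than via the restricted adjunction, which is itself proved from that lemma) and then applies Corollary \ref{whensclifts}, exactly as you do. The extra remark on the explicit form of the lifted map is a harmless, correct addition.
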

\begin{proof}
    By Lemma \ref{l:tdimpliesexact}, a continuous map $f:X\to Y$ between $T_D$ spaces becomes an exact frame map $\Om(f):\Om(Y)\ra \Om(X)$, and by Corollary \ref{whensclifts} this lifts to a frame map $\Sc(\Om(Y))\to \Sc(\Om(X))$.
\end{proof}

Then, $T_D$ duality remains intact if we replace $\bd{Frm}_D$ with the subcategory $\bd{Frm}_{\ca{E}}$. The advantage of working in this category is that all morphisms $f:L\to M$ lift to morphisms $\Sc(L)\to \Sc(M)$. In fact, we also have the following picture, where the functor $\mf{S}_{\cl}$ is the one mapping a frame $L$ to the Raney extension $(L,\opp{\scl})$.
\begin{center}
    \begin{tikzcd}[row sep=large,column sep=large]
        \bd{Frm}_{\ca{E}}
        \ar[r,"\pt_D"]
        \ar[d,"\Sc"]
        & \bd{Top}_D
        \\
       \bd{Raney}_D,
    \ar[ur,"\rpt"]
    \end{tikzcd}
\end{center}

Here, the category $\bd{Raney}_D$ is the full subcategory of $\bd{Raney}$ determined by the $T_D$ Raney extensions. Note that this is also a full subcategory of $\bd{Raney}_{\ca{E}}$.

\begin{lemma}\label{l:exstability}
For a frame $L$, if a meet $\bwe_i x_i\in L$ is exact, then so is $\bwe_i (x_i\ve y)$ for all $i\in I$.
\end{lemma}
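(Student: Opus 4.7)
The statement (reading the final ``for all $i \in I$'' as the evident typo for ``for all $y \in L$'') asks us to show a natural stability property of exact meets in a frame: exactness is preserved under joining every factor with a fixed element. My plan is to unfold the definition of exactness on both the hypothesis side and the conclusion side, and then apply the hypothesis twice with cleverly chosen elements.

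Concretely, the hypothesis is that $(\bwe_i x_i) \ve a = \bwe_i (x_i \ve a)$ for every $a \in L$, and the goal is to show, for every $a \in L$, the identity $(\bwe_i (x_i \ve y)) \ve a = \bwe_i ((x_i \ve y) \ve a)$. First I would apply exactness of $\bwe_i x_i$ with the element $y$ itself, which gives $\bwe_i (x_i \ve y) = (\bwe_i x_i) \ve y$. Joining both sides with $a$ and using associativity of $\ve$, I obtain $(\bwe_i (x_i \ve y)) \ve a = (\bwe_i x_i) \ve (y \ve a)$. Next I would apply exactness of $\bwe_i x_i$ a second time, now with the element $y \ve a$, which rewrites the right-hand side as $\bwe_i (x_i \ve y \ve a) = \bwe_i ((x_i \ve y) \ve a)$, completing the chain.

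There is really no obstacle here beyond being careful with the two applications of the hypothesis; no Heyting gymnastics or sublocale machinery is needed, just the definition of exactness together with the associativity of joins. The only subtlety worth flagging is that we must check the full exactness condition (quantifying over all $a \in L$), not merely that $\bwe_i (x_i \ve y)$ is the ``expected'' meet, since exactness is stronger than the bare distributive identity at a single element.
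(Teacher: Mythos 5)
Your proof is correct and follows essentially the same route as the paper: the paper compresses the argument into the single chain $\bwe_i (x_i \ve y \ve z)\leq (\bwe_i x_i)\ve y\ve z\leq (\bwe_i (x_i\ve y))\ve z$, applying exactness once with $y\ve z$ and then the always-valid inequality $(\bwe_i x_i)\ve y \leq \bwe_i(x_i\ve y)$, whereas you apply exactness twice to get equalities throughout. Your reading of the quantifier as ``for all $y\in L$'' is also the intended one.
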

\begin{proof}
    Observe that, if $\bwe_i x_i$ is exact, for all $z\in L$, we have $\bwe_i (x_i \ve y \ve z)\leq (\bwe_i x_i)\ve y\ve z\leq (\bwe_i (x_i\ve y))\ve z$.
\end{proof}
\begin{proposition}
A surjective frame map $f:L\to M$ such that it preserves exact meets is exact.
\end{proposition}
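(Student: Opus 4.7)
The plan is to unfold the definition of exactness and show that the two clauses (preservation of the value and exactness of the image meet) follow from the surjectivity of $f$ plus the assumed preservation of exact meets, using Lemma \ref{l:exstability} as the key tool.

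Fix an exact meet $\bwe_i x_i$ in $L$. The equality $f(\bwe_i x_i)=\bwe_i f(x_i)$ is given by hypothesis, so the only thing left to establish is that $\bwe_i f(x_i)$ is itself exact in $M$. That is, for every $y\in M$ we must show $(\bwe_i f(x_i))\ve y = \bwe_i (f(x_i)\ve y)$; as always the $\geq$ direction is automatic, so the content is in the reverse inequality.

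Here is where surjectivity enters: write $y=f(z)$ for some $z\in L$. By Lemma \ref{l:exstability}, the meet $\bwe_i (x_i\ve z)$ is exact in $L$, so the preservation hypothesis gives
\[
f\!\left(\bwe_i (x_i\ve z)\right) \;=\; \bwe_i f(x_i\ve z) \;=\; \bwe_i \bigl(f(x_i)\ve f(z)\bigr)\;=\;\bwe_i \bigl(f(x_i)\ve y\bigr),
\]
using that $f$ is a frame map (hence preserves finite joins). On the other hand, exactness of $\bwe_i x_i$ in $L$ gives $\bwe_i (x_i\ve z)=(\bwe_i x_i)\ve z$, so applying $f$ and using preservation of the original exact meet together with binary joins yields
\[
f\!\left(\bwe_i (x_i\ve z)\right)\;=\; f(\bwe_i x_i)\ve f(z)\;=\;(\bwe_i f(x_i))\ve y.
\]
Chaining these two identities delivers $(\bwe_i f(x_i))\ve y = \bwe_i (f(x_i)\ve y)$, which is exactness of $\bwe_i f(x_i)$ in $M$.

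I do not expect any serious obstacle in this argument; the only non-trivial maneuver is recognising that surjectivity allows us to reduce checking exactness of $\bwe_i f(x_i)$ against an arbitrary $y\in M$ to checking it against the image $f(z)$ of some $z\in L$, so that Lemma \ref{l:exstability} can be invoked to produce an exact meet in $L$ that $f$ is guaranteed to preserve. Everything else is routine manipulation of frame maps.
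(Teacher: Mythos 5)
Your proof is correct and follows essentially the same route as the paper's: reduce the test element $y\in M$ to $f(z)$ by surjectivity, apply Lemma \ref{l:exstability} to get exactness of $\bwe_i(x_i\ve z)$, and chain the preservation hypothesis with exactness of $\bwe_i x_i$. The paper's version is just a terser writing of the same computation.
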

\begin{proof}
Suppose that $\bwe_i x_i$ is exact and that $f:L\to M$ is a frame surjection which preserves exact meets. For $u\in L$, we have $\bwe_i (f(x_i)\ve f(u))=\bwe_i f(x_i\ve u)=f(\bwe_i x_i \ve u)=\bwe_i f(x_i)\ve f(u)$. Since all elements of $M$ are $f(v)$ for some $v\in L$, the meet $\bwe_i f(x_i)$ is exact.
\end{proof}

We say that a sublocale is \emph{exact} if the corresponding surjection is exact. Let us call $\mf{S}_{\ca{E}}(L)$ the ordered collection of exact sublocales of a frame.

\begin{proposition}\label{p:charexactS}
    A sublocale $S$ is exact if and only if for every exact meet $\bwe_i x_i$ and for all $x\in L$ we have $\cl(x_i)\cap S\se \cl(x)$ for all $i\in I$ implies that $\cl(\bwe_i x_i)\cap S\se \cl(x)$.
\end{proposition}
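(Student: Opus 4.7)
The plan is to translate the set-theoretic containment condition on closed sublocales into a purely algebraic condition on the nucleus $\nu_S:L\to S$ associated with the sublocale, where $\nu_S(a)=\bwe\{s\in S:a\leq s\}$, and then invoke the previously established fact that a surjection preserving exact meets is automatically exact.

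The first step is the translation. Since $\cl(x)=\up x$ in $\sll$ and sublocale intersection is set-theoretic, we have $\cl(x_i)\cap S=\{s\in S:x_i\leq s\}$. The inclusion $\cl(x_i)\cap S\se \cl(x)$ therefore means $x\leq s$ for every $s\in S$ with $x_i\leq s$, which is precisely $x\leq \nu_S(x_i)$. Similarly, $\cl(\bwe_i x_i)\cap S\se \cl(x)$ amounts to $x\leq \nu_S(\bwe_i x_i)$. Under this dictionary, the stated implication in the proposition reads: for every exact meet $\bwe_i x_i$ and every $x\in L$, if $x\leq \nu_S(x_i)$ for all $i$, then $x\leq \nu_S(\bwe_i x_i)$.

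The second step is to recognize this as preservation of exact meets. Since the inequality $\nu_S(\bwe_i x_i)\leq \bwe_i \nu_S(x_i)$ always holds by monotonicity, the above implication is equivalent (by taking $x=\bwe_i \nu_S(x_i)\in L$) to the reverse inequality $\bwe_i \nu_S(x_i)\leq \nu_S(\bwe_i x_i)$, hence to the equality $\nu_S(\bwe_i x_i)=\bwe_i \nu_S(x_i)$. Thus the condition in the proposition asserts exactly that $\nu_S$ preserves every exact meet of $L$.

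The third and final step is to conclude using the preceding proposition, which states that a surjective frame map preserving exact meets is exact (that is, automatically sends exact meets to exact meets). Since $\nu_S$ is the canonical surjection onto $S$, the condition given is equivalent to $\nu_S$ being exact, which by definition means that $S$ is an exact sublocale. The only mildly delicate point is verifying that in the translation we may legitimately take $x=\bwe_i \nu_S(x_i)$ as an element of $L$; this is immediate because sublocales are closed under meets in $L$, so $\bwe_i \nu_S(x_i)\in S\se L$. No other step requires nontrivial work beyond the translation already carried out.
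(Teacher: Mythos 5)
Your proof is correct and follows essentially the same route as the paper's: both translate the containment $\cl(x_i)\cap S\se \cl(x)$ into the inequality $x\leq \sigma_S(x_i)$ for the associated surjection, identify the resulting condition with preservation of exact meets (using that meets in $S$ are meets in $L$), and then invoke the preceding proposition that a surjection preserving exact meets is automatically exact. The only cosmetic difference is that the paper phrases the intermediate step via closures of sublocales, $\mi{cl}(S\cap\cl(\bwe_i x_i))\se \mi{cl}(\bve_i(S\cap\cl(x_i)))$, whereas you work directly with the elementwise inequalities.
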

\begin{proof}
The surjection corresponding to a sublocale $S\se L$ is the map $\sigma_S:x\mapsto \bwe\{s\in S:x\leq s\}$. Meets in $\sigma_S[L]=S$ are computed as $\bwe_i^S \sigma_S(x_i)=\bwe \{s\in S:x_i\leq s\mb{ for some $i\in I$}\}$. Exactness of $S$ amounts to having, for every exact meet $\bwe_i x_i$, that $\bwe \{s\in S:x_i\leq s\mb{ for some }i\in I\}\leq \bwe\{s\in S:\bwe_i x_i\leq s\}$. Observe that we can re-write this as $\bwe (\bcu_i S\cap \up x_i)\leq \bwe (S\cap \up\bwe_i x_i)$. By definition of the closure of a sublocale, and by definition of closed sublocale, this means that the condition is also equivalent to $\mi{cl}(S\cap \cl(\bwe_i x_i))\se \mi{cl}(\bve_i (S\cap \cl(x_i)))$, and this is equivalent to the given condition.
\end{proof}
\begin{remark}
    We note that the result above can be generalized: a sublocale $S$ is such that $\sigma_S$ preserves a certain class of meets if and only if for all meets $\bwe_i x_i$ in that class we have, for all $x\in L$, that $\cl(x_i)\cap S\se \cl(x)$ for all $i\in I$ implies that $\cl(\bwe_i x_i)\cap S\se \cl(x)$.
\end{remark}

\begin{proposition}\label{p:ex-suff}
The collection $\Se(L)$ is closed under all joins, and that it contains

\begin{itemize}
    \item All closed sublocales;
    \item All open sublocales;
    \item The two-element sublocales $\bl(p)$ for covered $p$.
\end{itemize}    
\end{proposition}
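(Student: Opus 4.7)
The plan is to verify each of the four claims using the characterization from Proposition \ref{p:charexactS}: a sublocale $S$ lies in $\Se(L)$ iff, for every exact meet $\bwe_i x_i$ and every $x\in L$, the family of containments $\cl(x_i)\cap S\se \cl(x)$ forces $\cl(\bwe_i x_i)\cap S\se \cl(x)$. For closure under joins, let $\{S_j\}_{j\in J}\se \Se(L)$, suppose $\bwe_i x_i$ is exact, and assume $\cl(x_i)\cap (\bve_j S_j)\se \cl(x)$ for every $i$. By Lemma \ref{l:linear} every closed sublocale is linear in $\sll$, so the intersection distributes over the join: $\cl(x_i)\cap (\bve_j S_j)=\bve_j(\cl(x_i)\cap S_j)$. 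Hence $\cl(x_i)\cap S_j\se \cl(x)$ for every $i$ and $j$. Applying exactness of each $S_j$ gives $\cl(\bwe_i x_i)\cap S_j\se \cl(x)$, and rejoining over $j$ (again by linearity of $\cl(\bwe_i x_i)$) produces $\cl(\bwe_i x_i)\cap (\bve_j S_j)\se \cl(x)$.

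For the three kinds of generators, I translate the containment condition to a plain inequality in $L$. For $S=\cl(a)$, the computation $\cl(y)\cap \cl(a)=\up y\cap \up a=\cl(y\ve a)$ reduces the hypothesis to $x\leq x_i\ve a$ for every $i$, hence $x\leq \bwe_i(x_i\ve a)=(\bwe_i x_i)\ve a$ by exactness of $\bwe_i x_i$; this gives the conclusion. For $S=\op(a)$, writing elements of $\op(a)$ as $a\ra b$ and using the adjunction $(-\we a)\dashv (a\ra -)$, one checks that $\cl(y)\cap \op(a)\se \cl(x)$ is equivalent to $x\we a\leq y\we a$; both hypothesis and conclusion then become $x\we a\leq (\bwe_i x_i)\we a$, via the general lattice identity $\bwe_i(x_i\we a)=(\bwe_i x_i)\we a$, so no exactness of the meet is needed here. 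For $S=\bl(p)$ with $p$ covered, $\bl(p)=\{1,p\}$, and $\cl(y)\cap \bl(p)$ equals $\bl(p)$ if $y\leq p$ and $\{1\}$ otherwise. The hypothesis thus says $x_i\leq p$ implies $x\leq p$ for every $i$, and the conclusion says $\bwe_i x_i\leq p$ implies $x\leq p$. The link is supplied by Proposition \ref{p:cpexact}: since $p$ is covered, the completely prime filter $L\sm \da p$ is closed under exact meets; contrapositively, if every $x_i\nleq p$ then $\bwe_i x_i\nleq p$, so $\bwe_i x_i\leq p$ forces some $x_i\leq p$, and the hypothesis then yields $x\leq p$.

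The main obstacle is the open sublocale case, where one has to recognize that the containment of an intersection of a closed and an open sublocale inside a closed sublocale collapses, via the Heyting adjunction, to a plain meet inequality in $L$; once this translation is available, exactness of the ambient meet plays no role. The covered-prime case is the other subtle point, resting on the bridge supplied by Proposition \ref{p:cpexact}, which is precisely what converts the meet-inequality $\bwe_i x_i\leq p$ for an exact meet into pointwise information about some $x_i$.
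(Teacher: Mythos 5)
Your proof is correct and follows essentially the same route as the paper: reduce everything to the criterion of Proposition \ref{p:charexactS}, use linearity of closed sublocales (Lemma \ref{l:linear}) for closure under joins, translate the containments into inequalities in $L$ for the closed and open cases, and use coveredness of $p$ for the two-element case (your appeal to Proposition \ref{p:cpexact} is just a packaged form of the paper's direct computation $\bwe_i(x_i\ve p)=(\bwe_i x_i)\ve p\neq p$). Your observation that the open-sublocale case needs no exactness of the meet, since both sides collapse to $x\we a\leq(\bwe_i x_i)\we a$, is a small but genuine sharpening of the paper's wording, which invokes exactness there unnecessarily.
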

\begin{proof}
  By Proposition \ref{p:charexactS}, if $S_j$ is a collection of exact sublocales, and $\bwe_i x_i$ an exact meet, then $\cl(x_i)\cap \bve_j S_j\se \cl(x)$ implies that $\cl(x_i)\cap S_j\se \cl(x)$ for all $j$'s, by Lemma \ref{l:linear}. Therefore, for all $j$'s, $\cl(\bwe_i x_i)\cap S_j\se \cl(x)$, and the result follows again by linearity. To see that it contains all closed sublocales, consider that if $\cl(x_i)\cap \cl(y)\se \cl(x)$ then $\cl(x_i\ve y)\se \cl(x)$, that is $x \leq x_i\ve y$, and so $x\leq \bwe_i x_i\ve y$, by exactness, and this is equivalent to $\cl(\bwe_i x_i)\cap \cl(y)\se \cl(x)$. Finally, for open sublocales, we notice that $\cl(x_i)\cap \op(y)\se \cl(x)$ means $\cl(x_i)\se \cl(y)\ve \cl(x)$, and this, by exactness, means $\cl(\bwe_i x_i)\se \cl(y)\ve \cl(x)$, that is $\cl(\bwe_i x_i)\cap \op(y)\se \cl(x)$, as desired. For the third part, consider a covered prime $p\in L$ and suppose that for an exact meet $\bwe_i x_i\in L$ we have $\cl(x_i)\cap \bl(p)\se \cl(x)$. This means that $\bl(p)\se \cl(x)\ve \op(x_i)$ for all $i$'s. Using the properties of prime elements in Lemma \ref{l:prime}, we obtain that either $x\leq p$ or $x_i\nleq p$ for all $i\in I$. In the first case, $\bl(p)\se \cl(x)$, and the desired result follows. In the second case, we have $\bwe_i (x_i\ve p)=\bwe_i x_i\ve p\neq p$, by exactness and coveredness, and so $\bwe_i x_i\nleq p$, from which the desired claim follows.
\end{proof}

\begin{lemma}\label{l:SCLsuff}
If a subcollection $\ca{S}\se \Ss(L)$ is closed under joins and is stable under the operation $-\cap \cl(x)$ and $-\cap \op(x)$ for all $x\in L$, then it is a subcolocale.
\end{lemma}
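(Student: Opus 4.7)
The plan is to verify that $\ca{S}$ is closed under the coframe difference, namely that $S\setminus T \in \ca{S}$ for every $S\in \ca{S}$ and every $T\in \Ss(L)$; together with closure under joins (which is given), this is exactly the defining property of a subcolocale of the coframe $\Ss(L)$.

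First, I would use the representation of any sublocale $T$ of $L$ as a meet $T=\bigcap_i(\op(x_i)\vee \cl(y_i))$ (recalled just after Proposition \ref{manyfacts}). In any coframe, $(-)\setminus T$ is a left adjoint to $T\vee(-)$, and by dualizing one obtains that $S\setminus(-)$ turns meets into joins: $S\setminus\bigl(\bigcap_i T_i\bigr)=\bigvee_i(S\setminus T_i)$. Thus
$$S\setminus T=\bigvee_i S\setminus(\op(x_i)\vee \cl(y_i)),$$
and by closure under joins it suffices to show that each $S\setminus(\op(x)\vee \cl(y))$ lies in $\ca{S}$.

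Next, I would argue that $\op(x)\vee \cl(y)$ is complemented in $\Ss(L)$, with complement $\cl(x)\cap \op(y)$. Using Proposition \ref{manyfacts}(5) (so that $\op(x)\cap\cl(x)=\{1\}$ and $\op(x)\vee\cl(x)=L$, and similarly for $y$), coframe distributivity, and the linearity of complemented elements (Lemma \ref{l:linear}) to move $\cap$ past $\vee$, a short computation gives $(\op(x)\vee \cl(y))\vee(\cl(x)\cap \op(y))=L$ and $(\op(x)\vee \cl(y))\cap(\cl(x)\cap \op(y))=\{1\}$. For any complemented $U\in \Ss(L)$ with complement $U^c$, the identity $S\setminus U=S\cap U^c$ then holds: $U\vee(S\cap U^c)=(U\vee S)\cap(U\vee U^c)=U\vee S\geq S$ by coframe distributivity, and minimality follows from linearity of $U^c$. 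Applying this identity gives
$$S\setminus(\op(x)\vee \cl(y))=S\cap \cl(x)\cap \op(y).$$

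Finally, by the hypothesis that $\ca{S}$ is stable under $-\cap \cl(x)$ and $-\cap \op(x)$, this intersection lies in $\ca{S}$; invoking closure under joins once more yields $S\setminus T\in \ca{S}$, as required. I expect the main obstacle to be the complementation step: correctly identifying $\cl(x)\cap \op(y)$ as the complement of $\op(x)\vee \cl(y)$ in $\Ss(L)$, which depends on careful use of linearity (Lemma \ref{l:linear}) to distribute meets over joins. Once this is in place, the rest of the proof is essentially bookkeeping with the two given stability conditions.
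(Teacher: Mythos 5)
Your proof is correct and follows essentially the same route as the paper's: decompose $T$ as $\bca_i(\op(x_i)\ve \cl(y_i))$, turn the difference of a meet into a join of differences, and reduce to the identity $S{\sm}(\op(x)\ve \cl(y))=S\cap \cl(x)\cap \op(y)$, which the stability hypotheses handle. The only difference is that you justify that identity explicitly via complementation and linearity, whereas the paper simply asserts it.
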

\begin{proof}
Suppose that $\ca{S}\se \Ss(L)$ is closed under all joins and stable under the two operations above. For it to be a subcolocale, it suffices to show that if $S\in \ca{S}$ and $T\in \Ss(L)$ then $S{\sm}T\in \ca{S}$. Every sublocale of $L$ is of the form $\bca_i \op(x_i)\ve \cl(y_i)$, and $S{\sm}\bca_i \op(x_i)\ve \cl(y_i)=\bve_i (S{\sm}(\op(x_i)\ve \cl(y_i)))$. Then, for $\ca{S}$ to be a subcolocale it suffices for it to be stable under $-{\sm}(\op(x)\ve \cl(y))$. If $\ca{S}$ is as required, and $S\in \ca{S}$, and $x,y\in L$, we have $S\cap \cl(x)\cap \op(y)=S{\sm}(\op(x)\ve \cl(y))\in \ca{S}$.
\end{proof}

\begin{theorem}
    The inclusion $\Se(L)\se \Ss(L)$ is a subcolocale inclusion.
\end{theorem}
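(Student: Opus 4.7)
The plan is to apply Lemma \ref{l:SCLsuff}. By Proposition \ref{p:ex-suff}, $\Se(L)$ is already closed under arbitrary joins, so all that remains is to verify that $\Se(L)$ is stable under the operations $S \mapsto S \cap \cl(z)$ and $S \mapsto S \cap \op(z)$ for every $z \in L$. In both cases, the starting point is the characterization of exact sublocales provided by Proposition \ref{p:charexactS}: a sublocale $T$ is exact iff for every exact meet $\bwe_i x_i$ and every $y \in L$, the condition $\cl(x_i) \cap T \se \cl(y)$ for all $i \in I$ implies $\cl(\bwe_i x_i) \cap T \se \cl(y)$.

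For stability under $-\cap \cl(z)$, I would take $S \in \Se(L)$ and an exact meet $\bwe_i x_i$ with $\cl(x_i) \cap (S \cap \cl(z)) \se \cl(y)$ for all $i$. Using $\cl(x_i) \cap \cl(z) = \cl(x_i \ve z)$ from Proposition \ref{manyfacts}, this reads $\cl(x_i \ve z) \cap S \se \cl(y)$. Lemma \ref{l:exstability} tells us that $\bwe_i (x_i \ve z) = (\bwe_i x_i) \ve z$ is itself an exact meet, so exactness of $S$ yields $\cl((\bwe_i x_i) \ve z) \cap S \se \cl(y)$, which rewrites as $\cl(\bwe_i x_i) \cap (S \cap \cl(z)) \se \cl(y)$. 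Hence $S \cap \cl(z) \in \Se(L)$.

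For stability under $-\cap \op(z)$ the idea is similar but exploits the linearity of the complementary pair $\cl(z),\op(z)$ (Lemma \ref{l:linear}). Assuming $\cl(x_i) \cap (S \cap \op(z)) \se \cl(y)$ for all $i$, I would decompose $\cl(x_i) \cap S = (\cl(x_i) \cap S \cap \cl(z)) \ve (\cl(x_i) \cap S \cap \op(z))$; the first summand lies inside $\cl(z)$ and the second, by assumption, inside $\cl(y)$, so altogether $\cl(x_i) \cap S \se \cl(y) \ve \cl(z) = \cl(y \we z)$. Applying exactness of $S$ and then intersecting the conclusion with $\op(z)$, using $\cl(z) \cap \op(z) = \{1\}$, recovers $\cl(\bwe_i x_i) \cap (S \cap \op(z)) \se \cl(y)$, as required. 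The main obstacle I expect is exactly this open case: one needs the complementary-pair decomposition in $\Ss(L)$ to pass from the hypothesis involving $\op(z)$ to an inclusion into a closed sublocale of the form $\cl(y \we z)$, so that Proposition \ref{p:charexactS} can be invoked on $S$ itself; once that bookkeeping is in place, the remaining verifications reduce to routine manipulations with Proposition \ref{manyfacts}, and the theorem follows by Lemma \ref{l:SCLsuff}.
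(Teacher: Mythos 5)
Your proposal is correct and follows essentially the same route as the paper: reduce to Lemma \ref{l:SCLsuff}, get join-closure from Proposition \ref{p:ex-suff}, handle $-\cap\cl(z)$ via $\cl(x_i)\cap\cl(z)=\cl(x_i\ve z)$ together with Lemma \ref{l:exstability}, and handle $-\cap\op(z)$ by trading the intersection with $\op(z)$ for a join with $\cl(z)$ so that the hypothesis becomes an inclusion into $\cl(y\we z)$. The only difference is cosmetic: you make explicit the complemented-pair decomposition that the paper uses silently in the step from $\cl(x_i)\cap S\cap\op(z)\se\cl(y)$ to $\cl(x_i)\cap S\se\cl(y)\ve\cl(z)$.
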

\begin{proof}
   By Lemma \ref{l:SCLsuff}, it suffices to show that the collection is closed under all joins and stable under $-\cap \cl(x)$ and $-\cap \op(x)$ for all $x\in L$. The first claim follows from Proposition \ref{p:charexactS}. For the second, suppose that $y\in L$. Suppose that $S$ is exact. We show that $S\cap \op(y)$ is exact. If for exact $\bwe_i x_i$ we have $\cl(x_i)\cap S\cap \op(y)\se \cl(x)$, then $\cl(x_i)\cap S\se \cl(x)\ve \cl(y)=\cl(x\we y)$, and so by hypothesis $\cl(\bwe_i x_i)\cap S\se \cl(x\we y)$, that is $\cl(\bwe_i x_i)\cap S\cap \op(y)\se \cl(x)$. Let us show that $S\cap \cl(y)$ is exact. If for exact $\bwe_i x_i$ we have $\cl(x_i)\cap S\cap \cl(y)\se \cl(x)$ then $\cl(x_i\ve y)\cap S\se \cl(x)$, and since $\bwe_i (x_i\ve y)$ is exact by Lemma \ref{l:exstability}, and by exactness of $\bwe_i x_i$, this implies that $\cl(\bwe_i x_i\ve y)\cap S\se \cl(x)$, that is $\cl(\bwe_i x_i)\cap S\cap \cl(y)\se \cl(x)$.
\end{proof}

For every frame $L$, we have subcolocale inclusions $\Se(L)\se \mf{S}_{D}(L)\se \mf{S}(L)$. We do not know, yet, how to characterize frames for which $\Se(L)=\mf{S}_D(L)$, or those such that $\Se(L)=\mf{S}(L)$, and leave this as an open question.

\begin{corollary}
    A spatial sublocale is exact if and only if it is a D-sublocale.
\end{corollary}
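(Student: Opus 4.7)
The forward direction is immediate from the subcolocale inclusion $\Se(L)\se\mf{S}_D(L)$ noted above. For the converse, let $S$ be a spatial sublocale of $L$ which is also a D-sublocale; I will show that $\sigma_S\colon L\to S$ is exact. My plan is to invoke Lemma \ref{l:tdimpliesexact}, which yields exactness of $\sigma_S$ provided its codomain $S$ is $T_D$-spatial.

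The main step is therefore to establish the following: \emph{a spatial D-sublocale of $L$ is D-spatial}. By spatiality of $S$, every element $a\in S$ is a meet of primes of $S$ above $a$, so it suffices to show that these primes can be replaced by covered primes of $S$ above $a$. First I would observe that for any sublocale $T\se L$ and any prime $p\in T$, if $p$ is covered in $L$ then $p$ is covered in $T$, since the meet in $T$ of strict superelements of $p$ is bounded below by the analogous meet in $L$; together with the D-sublocale hypothesis this yields that, for $p\in S$ prime, $p$ is covered in $S$ iff $p$ is covered in $L$. Then, given $a<b$ in $S$, spatiality produces a prime $p\in S$ with $a\leq p$ and $b\nleq p$; if $p$ is covered in $S$ we are done, while if $p$ is not covered, the decomposition $p=\bwe\{z\in S:p<z\}$ combined with exactness of the meet $\bwe\{y\in L:p<y\}$ (Lemma \ref{bigmeet}) and the above covered/non-covered dictionary allows one to extract a covered prime of $S$ still separating $b$ from $a$.

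Once D-spatiality of $S$ is established, Lemma \ref{l:tdimpliesexact} gives that the D-morphism $\sigma_S$ is exact, whence $S\in\Se(L)$. The delicate part of this program is the extraction step at the end of the previous paragraph: the D-sublocale hypothesis controls covered primes of $S$ directly but does not, on its face, say anything about non-covered primes; one must leverage spatiality in tandem with the exactness of the canonical meet presenting a non-covered prime in order to produce the desired covered witness above $a$.
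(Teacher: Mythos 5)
Your treatment of the forward direction is fine, and you have correctly located where the work lies in the converse: the paper's own proof is the same one-line appeal to Lemma \ref{l:tdimpliesexact}, whose hypothesis is $T_D$-spatiality of the codomain, so some upgrade from spatiality of $S$ to D-spatiality is indeed what is needed to make that appeal go through. Your auxiliary observations are also correct: primes of $S$ are exactly the primes of $L$ lying in $S$, a prime of $S$ that is covered in $L$ is covered in $S$ (meets in $S$ are meets in $L$ over a smaller index set), and hence for a D-sublocale the two notions of coveredness agree on $S$.

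The gap is that your bridging claim --- \emph{a spatial D-sublocale is D-spatial} --- is false, and the ``extraction step'' you flag as delicate cannot be carried out in general. Take $L$ to be the complete chain $[0,1]$. This is a spatial frame (its completely prime filters are the intervals $(a,1]$ with $a<1$), but it has no covered primes whatsoever, since the order is dense: $\bwe\{x\in L: p<x\}=p$ for every prime $p<1$, so the exact meet of Lemma \ref{bigmeet} collapses to $p$ itself and witnesses non-coveredness rather than helping. Now take $S=L$, the top sublocale: it is spatial, and it is trivially a D-sublocale (the identity is a D-morphism), yet it is not D-spatial, since the only element expressible as a meet of covered primes is $1$. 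In particular, given $a<b$ in $S$ there is no covered prime of $S$ separating $b$ from $a$, so no manipulation of the presentation $p=\bwe\{z\in S: p<z\}$ can produce the covered witness your argument requires; the ``covered/non-covered dictionary'' is vacuous here because one side of it is empty. The corollary itself is not contradicted by this example (the identity surjection is exact), but your route to it --- establish D-spatiality of $S$, then invoke Lemma \ref{l:tdimpliesexact} --- breaks down at its main step. Any repair must argue exactness of $\sigma_S$ by some other means, for instance directly through the criterion of Proposition \ref{p:charexactS}, and must handle the non-covered point-primes of $S$ without attempting to replace them by covered ones.
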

\begin{proof}
   This follows from Lemma \ref{l:tdimpliesexact}.
\end{proof}

 \printbibliography
\end{document}